\theoremstyle{plain}
\newtheorem{theo}{Theorem}[section]
\newtheorem{prop}[theo]{Proposition}
\newtheorem{lemm}[theo]{Lemma}
\newtheorem{coro}[theo]{Corollary}
\newtheorem{defi}[theo]{Definition}
\theoremstyle{remark}
\newtheorem{remark}{Remark}[section]
\theoremstyle{definition}
\newtheorem{example}{Example}[section]
\newcommand{\diag}{\mathop{\mathrm{diag}}}
\newcommand{\Id}{\mathrm{Id}}
\begin{document}

\title{Invariant manifolds for analytic difference equations}

\author{ Rafael~de~la~Llave$^\dag$\
 and H\'{e}ctor E.~Lomel\'{\i}$^\ddag$\ \\
 \smallskip\\
\dag School of Mathematics\\
Georgia Institute of Technology\\
Atlanta, GA 30332-0160 USA\\
 {\tt rafael.delallave@math.gatech.edu}
\smallskip\\
\ddag Department of Mathematics \\
 Instituto Tecnol\'{o}gico Aut\'{o}nomo de M\'{e}xico\\
	 Mexico, DF 01000 \\
	 {\tt lomeli@itam.mx }}

\date{\today}

\maketitle

\begin{abstract}

We use a modification of the \emph{parameterization method} to
study invariant manifolds for difference
equations. We establish existence, regularity,
smooth dependence on parameters and study several singular limits, 
even if the difference equations do not define
a dynamical system.
This method also leads to efficient algorithms that we present with their
implementations.
The manifolds we consider include 
not only the classical
strong stable and unstable manifolds but also manifolds
associated to non-resonant spaces.

When the difference equations are the Euler-Lagrange equations 
of a discrete variational we present 
sharper results.  Note that, if the Legendre condition
fails, the Euler-Lagrange equations can not be treated as a dynamical 
system. If the Legendre condition becomes singular, the dynamical 
system may be singular while the difference equation remains regular. 
We present numerical applications to  several examples in the physics
literature: the Frenkel-Kontorova model with long-range
interactions and the Heisenberg model of spin chains with a perturbation.
We also present extensions to finite differentiable difference equations. 

\end{abstract}


\section{Introduction}
\subsection{Difference equations}
In this paper we generalize some notions
that have played an important role in
dynamics, namely invariant manifolds, to the more general context of 
implicit difference equations. We also present algorithms to compute 
this object and apply them to several problems in the physics literature. 

Consider a smooth manifold $M$ of dimension $d$.
Let $Z:M^{N+1}\to \mathbb{R}^d$ be an analytic function.
The function $Z$ represents the following difference
equation of order $N$
\begin{equation}\label{eq:del}
Z\left(\theta_{k},\theta_{k+1},\ldots,\theta_{k+N}\right)=0.
\end{equation}
The solutions of the difference equation \eqref{eq:del}
are sequences $(\theta_{k})_{k \geq0}$
which satisfy
\eqref{eq:del}, for all values of $k\geq0$.
A recurrence
\begin{equation}
\label{eq:recurr}
\theta_{k+N} = F\left(\theta_{k},\ldots, \theta_{k+N -1} \right)
\end{equation}
is a particular case of \eqref{eq:del} taking
$Z\left(\theta_{k},\theta_{k+1},\ldots,\theta_{k+N}\right) =
F \left(\theta_{k},\ldots, \theta_{k+N-1} \right) - \theta_{k+N}$.
In some situations, we can consider \eqref{eq:del} as defining
$\theta_{k +N} $ in terms of the other variables, so that
one can transform \eqref{eq:del} into a recurrence of
the form \eqref{eq:recurr}. However, there are instances in which
it is impossible to do so and therefore problem \eqref{eq:del} is
more general than \eqref{eq:recurr}.
Recurrences have been studied mainly as dynamical
systems and they have a rich geometric study.

The goal of this paper is to show that some
familiar constructions in the theory of dynamical
systems have rather satisfactory generalizations
in the context of implicit difference equations.
We will use the so-called parameterization method
\cite{Cabre2003i,Cabre2003ii,Cabre2005iii}
to show existence, regularity and smooth dependence
on parameters of these invariant manifolds.
Note that other methods such as the graph transform, which depend
very much on the dynamical formulation do not seem to generalize 
to the context of implicit difference equations. 

The Legendre transform --if it exists-- makes a difference equation into a dynamical system.
In some situations, a family of Legendre transforms exist but
becomes singular. We study some of these singular limits, in which some invariant manifolds
continue to exist through the singularity of this Legendre transformation
and the implicit equation remains smooth, even if the dynamical system does not.

We will also describe and implement some rather efficient numerical
algorithms to compute these objects with high accuracy.
We point out that some of the
algorithms are novel, even in the dynamical system case
since we study not only the classical stable (and strong stable)
manifolds but also the weak stable manifolds as well as singular
limits. We believe that this is interesting because it shows a
new way of approximating stable and unstable manifolds. In order to
show that the method is robust we perform some explicit calculations that
can be found in the code that supplements this paper.

\begin{remark}
We note that the relation
between difference equations in implicit form
\eqref{eq:del} and the explicit recurrence is
similar to the relation of Differential-Algebraic
Equations (DAE) of the form $Z\left(y, y', \ldots, y^{(n)} \right) = 0 $
and explicit Differential equations
$y^{(n)} = F\left(y,\ldots, y^{(n-1)} \right)$.
It seems that the methods presented here can be extended to DAE,
but one needs extra technicalities. We hope to come back to
this question. For more information on DAE, see \cite{kunkel06}.
\end{remark}

\subsection{Some examples and applications}

\subsubsection{Variational principles}
One important source of the problems of the form
\eqref{eq:del}, to which
we pay special attention, is discrete variational
problems (which appear in physics, economics, dynamic programming, etc.).
A mathematical review of these discrete variational problems
can be found in \cite{Veselov91, Gole01}.

Let $S:M^{N+1}\to\mathbb{R}$ be analytic. When studying variational problems
one is interested --among other things--
in solutions to Euler-Lagrange equations $Z\left(\theta_{k},\ldots,\theta_{k+2N}\right)=0$, with
\begin{equation}\label{eq:Euler-Lagrange}
Z(\theta_0,\ldots,\theta_{2N})\equiv \sum_{j=0}^N\partial_jS\left(\theta_{N-j},\theta_{N-j+1},\ldots,\theta_{2N-j}\right)
=0,
\end{equation}
which are of the form \eqref{eq:del} and of order $2N$.
The Euler-Lagrange equations appear when one finds
critical points of the formal variational principle
based on
$
\mathcal{J}(\theta) \equiv \sum_{k} S\left(\theta_{k},\theta_{k+1},\ldots,\theta_{k+N}\right)
$. Indeed, \eqref{eq:Euler-Lagrange} is formally just
$\frac{\partial}{\partial \theta_{k}} \mathcal{J}(\theta)=0$,
for all $k$.

As we mentioned, there are
well known conditions (Legendre conditions) which
allow to transform the Euler-Lagrange equations
into recurrences, often called discrete Hamilton equations
or twist maps.
In situations where Legendre conditions fail --or become
singular-- the Euler-Lagrange equations cannot be transformed
into Hamiltonian equations. See the examples below
and those treated in more detail in Section \ref{sec:numerics}.

Even if we will not explore it in detail, we
note that the invariant manifolds considered here
have applications to dynamic programming. We plan to come
back to these issues in future investigations.

\subsubsection{The Heisenberg XY model of magnetism}
There are many examples of Lagrangian systems that can not be transformed in
dynamical systems.
Consider, for instance, the Heisenberg XY model. In this model,
one is interested in solutions of
\begin{equation}\label{eq:xy}
\sin (\theta_{k+1}-\theta_{k} ) + \sin (\theta_{k-1} - \theta_{k})
- \varepsilon\sin \theta_{k} =0,
\end{equation}
where each $\theta_{k}$ is an angle that represents the spin state
of a particle in position $k\in\mathbb{Z}$.
The parameter $\varepsilon$ corresponds to the strength of an
external magnetic field.
The equation \eqref{eq:xy} appears as the Euler-Lagrange equation of
an energy functional
$\mathcal{J}(\theta) = \sum_{k} \cos(\theta_{k+1} - \theta_{k}) + \varepsilon
\cos(\theta_{k})$ and has Lagrangian function
$S(\theta_0,\theta_1)=\cos(\theta_{1} - \theta_{0}) + \varepsilon
\cos(\theta_{0})$.

The dynamical interpretation of
\eqref{eq:xy} is problematic because in order to
get $\theta_{k+1}$ in terms of $\theta_{k}$ and $\theta_{k-1}$,
we need to have
\[
\left|\sin(\theta_{k}-\theta_{k-1})+\varepsilon\sin \theta_{k}\right|<1.
\]
This condition is not invariant under the dynamics
and having it for one value of $k$ does not guarantee to have it for others.
However, $\theta_{k}\equiv0$ is a solution of
\eqref{eq:xy}.
In fact, there could exist many solutions,
 defined for $k\geq0$, that converge to the fixed point. So, it is interesting
and useful to identify these solutions and understand their geometry.
We will present algorithms illustrating our general results in
this model in Section~\ref{XYmodel}.

\subsubsection{The Frenkel-Kontorova model
with non-nearest interaction}
\label{sec:efk}
The Frenkel-Kontorova model was introduced to describe dislocations
\cite{Fre-Kon-39},
but it has also found interest in the description of deposition
\cite{Aubry83,Bra-Kiv-04}.
One simplified version of the model (more general
versions will be discussed in Section~\ref{FKgeneral})
leads to the
study of solutions of
non-nearest interactions.
For instance, we have
\begin{equation} \label{simpleFK}
\varepsilon\left(\theta_{k+2} + \theta_{k -2} - 2 \theta_{k} \right)
+ \left(\theta_{k+1} + \theta_{k -1} - 2 \theta_{k}\right) + V'(\theta_{k}) = 0.
\end{equation}

Note that for $\varepsilon = 0$, equation \eqref{simpleFK} can be transformed
into a second order recurrence; i.e. a dynamical system in
$2$ dimensions. Indeed, this $2$-D map is the famous \emph{standard map},
\cite{Mather93,Gole01}, whereas for $\varepsilon \ne 0$ -- no matter how
small -- one is lead to a dynamical system in $4$ dimensions.

It turns out that some terms in this $4-$dimensional system
blow up as $\varepsilon\to0$.
Hence, the perturbation introduced by the $\varepsilon$
term is singular in the Hamiltonian formalism. Nevertheless, we observe that the
singularity appears only when we try to get $\theta_{k+2}$ as
a function of $\theta_{k+1}, \theta_{k}, \theta_{k-1}, \theta_{k-2}$.
The equations
\eqref{simpleFK} themselves depend smoothly on parameters.

Indeed, in section~\ref{sec:singular}, we will show that the invariant
manifolds of \eqref{simpleFK} that we construct, are smooth across $\varepsilon = 0$.
We note that in the applications to solid state
physics, the regime of
small $\varepsilon$ is the physically relevant, one
expects that there are interactions with even longer range
which become smaller with the distance \cite{CardaliguetLFM07,Suzuki}.

\subsubsection{Dependence of parameters of invariant manifolds and Melnikov theory}

In many situations, the  solutions of a difference equation change
dramatically when parameters are introduced. 
In the case of dynamical systems 
the transverse intersection of the stable and unstable manifolds is 
associated with chaos, and gave rise to the famous
horseshoe construction of Smale. The Poincar\'{e}-Melnikov
method is a widely used technique for detecting such intersections
starting from a situation when the manifolds agree. 
One can assume that a system has
pair of saddles and a degenerate heteroclinic or saddle connection
between them. The classical Melnikov theory computes the rate at
which the distance between the manifolds changes with a
perturbation. 
In this context, it is important to understand dependence of parameters
in invariant objects that appear in a Lagrangian setting.
In particular, in the $XY$ and $XYZ$ models, dynamics is not longer useful and
a purely variational formalism is needed.
In this paper, we show that the variational theory is
robust and there is smooth dependence on parameters, so our theory
could lead to a variational formulation of Melnikov's theory.
 Previous work in this direction was done in \cite{Tabacman95,Lomeli97}
 in the context of twist maps.

\subsection{Organization of the paper}

The method of analysis is inspired by the
study in \cite{Cabre2003i,Cabre2003ii,Cabre2005iii}
and is organized as follows.
First, in Section~\ref{sec:linear} we will study the linearized
problem and generalize the notion of characteristic polynomial,
spectrum and spectral subspaces.
They give necessary conditions so that one can even consider
solving \eqref{eq:invariance}.

The Main Theorem~\ref{thm:main} is stated in Subsection~\ref{sec:state}.
After we make the choices of
invariant spaces in the linear approximation, we will
show that, provided that these
spaces satisfy some non-resonance conditions --automatically
satisfied by the (strong) stable spaces considered in the
classical literature--
the solutions of the linear problem lead to solutions
of the full problem.

The main tool is some
appropriate implicit function theorems in Banach spaces.
These implicit function theorems some times require that we consider
approximate solutions of order higher than the first.
In Appendix~\ref{sec:banach} we review the theory
of Banach spaces of analytic functions and describe
how it fits in our problem. In Subsection~\ref{sec:IFT} we
 prove the Main Theorem and in
\ref{higherorder}, we consider the systematic
computation of higher order approximations. Besides
being useful in the proof of theorems,
the higher order approximations are the basis
of efficient numerical algorithms
presented in Section~\ref{sec:numerics}. In particular,
in Example~\ref{exa:xxx}, we show that the method can be used
to approximate slow manifolds, even in the presence of
a singular limit.

Some more refined
analysis also leads to the study of singular limits in
Section~\ref{sec:singular}.
Since the main tool is the implicit function theorem, we
can obtain easily smooth dependence on parameters
(see Subsection~\ref{sec:parameters}).

\section{General Setup}\label{sec:setup}
\subsection{Parameterized solutions}
We are interested in extending
the theory of invariant manifolds
associated to a hyperbolic fixed point to
the more general context of difference equations.
The extension of fixed point is clear.

\begin{defi}
If $\theta^*\in M$ satisfies $Z(\theta^*,\ldots,\theta^*)=0$, then 
we will say that $\theta_{k}\equiv \theta^*$ is a fixed point solution.
\end{defi}

The key to the generalization of the invariant manifolds from
dynamical systems to difference equations is to observe that the invariant
manifolds of a dynamical systems
are just manifolds of orbits. This formulation makes sense
in the context of difference equations. Note also that the dynamics
restricted to the invariant manifold is semi-conjugate to
the dynamics in the whole manifold (in the language of
ergodic theory, it is a \emph{factor}).
Hence, we define:

\begin{defi}[Parameterized stable solution]\label{prob:SMP}
Let $\theta^*$ be a fixed point solution to the difference equation \eqref{eq:del}.
Let $\mathcal{D}$ be an
 open disk of $\mathbb{R}^m$ around the origin. We will say that
 a smooth function $P:\mathcal{D}\to M$ is a stable parameterization
 of dimension $m$ with internal dynamics
 $h:\mathcal{D}\to h\left(\mathcal{D}\right)$
when:
\begin{enumerate}
\item $P(0)=\theta^*$.
\item $0$ is an attracting fixed point of $h$.
\item If $z\in \mathcal{D}$ then
\begin{equation}\label{eq:defpar}
Z\left(P\left(h^{k}(z)\right),P\left(h^{k+1}(z)\right),
\ldots,P\left(h^{k+N}(z)\right)\right)=0,
\end{equation}
for all $k\geq0$.
\end{enumerate}
\end{defi}

\begin{remark}
Notice that, in the definition above, if we let $z_0\in\mathcal{D}$ 
and $\theta_{k}=P(h^k(z_0))$, then
the sequence $(\theta_{k})_{k\geq0}$
satisfies the difference equation \eqref{eq:del} and also
 $\theta_{k}\to\theta^*$, as $k\to\infty$.
\end{remark}

In some situations it might be useful to consider a geometric
object associated to the parameterization. As
a consequence of definition \ref{prob:SMP}, we have the following result.

\begin{prop}
Let $\theta^*$ be a fixed point solution and $P:\mathcal{D}\subset\mathbb{R}^m\to M$
a parameterization of dimension $m$ with internal dynamics 
$h:\mathcal{D}\to h\left(\mathcal{D}\right)$. If $P'(0)$ has rank equal to the dimension of
$\mathcal{D}$, then
there exists an open disk
$\widehat{\mathcal{D}}\subset\mathcal{D}$ around the origin such that
 \begin{equation}\label{LSM}
 \mathcal{W}=\{\left(P\left(z\right),P\left(h(z)\right),
\ldots,P\left(h^{N-1}(z)\right)\right):z\in \widehat{\mathcal{D}} \}
\end{equation}
 is an embedded submanifold $\mathcal{W}\hookrightarrow M^N$ of dimension $m$.
 In such case, we will say that a manifold $\mathcal{W}$ as in \eqref{LSM} is a
 local stable manifold with parameterization $P$.
\end{prop}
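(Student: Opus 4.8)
The plan is to show that the map $z \mapsto (P(z), P(h(z)), \ldots, P(h^{N-1}(z)))$ is an immersion at the origin, hence an embedding on a sufficiently small neighborhood, and then to verify that the invariance equation \eqref{eq:del} is satisfied on the image by construction. The statement is essentially a packaging result: the real content is already in Definition~\ref{prob:SMP}, and the proposition just records the geometric object attached to a parameterization whose derivative at $0$ is injective.

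First I would define $\Phi : \mathcal{D} \to M^N$ by $\Phi(z) = (P(z), P(h(z)), \ldots, P(h^{N-1}(z)))$. Computing the derivative at the origin and using $h(0) = 0$, one gets $\Phi'(0) = (P'(0), P'(0) h'(0), \ldots, P'(0) h'(0)^{N-1})$ as a linear map from $\mathbb{R}^m$ into $(\mathbb{R}^d)^N$; here I am working in local coordinates on $M$ near $\theta^*$. The first block is already $P'(0)$, which by hypothesis has rank $m$, so $\Phi'(0)$ is injective regardless of the behavior of $h'(0)$. Therefore $\Phi$ is an immersion at $0$, and by the standard local immersion theorem (the rank theorem / constant rank theorem) there is an open disk $\widehat{\mathcal{D}} \subset \mathcal{D}$ around the origin on which $\Phi$ is a smooth embedding, so that $\mathcal{W} = \Phi(\widehat{\mathcal{D}})$ is an embedded submanifold of $M^N$ of dimension $m$. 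One should note that shrinking $\widehat{\mathcal{D}}$ is harmless: since $0$ is an attracting fixed point of $h$, we have $h(\widehat{\mathcal{D}}) \subset \mathcal{D}$ for $\widehat{\mathcal{D}}$ small, so all the iterates $h^j(z)$ appearing in $\Phi$ and in the invariance check below remain in $\mathcal{D}$ where $P$ is defined.

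Next I would check that every point of $\mathcal{W}$, viewed as an element of $M^N$, together with its forward iterates, satisfies \eqref{eq:del}; more precisely, that $\mathcal{W}$ is exactly the set of length-$N$ windows of orbits lying on the stable parameterization. Given $z \in \widehat{\mathcal{D}}$, set $\theta_k = P(h^k(z))$ for $k \geq 0$. By property (c) of Definition~\ref{prob:SMP} (equation \eqref{eq:defpar}, taking $k = 0$ there and replacing $z$ by $h^k(z)$ for general $k$), the sequence $(\theta_k)_{k\ge 0}$ solves \eqref{eq:del}, and the window $(\theta_0, \ldots, \theta_{N-1})$ is precisely the point $\Phi(z) \in \mathcal{W}$. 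This justifies calling $\mathcal{W}$ a local stable manifold: it consists of initial $N$-windows of solutions that converge to $\theta^*$, and it carries the semiconjugacy to $h$. I would close by remarking that the terminology ``with parameterization $P$'' is consistent because $\Phi$ restricted to $\widehat{\mathcal{D}}$ is a parameterization of $\mathcal{W}$.

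The only genuine obstacle is purely soft: making sure the immersion $\Phi'(0)$ being injective really does follow from $\operatorname{rank} P'(0) = m$ without any hypothesis relating $h'(0)$ to $P'(0)$, and that the local immersion theorem applies on a manifold target (it does, working in charts around $\theta^*$ and its image in $M^N$). Everything else — the invariance on the image, the fact that $h(\widehat{\mathcal{D}}) \subseteq \mathcal{D}$ after shrinking — is immediate from the hypotheses of Definition~\ref{prob:SMP}. So I expect the proof to be short, with the one point worth spelling out being that the first coordinate block of $\Phi'(0)$ already supplies full rank.
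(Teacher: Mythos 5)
Your proof is correct. The paper offers no explicit argument (the proposition is stated as an immediate consequence of Definition~\ref{prob:SMP}), and your route --- observing that the first block of the derivative of $z\mapsto\bigl(P(z),P(h(z)),\ldots,P(h^{N-1}(z))\bigr)$ at $0$ is $P'(0)$, which already has full rank $m$, and then invoking the local immersion theorem --- is exactly the intended one, with the remark about shrinking $\widehat{\mathcal{D}}$ so that the iterates of $h$ stay in $\mathcal{D}$ being a worthwhile detail to record.
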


We note that the parameterization $P$ and the internal dynamics
$h$ satisfy
\begin{equation}\label{eq:invariance}
\left[ \Phi(P) \right](z) :=
Z\left( P(z), P(h(z)), \ldots, P( h^N(z)) \right) = 0,
\end{equation}
together with the normalizations obtained by
a change of origin
\begin{equation} \label{eq:normalization}
P(0) = \theta^*, \quad h(0) = 0.
\end{equation}

The equation \eqref{eq:invariance} is the centerpiece of
our analysis. Using methods of
functional analysis we can show that, under appropriate
conditions, \eqref{eq:invariance}
has solutions. In order to do this,
we will find it convenient to think of
\eqref{eq:invariance} as the equation $\Phi(P) = 0$,
defined on a suitable function space.

The solutions of \eqref{eq:invariance}
 thus produced generalize the familiar
stable and strong stable manifolds but also include some
other invariant manifolds associated to non-resonant spaces
including, in some cases, the \emph{slow manifolds}.

\subsection{An important simplification}
\label{sec:simplification}

If $L : \tilde{\mathcal{D}}\to\mathcal{D}$
is a diffeomorphism and
we define $\tilde{P}=P\circ L$ and $\tilde{h}=L^{-1}\circ h\circ L$, then
$(\tilde P,\tilde h)$ solves \eqref{eq:invariance}
on the domain $\tilde{\mathcal{D}}$.
In other words, the $P$ and $h$ solving \eqref{eq:invariance} is not
unique. Nevertheless, the range of $P$ and $\tilde{P}$ is unique.

One can take advantage of this lack of uniqueness of
\eqref{eq:invariance} to impose extra normalization on the maps
$h$. Recall that, by the theorem of Sternberg \cite{Ste55,Ste58,Ste59},
under non-resonance conditions
on the spectrum,\footnote{This is automatically satisfied for the strong stable
and strong unstable manifolds and includes the classical stable and unstable manifolds.} 
any analytic one dimensional
invariant curve tangent to a contracting eigenvector has dynamics
which is analytically conjugate to the linear one. So that,
we can take $h$ to be linear if we assume non-resonance or
if we deal with $1$-dimensional
manifolds.

In this paper, we will not consider the resonant case. This justifies
that in \eqref{eq:invariance} we do not consider $h$ as part of
the unknowns, since it will be linear.
In the cases considered here, it will suffice
to consider $h$ to be linear, we will write $h(z) = \Lambda z $.
Hence, instead of \eqref{eq:invariance}, we will consider
\begin{equation}\label{eq:invariancelinear}
\left[ \Phi(P) \right](z) :=
Z\left( P(z), P(\Lambda z), \ldots, P( \Lambda^N z ) \right) = 0,
\end{equation}
where $\Lambda$ is a matrix that will be determined from
a linearized equation near the fixed point. In fact, $\Lambda$ is
a matrix that depends on the roots of a polynomial.
The determination of $\Lambda$ and $P'(0)$ will be studied in Section~\ref{sec:linear}.
We will call $\Phi$ the parameterization operator.
The domain of the operator $\Phi$ is
a function space that depends on the analytic
properties of $Z$ and will be studied in detail in Section~\ref{sec:ana}.

In summary, the basic \emph{ansatz} that we propose is that there are solutions
to the difference equation that are of the form
\[
\theta_k=P\left(\Lambda^kz\right),
\]
where $\Lambda$ is determined by the linearization at the fixed point. In
addition, $\Lambda$ and $P$ are chosen so that $\theta_k\to\theta^*$ as $k\to\infty$.
The smoothness of $P$ depends on the smoothness of $Z$.

\subsection{Unstable manifolds}
With the parameterization method we can also study unstable manifolds.

\begin{defi}[Parameterized unstable solution]\label{prob:UMP}
Let $\theta^*$ be a fixed point solution to the difference equation \eqref{eq:del}.
Let $\mathcal{D}$ be an
 open disk of $\mathbb{R}^m$ around the origin. We will say that
 a smooth function $P:\mathcal{D}\to M$ is a unstable parameterization
 of dimension $m$ with internal dynamics
 $h:\mathcal{D}\to h\left(\mathcal{D}\right)$
when:
\begin{enumerate}
\item $P(0)=\theta^*$.
\item $0$ is an attracting fixed point of $h$.
\item If $z\in \mathcal{D}$ then
\begin{equation}\label{eq:defpar2}
Z\left(P\left(h^{k+N}(z)\right),P\left(h^{k+N-1}(z)\right),
\ldots,P\left(h^{k}(z)\right)\right)=0,
\end{equation}
for all $k\geq0$.
\end{enumerate}
\end{defi}
\begin{remark}
As in the stable case, each parameterization produces
sequences that satisfy the original difference equation. In this
case, if we let $z_0\in\mathcal{D}$ and $\theta_{-k}=P(h^k(z_0))$, then
the sequence $(\theta_{k})_{k\leq 0}$
satisfies the difference equation \eqref{eq:del} and also
 $\theta_{k}\to\theta^*$, as $k\to-\infty$.
\end{remark}
This corresponds to an extension of the original difference
$
Z\left(\theta_{k},\ldots,\theta_{k+N}\right)=0,
$
to negative values $k\leq0$. Alternatively, we can write the
difference equation for negative values in terms of
a dual problem
\[
\tilde{Z}\left(\tilde{\theta}_{k},\tilde{\theta}_{k+1},\ldots,\tilde{\theta}_{k+N}\right)=
Z\left(\tilde{\theta}_{k+N},\tilde{\theta}_{k+N-1},\ldots,\tilde{\theta}_{k}\right)=0.
\]
We interpret the new variable as $\tilde{\theta}_{k}=\theta_{N-k}$.
In this way, the unstable case is reduced to the stable case.

\subsection{Explicit examples}

\begin{example}
 Let $\eta>0$. Consider the function $Z:\mathbb{R}^2\to \mathbb{R}$ given by
\[
Z(\theta_0,\theta_1,\theta_2)=\theta_{2}+\theta_{0}-\frac{2 \cosh (\eta )\theta_1 }{\theta_1^2+1}.
\]
The resulting difference equation
$Z(\theta_{k},\theta_{k+1},\theta_{k+2})=0$
\cite{McMillan71} is an
explicit recurrence called the McMillan map. It is \emph{integrable}
in the sense that
$
 J(x,y)=x^2 y^2 +x^2+y^2-2 \cosh (\eta ) xy
$
satisfies $J(\theta_{k},\theta_{k+1})=J(\theta_{k+1},\theta_{k+2})$.
In \cite{DR98}, it was shown that
$
P(z)= 2 \sinh (\eta )z/(z^2+1)
 $
satisfies
\[
Z(P(z),P(\lambda z),P(\lambda^2 z))=P(\lambda^2 z)+P(z)-\frac{2 \cosh (\eta )P(\lambda z) }{[P(\lambda z)]^2+1}\equiv 0,
\]
provided $\lambda=e^{-\eta}$. Therefore $P$
is a parameterized solution with internal dynamics $h(z) = \lambda z$.
Notice that, in the case of explicit recurrences,
 the parameterization
can follow the manifold even if it folds and ceases to be a graph.
\end{example}

\begin{example}
Let $M=(-\infty,1)$. On $M\times M$, we define the
following the Lagrangian
$S(\theta_0,\theta_1)=\frac12\left(\theta_0-f(\theta_1)\right)^2$,
where $f(\theta)=2\theta/(1-\theta)$. The corresponding
Euler-Lagrange difference equation can be written as $Z(\theta_{k},\theta_{k+1},\theta_{k+2})=0$ where
$Z:M^3\to\mathbb{R}$ is the function given by
\begin{equation}\label{eq:exh}
 Z(\theta_0,\theta_1,\theta_2)=\left(f(\theta_1)-\theta_0\right)f'(\theta_1)+\left(\theta_1-f(\theta_2)\right).
\end{equation}
Clearly, $\theta^*=0$ is a fixed point solution.

Let $P(z)=z/(1-z)$. Since $P(z)\in M$, the maximal disk in which $P$ can be
defined is $\mathcal{D}=(-1/2,1/2)$. If we let $\lambda=\frac12$, then
one can check that $P$ satisfies $f(P(\lambda z))=P(z)$, for all $z\in \mathcal{D}$. 
After substitution, we verify that
$Z(P(z),P(\lambda z),P(\lambda^2 z))\equiv 0$. Therefore, $P$ is a stable parameterized solution for
$\theta^*=0$.

However, the difference equation \eqref{eq:exh} is not a dynamical system because it
can not be inverted, unless $f$ is a diffeomorphism of $M$.
In other words, we can find a parameterization
solution even if the system is not dynamic.

\end{example}

\begin{example}
 Suppose that $G:\mathbb{R}^d\to \mathbb{R}^d$ is a map, possible non-invertible,
 with $G(0)=0$. We would like to parameterize the stable and unstable manifolds
of $0$, if they exist. For instance, we can solve the following pair of one-dimensional problems.
 \begin{itemize}
\item Stable manifold problem: if $0<\lambda<1$ is an eigenvalue of
 $DG(0)$, find a function $P$ such that
 \(
 P(\lambda z)=G(P(z)).
 \)
\item Unstable manifold problem: if $\mu>1$ is an eigenvalue of
 $DG(0)$, find a function $P$ such that
 \(
 P(\mu z)=G(P(z)).
 \)
 \end{itemize}
 Both problems can be solved using the parameterization method for
 difference equations. The same ideas can be used for higher dimensional
 objects. For examples, for $d=3$, two-dimensional stable and unstable manifolds
 were found in \cite{james2010}.
 As we will see, our analysis covers not only one-dimensional
 stable and unstable manifolds,
 but also other non-resonant manifolds.
\end{example}

\section{Linear analysis}\label{sec:linear}

Since we are interested in solutions of \eqref{eq:invariancelinear}
it is natural to study the behavior of the linearization.
As it happens in other situations, this will lead to certain
choices. In subsequent sections, we will show that the
once these choices are made (satisfying some mild conditions),
then there is indeed a manifold which agrees with these
constraints.

Suppose that there exist differentiable $P$, and a matrix $\Lambda$ solving \eqref{eq:invariancelinear}.
Taking derivatives of
\eqref{eq:invariance} with respect to $z$, and evaluating
at $z=0$, we get that
 \begin{equation}\label{eq:linearrelation}
 \sum_{i=0}^NB_iV\Lambda^{i}=0,
 \end{equation}
where $B_i= \partial_{i}Z\left(\theta^*,\ldots,\theta^*\right)$ and $V=P'(0)$.
Our first goal is to understand conditions that allow
to solve equation \eqref{eq:linearrelation} which is a necessary condition
for the existence of differentiable solutions for
\eqref{eq:invariancelinear}.

\begin{remark}
The dimensions of the matrices are determined by the type of parametrization that we have.
Notice that each $B_{i}$ is $d\times d$,
$\Lambda$ is $m\times m$ and $V$ is $d\times m$.
\end{remark}

\begin{defi}[Characteristic polynomial]\label{def:charac}
Let $\theta^*$ a fixed point solution. If we let
\[
B_i= \partial_{i}Z\left(\theta^*,\ldots,\theta^*\right),
\]
then the characteristic polynomial of the fixed point is defined to be:
\[ \mathcal{F}(\lambda):=\det\left( \sum_{i=0}^N\lambda^{i}B_{i}\right). \]
If $\lambda$ is a root of $\mathcal{F}$, then we will say that is it is an eigenvalue. 
The set of eigenvalues is the spectrum, denoted by $\sigma\left(\theta^* \right)$, 
of the fixed point. If $\lambda\in \sigma\left(\theta^* \right)$, then any vector 
$v\in\mathbb{C}^d\setminus\{0\}$ that satisfies \[ \sum_{i=0}^N\lambda^{i}B_{i}v=0 \] 
will be called an eigenvector of $\lambda$.
In addition, if $V$ is a $d\times m$ matrix with non-zero columns and $\Lambda$ is a $m\times m$ matrix that
satisfy the linear relation \eqref{eq:linearrelation},
then we will say that the pair $(V,\Lambda)$ is an eigensolution
of dimension $m$.
\end{defi}

\begin{remark}\label{rm:util}
If the difference equation is of the form
\eqref{eq:Euler-Lagrange} and is the Euler-Lagrange
of a variational principle then the
 characteristic polynomial is of the form
 $\mathcal{F}(\lambda)=\lambda^{Nd}\mathcal{L}(\lambda)$ where
 $\mathcal{L}$ is given by
\begin{equation}\label{eq:characlagr}
 \mathcal{L}(\lambda):=\det\left( \sum_{i,j=0}^N\lambda^{j-i}A_{ij}\right),
\end{equation}
and $A_{ij} =\partial_{ij}S\left(\theta^*,\ldots,\theta^*\right)$.
Now, since $A_{ij}^T=A_{ji}$, we have that if $\lambda$ is in the spectrum, then $1/\lambda $
is also in the spectrum. This is a well known symmetry for the spectrum of
symplectic matrices. In \cite{Veselov91} one can find a proof that,
when the Euler-Lagrange equation \eqref{eq:Euler-Lagrange} defines a dynamical system, it
becomes symplectic.
\end{remark}

\begin{remark}
Let $(V,\Lambda)$ be an eigensolution as above.
Suppose that $w$ is an eigenvector of $\Lambda$ with eigenvalue $\lambda$.
Then $v=\Lambda w$ is an eigenvalue with eigenvalue $\lambda$ in the sense of definition \ref{def:charac}.
\end{remark}

\begin{prop}
Let $\{v_1,\ldots,v_m\}$ be a linearly independent set of eigenvectors with distinct eigenvalues
$\lambda_1,\ldots,\lambda_m$.
Let $V$ be the $d\times m$ matrix
$V=\left(v_1\,v_2\,\cdots\, v_m \right) $ and
$\Lambda$ be the $m\times m$ matrix
$\Lambda=\diag(\lambda_1,\ldots,\lambda_m)$.
Then $(V,\Lambda)$ is an eigensolution and
satisfies equation \eqref{eq:linearrelation}.
In addition, if $Q$ is an invertible matrix,
$\tilde V=VQ$ and $\tilde \Lambda=Q^{-1}\Lambda Q$ then
$(\tilde V,\tilde\Lambda)$ is also an eigensolution.
\end{prop}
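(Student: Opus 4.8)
The plan is to verify directly that the claimed pairs satisfy the linear relation \eqref{eq:linearrelation}, exploiting the fact that everything decouples column-by-column because $\Lambda$ is diagonal. First I would take the set $\{v_1,\dots,v_m\}$ of eigenvectors with distinct eigenvalues $\lambda_1,\dots,\lambda_m$ and form $V = (v_1\,v_2\,\cdots\,v_m)$ and $\Lambda=\diag(\lambda_1,\dots,\lambda_m)$. The key observation is that $V\Lambda^i$ is the $d\times m$ matrix whose $r$-th column is $\lambda_r^{\,i} v_r$. Therefore the $r$-th column of $\sum_{i=0}^N B_i V\Lambda^i$ equals $\sum_{i=0}^N \lambda_r^{\,i} B_i v_r$, which is zero precisely because $v_r$ is an eigenvector of $\lambda_r$ in the sense of Definition~\ref{def:charac}. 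Since every column vanishes, $\sum_{i=0}^N B_i V\Lambda^i = 0$, so \eqref{eq:linearrelation} holds; moreover each column $v_r$ of $V$ is nonzero, so $(V,\Lambda)$ is an eigensolution of dimension $m$ by definition.

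For the second assertion, suppose $Q$ is invertible and set $\tilde V = VQ$, $\tilde\Lambda = Q^{-1}\Lambda Q$. Then $\tilde\Lambda^i = Q^{-1}\Lambda^i Q$, so
\[
\sum_{i=0}^N B_i \tilde V \tilde\Lambda^i
= \sum_{i=0}^N B_i V Q \, Q^{-1}\Lambda^i Q
= \left(\sum_{i=0}^N B_i V \Lambda^i\right) Q = 0,
\]
using the relation just established. It remains only to note that the columns of $\tilde V = VQ$ are nonzero: since $V$ has rank $m$ (its columns are linearly independent, being eigenvectors for distinct eigenvalues) and $Q$ is invertible, $\tilde V$ also has rank $m$, hence no zero columns. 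Thus $(\tilde V,\tilde\Lambda)$ is again an eigensolution.

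There is really no serious obstacle here; the statement is a bookkeeping consequence of the diagonal structure of $\Lambda$ and the conjugation-invariance of the relation \eqref{eq:linearrelation}. The only point requiring a word of care is the linear independence of $\{v_1,\dots,v_m\}$ — which is assumed in the hypothesis, though it also follows automatically from the eigenvectors having distinct eigenvalues — since this is what guarantees that the transformed matrix $\tilde V$ still has full column rank and hence qualifies as the $V$-part of an eigensolution.
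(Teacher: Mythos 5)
Your verification is correct and is exactly the computation the paper intends (the paper states this proposition without proof, as an immediate consequence of the diagonal structure of $\Lambda$ and Definition~\ref{def:charac}). One caution about your closing parenthetical: in this generalized eigenvalue problem linear independence does \emph{not} follow automatically from the eigenvalues being distinct — eigenvectors here are elements of $\ker T(\lambda_r)$ for the matrix polynomial $T(\lambda)=\sum_i \lambda^i B_i$, and distinct roots of $\det T(\lambda)$ can share the same kernel vector (already for $d=1$ every ``eigenvector'' is a nonzero scalar). This is precisely why the hypothesis is stated explicitly, and, as you correctly note elsewhere, it is genuinely needed for the second assertion, since without full column rank $VQ$ could acquire a zero column.
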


\begin{remark} Suppose that
$\lambda=\mu+i\,\nu$ is a root of the characteristic polynomial
$\mathcal{F}$ of a fixed point $\theta^*$. Let $v=r+i\,s$ be an
eigenvector of $\lambda$. If we want to avoid the use of complex numbers,
we can use $\lambda$ and $v$ in order to find a solution with dimension
$m=2$. Let $V$ be the $d\times 2$ matrix given by
$V=(r\ s)$ and
\[
\Lambda=\left(
 \begin{array}{cc}
 \mu & -\nu \\
 \nu & \mu
 \end{array}
 \right).
\]
It is easy to see that $(V,\Lambda)$ is an eigensolution of dimension 2.
\end{remark}

\begin{remark}\label{rem:aux}
We notice that $\mathcal{F}(\lambda)$ is always a polynomial of degree at most $2Nd$. If the matrix $B_{0}$ is non-singular then
the degree of $\mathcal{F}(\lambda)$ is exactly $Nd$. If $\lambda$ is an eigenvalue, 
then there is only a finite number of values of $n$ for which $\mathcal{F}(\lambda^n)= 0$. These considerations motivate the following.
\end{remark}

\begin{defi}[Non-singularity condition]\label{def:nsc}
 We will say that a fixed point $\theta^*$ is non-singular
 if the corresponding characteristic polynomial satisfies: $ \mathcal{F}(0)\neq 0$.
\end{defi}

\begin{defi}[Non-resonance condition]\label{def:nrc}
We will say that an eigenvalue
$\lambda\in \sigma\left(\theta^* \right)$ is
non-resonant if
$ \mathcal{F}(\lambda^n)\neq 0$,
for all $n\geq2$.
\end{defi}

More generally, we will consider non-resonant sets of eigenvalues.
In what follows, we will be using the multi-index notation
where, as usual, if $z=(z_1,\ldots,z_m)\in\mathbb{C}^m$
and $\alpha=(\alpha_1,\ldots,\alpha_m)\in\mathbb{Z}_+^m$
 is a multi-index then
 $z^\alpha=z_1^{\alpha_1}z_2^{\alpha_2}\cdots z_m^{\alpha_m}$.
\begin{defi}\label{non-resonant-set}
We will say that $\boldsymbol{\lambda}=(\lambda_1,\ldots,\lambda_m)$ is a
non-resonant vector of eigenvalues if, for all multi-indices $\alpha\in\mathbb{Z}_+^m$,
\begin{enumerate}
\item $\mathcal{F}(\boldsymbol{\lambda}^\alpha)=0$ if $ |\alpha|=1$,
\item $\mathcal{F}(\boldsymbol{\lambda}^\alpha)\neq0$ if $ |\alpha|>1$.
\end{enumerate}
If $\boldsymbol{\lambda}=(\lambda_1,\ldots,\lambda_m)$ is a
non-resonant vector of eigenvalues then we will also say that the
set $\{\lambda_1,\ldots,\lambda_m\}$ is non-resonant.
\end{defi}

\begin{defi}[Hyperbolicity]\label{defi:hyperbolic}
Let $\theta^*$ be a non-singular fixed point solution of an analytic
difference equation $Z$ and suppose that $\mathcal{F}$ is its characteristic polynomial.
 We will say that $\theta^*$ is
hyperbolic if none of the eigenvalues of $\mathcal{F}$
 are on the unit circle.
Similarly, we will say that a vector of eigenvalues
$\boldsymbol{\lambda}=(\lambda_1,\ldots,\lambda_m)$ is
\emph{stable} if $|\lambda_i | < 1$, for all
$i=1,\ldots,m$.
\end{defi}

\begin{remark}
Let $\theta^*$ be a non-singular fixed point solution and $\sigma(\theta^*)$ its
spectrum. In other words, all elements of $\sigma(\theta^*)$
are non-zero. Notice that, even if
the condition of non-resonance involves infinitely many
conditions, for stable sets all except a finite number of
them are automatic.
Suppose that $\boldsymbol{\lambda}=(\lambda_1,\ldots,\lambda_m)$
is a stable vector of eigenvalues.
If $n \in \mathbb{N}$ is
such that \[
\left( \max_{ \lambda_i \in \boldsymbol{\lambda} }| \lambda_i|\right)^n
\le \min_{\lambda \in \sigma(\theta^*)} |\lambda|,
\]
then we have that
$\boldsymbol{\lambda}^{\alpha} $
cannot be in the spectrum if $ |\alpha|\ge n$. So that there
are only a finite number of conditions to check and the set of
non-resonant $\boldsymbol{\lambda}$ is an open-dense, full measure
set among the stable ones.
\end{remark}

All this analysis shows that there are obstructions
to the computation of invariant manifolds that appear
using just the linear approximation.
These obstructions are a generalization
of the observation, in the dynamical systems case,
that the only invariant
manifolds have to have tangent spaces that are invariant under
the linearization.

The goal of the rest of the paper is to show that if we choose
a subset of the spectrum and an invariant subspace, which
also satisfies the non-resonance conditions, then there is
a solution to the parameterization problem. The analysis will also show
that the non-resonance conditions are needed to obtain a general
result.

\section{Existence and analyticity of solutions}\label{sec:ana}

As indicated before, we will first obtain a formal
approximation and then use an implicit function theorem.
The proof of Theorem~\ref{thm:main} is an application of the
 implicit function theorem in a Banach space of analytic functions
 (see Subsection~\ref{sec:IFT}).
This will lead rather quickly to the analytic 
 dependence on parameters, and the
possibility of getting approximate solutions (see Subsection~\ref{higherorder}).

\subsection{Statement of the Main Theorem}\label{sec:state}

\begin{theo}\label{thm:main}
Let $Z$ be an analytic difference equation function with
a fixed point at $0$ and
such that $B_0=\partial_{0}Z(0)$ is a non-singular matrix.
Let $\boldsymbol{\lambda}=(\lambda_1,\ldots,\lambda_m)$
be a stable non-resonant vector of eigenvalues, with corresponding eigenvectors $v_1,\ldots,v_m$.
Let $(V,\Lambda)$ be the corresponding eigensolution of the
linearized problem \eqref{eq:linearrelation}
with $\Lambda = \diag(\lambda_1, \cdots, \lambda_m)$
and $V=(v_1\cdots v_m)$.

Then, there exist an analytic function $P$ such that its
derivative at $z= 0$ is $P'(0)=V$ and satisfies
\eqref{eq:normalization} and \eqref{eq:invariancelinear}.
The solution is unique among the solutions of the equation.
\end{theo}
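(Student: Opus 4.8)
The plan is to solve $\Phi(P)=0$ by an implicit function theorem in a suitable Banach space of analytic functions, after first constructing a high-order formal approximate solution. First I would set up the functional-analytic framework: fix a polydisk $\mathcal{D}_\rho\subset\mathbb{C}^m$ and work in a space $\mathcal{A}_\rho$ of analytic functions $\mathcal{D}_\rho\to\mathbb{C}^d$ that are continuous up to the boundary, with the sup norm, as described in Appendix~\ref{sec:banach}; the composition operators $P\mapsto P\circ\Lambda^i$ act on this space because $\|\Lambda\|<1$ (the $\lambda_i$ are stable), so they map $\mathcal{D}_\rho$ into itself, and $\Phi$ is a well-defined analytic map between the relevant neighborhoods once we fix the normalization $P(0)=0$, $P'(0)=V$.

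Next I would build the formal solution order by order. Writing $P(z)=Vz+\sum_{|\alpha|\ge 2}P_\alpha z^\alpha$ and matching powers of $z$ in \eqref{eq:invariancelinear}, the order-one term reproduces the linear relation \eqref{eq:linearrelation}, which holds by hypothesis since $(V,\Lambda)$ is an eigensolution. At order $\alpha$ with $|\alpha|=n\ge 2$, the coefficient $P_\alpha$ must satisfy a linear equation of the form
\[
\left(\sum_{i=0}^N \boldsymbol{\lambda}^{i\alpha}\,B_i\right)P_\alpha = R_\alpha,
\]
where $R_\alpha$ depends only on the lower-order coefficients and on the derivatives of $Z$ at the fixed point. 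The matrix on the left is exactly $\sum_i (\boldsymbol{\lambda}^{\alpha})^i B_i$, whose determinant is $\mathcal{F}(\boldsymbol{\lambda}^{\alpha})$; by the non-resonance hypothesis this is nonzero for all $|\alpha|\ge 2$, so $P_\alpha$ is uniquely determined. This gives a unique formal power series solution with the prescribed $1$-jet, and truncating it at a sufficiently high order $\ell$ yields a polynomial $P_{\le\ell}$ with $\Phi(P_{\le\ell})(z)=O(|z|^{\ell+1})$; I would record this in the companion to Subsection~\ref{higherorder}.

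Then I would close the argument with the implicit function theorem. The Fréchet derivative $D\Phi(P_{\le\ell})$ acting on a perturbation $W$ (with $W(0)=0$, $W'(0)=0$) is, to leading order near the fixed point, the operator $W\mapsto \sum_i B_i\,(W\circ\Lambda^i)$ plus a small remainder; on the subspace of functions vanishing to second order at $0$ this operator is boundedly invertible because the non-resonance conditions make the "small-divisor"-type factors $\mathcal{F}(\boldsymbol{\lambda}^\alpha)$ bounded away from $0$ for $|\alpha|\ge 2$ — here I use that stability forces $\boldsymbol{\lambda}^\alpha\to 0$, so only finitely many $\alpha$ are close to the spectrum and the rest give factors close to $\mathcal{F}(0)=\det B_0\ne 0$. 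Applying the quantitative implicit function theorem to $\Phi$ based at $P_{\le\ell}$, whose residual is $O(|z|^{\ell+1})$ and hence small in $\mathcal{A}_\rho$ for $\rho$ small, produces the genuine analytic solution $P$ near $P_{\le\ell}$; uniqueness in the stated class follows from uniqueness in the IFT together with the uniqueness of the formal series. I expect the main obstacle to be the invertibility of $D\Phi$ on the right function space: one must show the inverse of $W\mapsto\sum_i B_i(W\circ\Lambda^i)$ is bounded on analytic functions (not merely formal series), controlling the growth of the coefficients $1/\mathcal{F}(\boldsymbol{\lambda}^\alpha)$ against the analytic norm, and absorb the non-constant-coefficient remainder terms — this is precisely where the detailed estimates of Appendix~\ref{sec:banach} and the choice of a small enough domain $\rho$ are needed.
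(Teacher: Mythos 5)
Your overall strategy -- formal power-series solution, then an implicit function theorem based on the bounded invertibility of $D\Phi(0)$ on functions with vanishing low-order jet -- is the same as the paper's, and the way you dispose of the coefficient equations via $\det T(\boldsymbol{\lambda}^\alpha)=\mathcal{F}(\boldsymbol{\lambda}^\alpha)\neq0$ and the observation that $T(\boldsymbol{\lambda}^\alpha)\to B_0$ for $|\alpha|$ large is exactly Lemma~\ref{lem:bonito}. Two differences are worth recording. First, the paper does not base the IFT at a high-order truncation $P^{\leq\ell}$: it writes $P=\tau\cdot V+P^>$ with $P^>\in H$ vanishing to order $2$, treats the scaling $\tau$ of the linear part as an explicit parameter, and applies the IFT to $\Psi(\tau,P^>)=\Phi(\tau\cdot V+P^>)$ at $(0,0)$, recovering $P'(0)=V$ afterwards by the linear change of variables of Section~\ref{sec:simplification} (at the cost of a smaller radius of convergence). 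This makes the higher-order expansion of Subsection~\ref{higherorder} purely a computational tool rather than a logical ingredient of the existence proof.

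Second, and this is the one genuine gap in your write-up: you work with the sup norm on a polydisk but invert $D\Phi(0)$ coefficient-by-coefficient on the subspace of functions vanishing to \emph{second} order, citing only the uniform bound on $\|T(\boldsymbol{\lambda}^\alpha)^{-1}\|$. A diagonal-in-Taylor-coefficients operator with uniformly bounded blocks need not be bounded in the sup norm, so uniform invertibility of the blocks does not by itself yield a bounded inverse of $D\Phi(0)$ in your space. The paper resolves this by choosing the $\ell^1$-type norm on Taylor coefficients, $\|f\|_\rho=\sum_k\bigl(\sum_{|\alpha|=k}\|\eta_\alpha\|_\infty\bigr)\rho^k$ (Appendix~\ref{sec:banach}), for which the coefficientwise estimate immediately gives $\|D\Phi(0)^{-1}\|\leq C$ on $H$. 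If you insist on the sup norm, you must instead argue via a Neumann series for $B_0(\Id+L)$ with $L\varphi=\sum_{j\geq1}B_0^{-1}B_j(\varphi\circ\Lambda^j)$, using a Schwarz-lemma bound $\|\varphi\circ\Lambda^j\|\leq\|\Lambda\|^{jr}\|\varphi\|$ on functions vanishing to order $r$; this forces $r$ large (not $r=2$) and then your high-order truncation becomes genuinely necessary -- which is precisely the structure of the paper's proof in the finite-differentiability case (Lemma~\ref{lem:smooth}), not of Theorem~\ref{thm:main}. Either fix is fine, but as stated the invertibility step does not close.
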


\begin{remark}
The radius of convergence of the resulting analytic function
is not specified. In the proof of \ref{thm:main}, we will
consider an equivalent result, in which the radius of
convergence is 1, but
the lengths of the vectors $v_1,\ldots,v_m$ are modified.
\end{remark}

\begin{remark}
 If $Z$ is the Euler-Lagrange equation \eqref{eq:Euler-Lagrange}
 that corresponds to a generating function $S$, then
 $B_0=\partial_{0,N}S(0)$ and hence
 the non-singularity condition is $\det(\partial_{0,N}S(0))\neq0$.
\end{remark}

In Section~\ref{sec:singular}, we will weaken the assumption
$\det(B_{0}) \ne 0$, which is tantamount to a local version of
the Legendre condition. This includes, in particular, the extended
Frenkel Kontorova models with singularities.

\subsection{Analyticity of the parameterization operator}
In appendix \ref{sec:banach}, we give a general definition
of analyticity and introduce spaces of analytic functions.
In particular, given $\rho>0$ we define
 \[
A_\rho(\mathbb{C}^\ell,\mathbb{C}^d)=\left\{f:\mathbb{C}^\ell(\rho)\to \mathbb{C}^d\left|f(z)=
\sum_{k=0}^\infty \sum_{|\alpha|=k}z^\alpha\eta_\alpha,
 \|f\|_\rho<\infty\right.\right\},
\]
where
\[
 \|f\|_{\rho}=\sum_{k=0}^\infty
\left(\sum_{|\alpha|=k}\|\eta_\alpha\|_\infty\right)\rho^k,
\]
$\|\cdot\|_\infty$ is the uniform norm \eqref{eq:uniformnorm}, and
$\mathbb{C}^\ell(\rho)=\{z\in\mathbb{C}^\ell:\|z\|_\infty\leq\rho\}$.

Let $d,N \in\mathbb{N}$ and $\ell=d(N+1)$.
We will consider $\mathbb{C}^\ell\simeq \left(\mathbb{C}^d\right)^{N+1}$.
As an standing assumption, the function that
defines the difference equation satisfies
$Z\in A_{\rho}\left(\mathbb{C}^\ell,\mathbb{C}^d\right)$ for some $\rho>0$;
this is, $Z$ is analytic near the origin.

 In this section we will show that the
 operator $\Phi$, defined in \eqref{eq:invariancelinear}, is
an analytic operator defined on spaces of analytic functions in $\mathbb{C}^m$.
Let $\Lambda$ be an $m\times m$ matrix such that,
if $\|z\|_\infty<1$ then $\|\Lambda z\|_\infty<1$.
We define the linear function
$\mathcal{G}:A_1\left(\mathbb{C}^m,\mathbb{C}^d \right)\to A_1\left(\mathbb{C}^m,\mathbb{C}^\ell \right)$ by
\[
\mathcal{G}(P)=\left(P,P\circ\Lambda,\ldots,P\circ\Lambda^N\right).
\]
Besides being linear, the functional $\mathcal{G}$ has the property that
\(
\mathcal{G}\left(A_1^\rho\left(\mathbb{C}^m,\mathbb{C}^d \right)\right) \subset A_1^\rho\left(\mathbb{C}^m,\mathbb{C}^\ell \right),
\)
for all $\rho>0$.
Let
\begin{equation}\label{eqn:domainX}
 X=A_1(\mathbb{C}^m,\mathbb{C}^d)
\end{equation}
and define the open set
\begin{equation}\label{eqn:domainU}
\mathcal{U}=\{f\in X:\|f\|_1<\rho\}.
 \end{equation}
For a correct application of the implicit function theorem, it suffices $\Phi$ to be $C^1$.
However, we next show if $\Phi$ is analytic.
This will give analytic dependence on parameters of the invariant manifolds.

Notice that
$\mathcal{G}\left(\mathcal{U}\right) \subset A_1^\rho\left(\mathbb{C}^m,\mathbb{C}^\ell\right)$.
Since $Z\in A_{\rho}\left(\mathbb{C}^\ell,\mathbb{C}^d\right)$, we can write
the parameterization operator as a composition
of analytic functions:
$\Phi=\mathcal{C}_Z\circ\mathcal{G}$, where $\mathcal{C}_Z$ is the composition operator
$\mathcal{C}_Z(g)= Z \circ g $.
Consequently, using Lemma~\ref{lemabonito}, we get the following:
\begin{prop}
 Let $\mathcal{U}\subset X$ as in \eqref{eqn:domainX} and \eqref{eqn:domainU}. Then the operator $\Phi:\mathcal{U}\to X$ is analytic.
\end{prop}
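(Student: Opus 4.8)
The plan is to show $\Phi = \mathcal{C}_Z \circ \mathcal{G}$ is analytic by checking each factor separately and invoking the chain rule for analytic maps between Banach spaces (which is part of the machinery set up in Appendix~\ref{sec:banach}, cf. Lemma~\ref{lemabonito}). The map $\mathcal{G}$ is already observed to be \emph{linear} and bounded — boundedness follows from the hypothesis $\|\Lambda z\|_\infty < 1$ whenever $\|z\|_\infty < 1$, which gives $\|P \circ \Lambda^i\|_1 \le \|P\|_1$ for each $i = 0,\ldots,N$, hence $\|\mathcal{G}(P)\|_1 \le (N+1)^{1/2}\|P\|_1$ in the natural norm on $A_1(\mathbb{C}^m,\mathbb{C}^\ell)$ — and a bounded linear map is trivially analytic. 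So the only real content is the analyticity of the composition operator $\mathcal{C}_Z : g \mapsto Z \circ g$.

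\emph{Analyticity of $\mathcal{C}_Z$.} First I would pin down the domain: since $Z \in A_\rho(\mathbb{C}^\ell,\mathbb{C}^d)$, the composition $Z \circ g$ makes sense precisely when $g$ maps $\mathbb{C}^m(1)$ into $\mathbb{C}^\ell(\rho)$, and $\mathcal{G}(\mathcal{U}) \subset A_1^\rho(\mathbb{C}^m,\mathbb{C}^\ell)$ (the space of analytic functions with sup-norm of the values bounded by $\rho$) exactly guarantees this. The clean way to prove $\mathcal{C}_Z$ is analytic on this set is to use the power-series characterization of analyticity: expand $Z(w) = \sum_\beta w^\beta \zeta_\beta$ with $\sum_\beta \|\zeta_\beta\|_\infty \rho^{|\beta|} < \infty$, so that formally $\mathcal{C}_Z(g) = \sum_\beta \zeta_\beta \, g^\beta$ where $g^\beta = g_1^{\beta_1}\cdots g_\ell^{\beta_\ell}$ is a monomial in the components of $g$, each factor being a bounded multilinear (indeed, via the Banach-algebra structure on $A_1$, the pointwise product) operation. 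One then shows this series of bounded homogeneous polynomial maps converges in operator norm on a neighborhood, using $\|g^\beta\|_1 \le \|g\|_1^{|\beta|}$ (Banach algebra property of $A_1(\mathbb{C}^m,\mathbb{C})$ under pointwise multiplication) together with $\|g\|_1 < \rho$; this is the step I expect Lemma~\ref{lemabonito} to deliver directly. Convergence of $\sum_\beta \|\zeta_\beta\|_\infty \|g\|_1^{|\beta|}$ for $\|g\|_1 < \rho$ is then immediate from the assumed convergence of the Taylor series of $Z$.

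\emph{Conclusion.} With $\mathcal{G}$ bounded linear (hence analytic, in particular $\mathcal{G}(\mathcal{U}) \subset A_1^\rho(\mathbb{C}^m,\mathbb{C}^\ell)$ lies in the domain where $\mathcal{C}_Z$ is analytic) and $\mathcal{C}_Z$ analytic, the composition $\Phi = \mathcal{C}_Z \circ \mathcal{G} : \mathcal{U} \to X$ is analytic by the composition theorem for analytic maps between Banach spaces.

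\emph{Main obstacle.} The genuinely delicate point is not the algebra but the functional-analytic bookkeeping: verifying that $\mathcal{C}_Z$ is well-defined and analytic requires the sup-norm control $\mathcal{G}(\mathcal{U}) \subset A_1^\rho$ (not merely $\|\mathcal{G}(P)\|_1$ small), and that one has the Banach-algebra estimate $\|gh\|_1 \le \|g\|_1\|h\|_1$ in the $A_\rho$-norm so that the formal substitution of the power series of $Z$ actually converges as a series of bounded polynomial maps. All of this is the business of Appendix~\ref{sec:banach}, so in the write-up the proof reduces to: (i) $\mathcal{G}$ linear bounded; (ii) citing Lemma~\ref{lemabonito} for analyticity of $\mathcal{C}_Z$ on $A_1^\rho$; (iii) chain rule. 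If Lemma~\ref{lemabonito} is stated in exactly the form ``composition with an analytic function of several variables is an analytic operator on the corresponding ball of analytic functions,'' then the proof is essentially one line, which matches the brevity of the Proposition as stated.
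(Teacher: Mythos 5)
Your proposal is correct and follows essentially the same route as the paper: write $\Phi=\mathcal{C}_Z\circ\mathcal{G}$, note that the linear map $\mathcal{G}$ sends $\mathcal{U}$ into the ball $A_1^\rho(\mathbb{C}^m,\mathbb{C}^\ell)$ where $\mathcal{C}_Z$ is defined, and invoke Lemma~\ref{lemabonito} (whose proof is exactly the power-series/Banach-algebra argument you sketch) for the analyticity of $\mathcal{C}_Z$. The only cosmetic difference is your factor $(N+1)^{1/2}$ in the bound for $\mathcal{G}$; with the paper's sup-type norm on $\mathbb{C}^\ell$ one simply gets $\|\mathcal{G}(P)\|_1\leq\|P\|_1$, which changes nothing.
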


\subsection{Fr\'{e}chet derivative of the parameterization operator}

Consider the parameterization operator $\Phi$ defined in \eqref{eq:invariancelinear}, in which $\Lambda$ is a diagonal matrix.
As we have seen, this operator is a function $\Phi:\mathcal{U}\to X$, where
$X$ and $\mathcal{U}$ are defined
 in \eqref{eqn:domainX} and \eqref{eqn:domainU} respectively.

We have the following.
\begin{lemm}\label{lem:frechet}
Let $\boldsymbol{\lambda}=(\lambda_1,\ldots,\lambda_m)$
be a vector such that $\|\boldsymbol{\lambda}\|_\infty<1$.
Let $\Phi:\mathcal{U}\to X$ be the parameterization operator defined in
\eqref{eq:invariancelinear} where $\Lambda=\diag(\lambda_1,\ldots,\lambda_m)$ and define
\begin{equation}\label{eq:funT}
T(\lambda):=\sum_{i=0}^N\lambda^{i}B_{i},
\end{equation}
where $B_{i}=\partial_iZ(0,\ldots,0)$.
Let $\varphi\in X$ be of the form
 \(\displaystyle
 \varphi(z)=\sum_{k=0}^\infty\sum_{|\alpha|=k}z^\alpha \varphi_\alpha .
\) Then $D\Phi(0)\varphi\in X$ and
\begin{equation}\label{eq:important-multi}
 [D\Phi(0)\varphi](z)=\sum_{k=0}^\infty\sum_{|\alpha|=k} z^\alpha T(\boldsymbol{\lambda}^\alpha) \varphi_\alpha.
\end{equation}
 \end{lemm}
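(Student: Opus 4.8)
The plan is to compute the Fréchet derivative directly from the chain rule applied to the factorization $\Phi = \mathcal{C}_Z \circ \mathcal{G}$ established in the preceding proposition, and then specialize at the base point $P \equiv 0$. First I would recall that $\mathcal{G}$ is linear and bounded, so $D\mathcal{G}(0) = \mathcal{G}$; and that for the composition operator $\mathcal{C}_Z(g) = Z \circ g$ the derivative at $g$ acts as $[D\mathcal{C}_Z(g)\psi](z) = DZ(g(z))\,\psi(z)$, which at $g = \mathcal{G}(0) = (0,\ldots,0)$ becomes multiplication by the constant matrix $DZ(0)$. Writing $DZ(0)$ in block form as $(B_0\ B_1\ \cdots\ B_N)$ with $B_i = \partial_i Z(0,\ldots,0)$, the chain rule gives
\begin{equation*}
[D\Phi(0)\varphi](z) = DZ(0)\,[\mathcal{G}\varphi](z) = \sum_{i=0}^N B_i\,\varphi(\Lambda^i z).
\end{equation*}

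Next I would insert the power series $\varphi(z) = \sum_{k}\sum_{|\alpha|=k} z^\alpha \varphi_\alpha$. Since $\Lambda = \diag(\lambda_1,\ldots,\lambda_m)$, one has $(\Lambda^i z)^\alpha = \lambda_1^{i\alpha_1}\cdots\lambda_m^{i\alpha_m}\, z^\alpha = (\boldsymbol{\lambda}^\alpha)^i\, z^\alpha$, so that $\varphi(\Lambda^i z) = \sum_{k}\sum_{|\alpha|=k} (\boldsymbol{\lambda}^\alpha)^i z^\alpha \varphi_\alpha$. Summing over $i$ and interchanging the order of summation yields
\begin{equation*}
[D\Phi(0)\varphi](z) = \sum_{k=0}^\infty \sum_{|\alpha|=k} z^\alpha \left(\sum_{i=0}^N (\boldsymbol{\lambda}^\alpha)^i B_i\right)\varphi_\alpha = \sum_{k=0}^\infty \sum_{|\alpha|=k} z^\alpha\, T(\boldsymbol{\lambda}^\alpha)\,\varphi_\alpha,
\end{equation*}
which is exactly \eqref{eq:important-multi}. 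To justify that $D\Phi(0)\varphi \in X = A_1(\mathbb{C}^m,\mathbb{C}^d)$, I would estimate $\|D\Phi(0)\varphi\|_1 \le \sum_k \big(\sum_{|\alpha|=k}\|T(\boldsymbol{\lambda}^\alpha)\|\,\|\varphi_\alpha\|_\infty\big)$ and bound $\|T(\boldsymbol{\lambda}^\alpha)\| \le \sum_i \|B_i\| \,|\boldsymbol{\lambda}^\alpha|^i \le \sum_i \|B_i\|$ using $\|\boldsymbol{\lambda}\|_\infty < 1$; this gives $\|D\Phi(0)\varphi\|_1 \le \big(\sum_i \|B_i\|\big)\|\varphi\|_1 < \infty$, so the series converges in $X$ and $D\Phi(0)$ is bounded.

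The only genuine subtlety — the main obstacle — is legitimizing the rearrangement of the double sum $\sum_i \sum_{k}$ into $\sum_k \sum_{i}$ and identifying the result as the Fréchet (not merely Gâteaux) derivative. The rearrangement is handled by the absolute convergence estimate above (Fubini/Tonelli for series in the Banach space $X$). For the Fréchet property, rather than reproving it from scratch I would invoke the analyticity of $\Phi$ already established (via Lemma~\ref{lemabonito}): an analytic map between Banach spaces is Fréchet differentiable, and its derivative is the linear term of its local expansion, which is precisely what the formal chain-rule computation produces. Thus the formula \eqref{eq:important-multi} records the first-order term of the convergent expansion of $\Phi$ about $0$, and no separate remainder estimate is needed.
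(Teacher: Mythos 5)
Your proposal is correct and follows essentially the same route as the paper: the paper likewise asserts $[D\Phi(0)\varphi](z)=\sum_{i=0}^N B_i\,\varphi(\Lambda^i z)$ (implicitly from the factorization $\Phi=\mathcal{C}_Z\circ\mathcal{G}$ with $\mathcal{G}$ linear), substitutes the power series using $(\Lambda^i z)^\alpha=(\boldsymbol{\lambda}^\alpha)^i z^\alpha$, and collects terms into $T(\boldsymbol{\lambda}^\alpha)$. You merely supply the absolute-convergence justification for the rearrangement and the appeal to analyticity for the Fréchet property, both of which the paper leaves implicit.
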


 \proof
 We notice the conditions on $\boldsymbol{\lambda}$ imply that
 $\varphi\circ\Lambda^i\in X$ for all $i=0,\ldots,N$. We also have that the
 Fr\'{e}chet derivative of $\Phi$  satisfies
 \begin{equation}\label{eqn:hashsqr}
 [D\Phi(0)\varphi](z)=\sum_{i=0}^NB_{i} \,\varphi\left(\Lambda^{i} z\right),
 \end{equation}
 for all $\|z\|_\infty\leq1$.
This implies that $D\Phi(0)\varphi\in X$.
 Clearly, $\left(\Lambda^{i} z\right)^\alpha=z^\alpha \left(\boldsymbol{\lambda}^\alpha\right)^{i}$ and therefore
 \[
 \varphi\left(\Lambda^{i} z\right)=
 \sum_{k=0}^\infty\sum_{|\alpha|=k} z^\alpha \left(\boldsymbol{\lambda}^\alpha\right)^{i} \eta_\alpha.
\]
Combining the last equation with \eqref{eq:funT} and \eqref{eqn:hashsqr}, we get \eqref{eq:important-multi}.
\qed

Let $H$ be the Banach subspace of analytic functions in the unit disk, 
that vanish at the origin along with their first derivatives.
\begin{equation}\label{eq:espacioH}
H=\left\lbrace P^>\in X\left|
P^>(z)=\sum_{k=2}^\infty\sum_{|\alpha|=k} z^\alpha P_\alpha ;\,
\|P^>\|_1=\sum_{k=2}^\infty\sum_{|\alpha|=k} \|P_\alpha \|_\infty<\infty \right.\right\rbrace .
\end{equation}
\begin{remark}
 In the notation of the appendix \ref{sec:banach}, this subspace is just 
$H=\{P\in A_1(\mathbb{C}^m,\mathbb{C}^d):P(0)=0, P'(0)=0\}$.
\end{remark}

Clearly, \eqref{eq:important-multi} implies that $H$ is invariant under $D\Phi(0)$.
In addition, we get the following result.
\begin{lemm}
\label{lem:bonito}
Let $\Lambda=\diag(\lambda_1,\ldots,\lambda_m)$, where
$\boldsymbol{\lambda}=(\lambda_1,\ldots,\lambda_m)$ is a non-resonant stable vector of eigenvalues.
 If $B_{0}$ is a non-singular matrix, then
\begin{enumerate}
 \item $D\Phi(0)$ is invertible in $H$, with bounded inverse.
 \item If $\varphi\in H$ is such that
\(
 \varphi(z)=\sum_{k=2}^\infty\sum_{|\alpha|=k} z^\alpha \varphi_\alpha
\)
 and $\eta=D\Phi(0)^{-1}\varphi$ can be written as
 \(
 \eta(z)=\sum_{k=2}^\infty\sum_{|\alpha|=k} z^\alpha \eta_\alpha,
\)
 then
\(
\eta_\alpha =T(\boldsymbol{\lambda}^\alpha)^{-1}\varphi_\alpha
\), for every multi-index $\alpha$ such that $|\alpha|\geq2$.
\end{enumerate}
\end{lemm}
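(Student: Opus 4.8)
The plan is to establish that the diagonal action of $D\Phi(0)$ on power series coefficients, given explicitly by \eqref{eq:important-multi}, is boundedly invertible on $H$. First I would verify well-definedness of the candidate inverse: for each multi-index $\alpha$ with $|\alpha|\geq 2$, non-resonance (Definition~\ref{non-resonant-set}, part b) gives $\mathcal{F}(\boldsymbol{\lambda}^\alpha)=\det T(\boldsymbol{\lambda}^\alpha)\neq 0$, so $T(\boldsymbol{\lambda}^\alpha)$ is invertible and the formula $\eta_\alpha = T(\boldsymbol{\lambda}^\alpha)^{-1}\varphi_\alpha$ makes sense. This already pins down the inverse uniquely, since $D\Phi(0)$ acts diagonally in the multi-index grading and preserves $H$.

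The substantive point is the uniform bound $\|T(\boldsymbol{\lambda}^\alpha)^{-1}\|\leq C$ over all $|\alpha|\geq 2$, which is what makes the formal inverse a bounded operator: given such a bound, $\|\eta\|_1 = \sum_{|\alpha|\geq 2}\|T(\boldsymbol{\lambda}^\alpha)^{-1}\varphi_\alpha\|_\infty \leq C\sum_{|\alpha|\geq 2}\|\varphi_\alpha\|_\infty = C\|\varphi\|_1$, so $D\Phi(0)^{-1}$ is bounded by $C$ on $H$. To get the uniform bound I would argue as in the remark following Definition~\ref{defi:hyperbolic}: since $\boldsymbol{\lambda}$ is stable, $\boldsymbol{\lambda}^\alpha \to 0$ as $|\alpha|\to\infty$, and $T$ extends continuously (indeed polynomially) to $\lambda = 0$ with $T(0)=B_0$ non-singular. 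Hence $\lambda\mapsto T(\lambda)^{-1}$ is continuous on a neighborhood of $0$, in particular bounded on the compact set $\{\,\boldsymbol{\lambda}^\alpha : |\alpha|\geq 2\,\}\cup\{0\}$, which is contained in the closed disk of radius $(\max_i|\lambda_i|)^2<1$ in $\mathbb{C}$; combined with invertibility of $T$ at the finitely many points $\boldsymbol{\lambda}^\alpha$ that happen to be large, one gets a single constant $C$. The only care needed is that $T(\boldsymbol{\lambda}^\alpha)$ is invertible at \emph{every} relevant $\alpha$, not just for $|\alpha|$ large — but that is exactly the non-resonance hypothesis, handling the finitely many remaining $\alpha$.

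I would then conclude: define $D\Phi(0)^{-1}$ by the coefficient formula, check it lands in $H$ (the series starts at $|\alpha|=2$ because $\varphi$ does), check $\|D\Phi(0)^{-1}\|\leq C$ by the estimate above, and check it is a two-sided inverse by composing with \eqref{eq:important-multi} coefficient-by-coefficient, where $T(\boldsymbol{\lambda}^\alpha)T(\boldsymbol{\lambda}^\alpha)^{-1}=\Id$ termwise. Part (b) of the lemma is then simply the explicit formula used in the construction. The main obstacle is purely the uniform bound on $\|T(\boldsymbol{\lambda}^\alpha)^{-1}\|$; everything else is formal bookkeeping in the graded Banach space $X$. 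One subtlety worth flagging is that strictly speaking $T$ is defined on $\mathbb{C}$ while $\boldsymbol{\lambda}^\alpha\in\mathbb{C}$ is a product of the stable eigenvalues, so there is no multi-dimensional issue — the argument genuinely reduces to continuity of a single matrix-valued function of one complex variable near $0$, which is why non-singularity of $B_0$ (the local Legendre condition) is the right hypothesis.
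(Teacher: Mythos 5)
Your proposal is correct and follows essentially the same route as the paper's proof: invertibility of each $T(\boldsymbol{\lambda}^\alpha)$ from non-resonance, a uniform bound on $\|T(\boldsymbol{\lambda}^\alpha)^{-1}\|$ obtained by combining the limit $T(\boldsymbol{\lambda}^\alpha)\to B_0$ (non-singular) for $|\alpha|$ large with the finitely many remaining exponents, and then the explicit diagonal coefficient formula with the estimate $\|\eta\|_1\leq C\|\varphi\|_1$. No gaps.
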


\proof
 Clearly, as $k\to\infty$ we get that $T(\lambda^k)\to B_{0}$, a matrix
 that is invertible. This implies that there exists a constant
 $C_0>0$ and a radius $0<\delta<1$
 such that, if $|\lambda|<\delta$ then $\|T(\lambda)^{-1}\|\leq C_0$. In particular, if
 $|\boldsymbol{\lambda}^\alpha|<\delta$ then
 $
 \|T(\boldsymbol{\lambda}^\alpha)^{-1}\|\leq C_0.
 $

We know that, since $\boldsymbol{\lambda}$ is non-resonant, the matrices $T(\boldsymbol{\lambda}^\alpha)$
are invertible, whenever $|\alpha|\geq 2$.
Since $\boldsymbol{\lambda}$ is stable there is only a finite number of
elements in the set
\[
\{\boldsymbol{\lambda}^\alpha\in\mathbb{C}:|\alpha|\geq 2, |\boldsymbol{\lambda}^\alpha|\geq\delta\}.
\]
Let $C>0$ be a constant such that $C\geq C_0$ and
\[
C\geq \max
\{\|T(\boldsymbol{\lambda}^\alpha)^{-1}\|:|\alpha|\geq 2, |\boldsymbol{\lambda}^\alpha|\geq\delta\}.
\]
This implies that
\(
\|T(\boldsymbol{\lambda}^\alpha)^{-1}\|\leq C,
\)
for all $|\alpha|\geq2$.

From \eqref{eq:important-multi} we get that $D\Phi(0)$ is injective.
Let $\varphi\in H$ with
\(
 \varphi(z)=\sum_{k=2}^\infty\sum_{|\alpha|=k} z^\alpha \varphi_\alpha.
\)
Define
\(
 \eta(z)=\sum_{k=2}^\infty\sum_{|\alpha|=k} \eta_\alpha z^\alpha,
\)
with
\(
\eta_\alpha = T(\boldsymbol{\lambda}^\alpha)^{-1}\varphi_\alpha
\), for every multi-index $|\alpha|\geq2$.
From \eqref{eq:important-multi}, we have that $D\Phi(0)\eta=\varphi$ and
therefore $D\Phi(0)$ is invertible in $H$. In addition,
\[
\|D\Phi(0)^{-1}\varphi\|_1=
\|\eta\|_1=\sum_{k=2}^\infty\sum_{|\alpha|=k}\|T(\boldsymbol{\lambda}^\alpha)^{-1}\varphi_\alpha\|_\infty\leq
C\sum_{k=2}^\infty\sum_{|\alpha|=k}\|\varphi_\alpha\|_\infty=C\|\varphi\|_1.
\]
We conclude that $D\Phi(0)$ is invertible in $H$ and $\left\|D\Phi(0)^{-1}\right\|\leq C$.
\qed

\subsection{Proof of Theorem~\ref{thm:main}.}\label{sec:IFT}

Let $H$ as in $\eqref{eq:espacioH}$.
When we consider the linear part $V$ of the
parameterization $P$, it is convenient to choose
the scale sufficiently small. Choosing this scale is
tantamount to choosing the radius of convergence of
the solution $P$.
Let $\tau=(\tau_1,\ldots,\tau_m)$, where $\tau_1,\ldots,\tau_m$ represent scaling factors for
 the columns of the linear part.
We will use the notation $\tau\cdot V=V\diag(\tau_1,\ldots,\tau_m)$.

Since any possible solution of \eqref{eq:invariancelinear}
has to match the lower order terms found, it is
natural to consider a new decomposition
$P = \tau\cdot V + P^{>} $ where $P^{>}$ is an analytic function which vanishes 
to order $2$ and depends on the size of the scale $\tau$. Because of the change of variables, 
we can seek for $P^>$ among analytic functions of radius $1$ that vanish to first order,
i.e. $P^>\in H$.

Hence we write the equation \eqref{eq:invariancelinear}
as
\begin{equation}\label{eq:modified}
\Psi( \tau, P^{>}) \equiv \Phi( \tau\cdot V + P^>) = 0.
\end{equation}
It is important to note, since $V$ is
known, that we only need to find the appropriate scale $\tau$, and
the function $P^{>}$.
Furthermore, since $P^>$ vanishes to order $1$, $\Psi(\tau, P^> )$ also vanishes to order $1$.
In other words, we can choose $H$ to be the
codomain of $\Psi$. In addition, the coefficients of $P^>$ are small if
$\tau$ is small.

We notice that there exists an open subset $\mathcal{V}\subset\mathbb{R}^m\times H$
defined by the property that
if $(\tau,P^>)\in\mathcal{V}$ then $\tau\cdot V +P^>\in\mathcal{U}$, the domain of $\Phi$.
In this way, we can restrict the domain of $\Psi$ and consider
$\Psi: \mathcal{V} \rightarrow H $.
It is clear that this is a neighborhood of the origin $(0,0)$ in which the operator
$\Psi$ is defined and is analytic.

Taking the derivative with respect to the second variable, it is clear that
 $D_2 \Psi(0,0)=D\Phi(0)$ and therefore, by Lemma \ref{lem:bonito}, we have
that the operator
\[
D_2 \Psi(0,0): H \rightarrow H
\]
is invertible with bounded inverse.
The implicit function theorem in Banach spaces \cite{haaser1991real} implies that there exists
$\delta>0$ and a function
 $\tau\mapsto P^<_\tau$ such that if $|\tau|<\delta$, then
$\Psi(\tau,P^>_\tau)=0$ and the solution is unique if
we require $\|P^>\|_1<\delta$.

Fix a solution of \eqref{eq:modified} of the form
$(\tau,P^>_\tau)$ such that $\tau$ has positive entries and
$\Psi(\tau,P^>_\tau)\equiv0$. Then $\tilde P(z)=\tau \cdot V\,z+P^>_\tau(z)$ is a solution
of the parameterization problem. By construction, $\tilde P$ is an analytic
function with radius of convergence $1$. If we want to have $V$ as the linear part of
the solution, we modify $\tilde P$ in the following way. Let
$P=\tilde{P}\circ \diag(\tau_1,\ldots,\tau_m)^{-1}$ or
\[
P(z)=Vz+P^>_\tau(\diag(\tau_1,\ldots,\tau_m)^{-1}z).
\]
Using a linear change of variables as in Section \ref{sec:simplification}, we
conclude that $P$ is also a solution of the parameterization problem.
However, the radius of convergence is not longer $1$ but
depends on the choice of $\tau$. If we let
\(
r=\min\{\tau_1,\ldots,\tau_m\},
\)
then $r>0$, and $\|z\|_\infty\leq r$ implies that
$\|\diag(\tau_1,\ldots,\tau_m)^{-1}z\|_\infty\leq1$.
We conclude that $P\in A_r(\mathbb{C}^m,\mathbb{C}^d)$ and
has radius of convergence equal to $r$.
\qed

\subsection{Formal approximations to higher order}
\label{higherorder}

Once $P'(0)=V$ and $\Lambda$
are chosen, the solution $P$ of the parameterization
problem can be approximated with the first terms of the
power series. Due to analyticity, we can write the solution
$P\in A_\rho(\mathbb{C}^{m},\mathbb{C}^{d})$ of the parameterization problem as
a sum of homogeneous polynomials like in \eqref{eqn:hash2}:
\[
P(z)=\sum_{\ell=1}^\infty\sum_{|\alpha|=\ell} z^\alpha P_\alpha,
\]
where $P_\alpha\in\mathbb{C}^d$, and has $P$ radius of convergence $\rho$.

For each multi-index $\alpha\in\mathbb{Z}_+^m$,
we will use the notation $[\cdot]_\alpha$ for the coefficient vector of the
term that corresponds to $z^\alpha$.
Clearly, if $f$ is an analytic function at the origin, then
this coefficient can be written as
$[f]_\alpha=\partial^\alpha f(0)/\alpha!$. In the case of $P$ above, we get that
 $[P]_\alpha=P_\alpha$.

For each $n\in\mathbb{N}$, let $P^{\leq\,n}$ be the polynomial
\[
P^{\leq\,n}
(z)=\sum_{\ell=1}^n\sum_{|\alpha|=\ell} z^\alpha P_\alpha.
\]
It is clear that $\|P-P^{\leq\,n}\|_\rho\to 0$ as $n\to\infty$.
 Notice also that, for any $|\alpha | \leq n$,
\begin{equation} \label{iorder}
\left[\Phi\left(P^{\leq\,n}\right)\right]_\alpha=\left[\Phi\left(P^{\,}\right)\right]_\alpha=0,
\end{equation}
We will describe how to construct the polynomials $P^{\leq\,n}$
recursively, provided that the eigenvalues are non-resonant.

A simple computation shows that $\left[\Phi\left(P^{\leq\,1}\right)\right]_\alpha=0$
for $|\alpha|=1$.
Let $\mathcal{N}(P)=D\Phi(0)P-\Phi(P)$. Then, it turns out that
\(
\left[\mathcal{N}\left(P^{\leq\,n}\right)\right]_\alpha=\left[\mathcal{N}\left(P^{\leq\, n+1}\right)\right]_\alpha,
\)
for all $|\alpha |= n+1$.
Lemma \ref{lem:frechet} implies that
 $[D\Phi(0)P]_\alpha=T(\boldsymbol{\lambda}^\alpha) P_\alpha$, for all $|\alpha|>1$. Therefore, we conclude that
\begin{align*}
 0=\left[\Phi\left(P^{\leq\,n+1}\right)\right]_\alpha&=\left[D\Phi(0)P^{\leq\,n+1}+\mathcal{N}\left(P^{\leq\,n+1}\right)\right]_\alpha\\
 &=\left[D\Phi(0)P^{\leq\,n+1}\right]_\alpha + \left[\mathcal{N}\left(P^{\leq\,n}\right)\right]_\alpha\\
 &=T(\boldsymbol{\lambda}^\alpha) P_\alpha+\left[\mathcal{N}\left(P^{\leq\,n}\right)\right]_\alpha,
\end{align*}
From this we find the expression
\begin{equation}\label{eqn:nnn}
 P_\alpha= T(\boldsymbol{\lambda}^\alpha)^{-1} \left[\mathcal{N}\left(P^{\leq\,n}\right)\right]_\alpha
 =T(\boldsymbol{\lambda}^\alpha)^{-1} \left[\Phi\left(P^{\leq\,n}\right)\right]_\alpha,
\end{equation}
for all $|\alpha|=n+1$. The polynomial $P^{\leq\,n+1}$ can be found from
$P^{\leq\,n}$ and recursion \eqref{eqn:nnn}.

It is important to note that the choice of $P'(0)$ determines
the tangent space to the manifold. Hence $V$ is determined
once we choose this space. On the other hand,
$P'(0)$ is determined only up to the size of its columns.
These multiples will not be too crucial for the mathematical
analysis, but it will be important in the numerical
calculations in Section~\ref{sec:numerics}.

\begin{lemm}\label{lem:allorders}
With the notations above, assume that
$\boldsymbol{\lambda}=(\lambda_1,\ldots,\lambda_m)$
 is a stable non-resonant vector of eigenvalues,
and that \eqref{eq:linearrelation} is satisfied
with $\Lambda = \diag(\lambda_1, \ldots, \lambda_m)$
and for some $V=P'(0)$ of maximal rank.
Then, for every $|\alpha | \ge 2$, we can find a
unique $P_\alpha$ such that \eqref{iorder} holds.
Furthermore, we can make all $P_\alpha$ arbitrarily
small by making the columns of $P'(0)$ sufficiently small.
\end{lemm}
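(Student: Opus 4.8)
The plan is to carry out the recursive construction of the coefficients $P_\alpha$ that was sketched just above the lemma statement, and to track carefully the dependence on the size of the columns of $V = P'(0)$. First I would set up the induction on $|\alpha|$: the base cases $|\alpha| = 1$ are handled by the hypothesis that $(V,\Lambda)$ satisfies the linear relation \eqref{eq:linearrelation}, which is precisely the statement $\left[\Phi(P^{\leq 1})\right]_\alpha = 0$ for $|\alpha| = 1$. For the inductive step, assume $P^{\leq n}$ has been determined so that \eqref{iorder} holds for all $|\alpha| \le n$. Using the splitting $\Phi(P) = D\Phi(0)P - \mathcal{N}(P)$ with $\mathcal{N}(P) = D\Phi(0)P - \Phi(P)$, and the key observation that $\left[\mathcal{N}(P^{\leq n})\right]_\alpha = \left[\mathcal{N}(P^{\leq n+1})\right]_\alpha$ for $|\alpha| = n+1$ (because $\mathcal{N}$ vanishes to second order, so adding the degree-$(n+1)$ terms $P_\alpha$ to $P^{\leq n}$ does not affect the degree-$(n+1)$ coefficient of $\mathcal{N}$), together with Lemma \ref{lem:frechet} which gives $[D\Phi(0)P]_\alpha = T(\boldsymbol{\lambda}^\alpha)P_\alpha$ for $|\alpha| > 1$, I would derive the recursion \eqref{eqn:nnn}, namely $P_\alpha = T(\boldsymbol{\lambda}^\alpha)^{-1}\left[\Phi(P^{\leq n})\right]_\alpha$. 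The non-resonance of $\boldsymbol{\lambda}$ guarantees that $T(\boldsymbol{\lambda}^\alpha) = \mathcal{F}$-type matrix is invertible for $|\alpha| \ge 2$ (this is exactly Definition \ref{def:nrc}/\ref{non-resonant-set} combined with $B_0$ nonsingular, as already used in Lemma \ref{lem:bonito}), so $P_\alpha$ is well-defined and unique. This establishes existence and uniqueness of all $P_\alpha$.

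For the smallness claim, I would introduce the scaling $V \mapsto \tau \cdot V = V\,\diag(\tau_1,\ldots,\tau_m)$ as in Subsection \ref{sec:IFT} and track how $P_\alpha$ scales. The cleanest way is the functional-analytic route already available: by Theorem \ref{thm:main} (and its proof), for each small $\tau$ with positive entries there is a genuine analytic solution $\tilde P = \tau\cdot V\,z + P^>_\tau$ with $P^>_\tau \in H$, $\|P^>_\tau\|_1 < \delta$, and $P^>_\tau \to 0$ as $\tau \to 0$ by continuity of the implicit-function-theorem solution map $\tau \mapsto P^>_\tau$ at $\tau = 0$ (where $P^>_0 = 0$). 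Since the coefficients $[\tilde P]_\alpha$ for $|\alpha| \ge 2$ are exactly the coefficients $[P^>_\tau]_\alpha$, and $\left|[P^>_\tau]_\alpha\right| \le \|P^>_\tau\|_1$ for every $\alpha$, all degree-$\ge 2$ coefficients of $\tilde P$ can be made uniformly small by taking $\tau$ small. Finally, by the uniqueness just proved, these $[\tilde P]_\alpha$ coincide with the $P_\alpha$ produced by the recursion \eqref{eqn:nnn} applied to the linear part $\tau\cdot V$; so making the columns of $P'(0)$ small (i.e. taking $\tau$ small) makes all $P_\alpha$ arbitrarily small, which is the assertion.

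I expect the main obstacle to be the bookkeeping in the smallness argument rather than the recursion itself. One has to be careful that "the columns of $P'(0)$ small" is the same degree of freedom as the scaling parameter $\tau$ in the proof of Theorem \ref{thm:main}, and that the uniqueness statement for the formal coefficients matches the uniqueness of the analytic solution, so that one may legitimately read off smallness of $P_\alpha$ from smallness of $\|P^>_\tau\|_1$. An alternative, purely algebraic route — estimating the recursion \eqref{eqn:nnn} directly by showing that each $\left[\mathcal{N}(P^{\leq n})\right]_\alpha$ is a polynomial in the lower-order $P_\beta$ with no constant or linear term, hence homogeneous of total degree $\ge 2$ in the entries of $\tau\cdot V$ — also works and avoids invoking Theorem \ref{thm:main}, but requires uniform control of $\|T(\boldsymbol{\lambda}^\alpha)^{-1}\|$ (available from the proof of Lemma \ref{lem:bonito}) and a Cauchy-majorant estimate; I would present the functional-analytic argument as the main proof and remark on the algebraic one.
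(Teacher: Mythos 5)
Your proposal is correct and follows essentially the same route as the paper: the lemma is proved by exactly the recursion \eqref{eqn:nnn}, derived from the splitting $\Phi = D\Phi(0) - \mathcal{N}$, the stability of the order-$(n+1)$ coefficient of $\mathcal{N}$ under adding order-$(n+1)$ terms, Lemma \ref{lem:frechet}, and the invertibility of $T(\boldsymbol{\lambda}^\alpha)$ from non-resonance. The paper leaves the smallness claim implicit (it follows from the homogeneity $P_\alpha \mapsto \tau^\alpha P_\alpha$ under the rescaling $V \mapsto \tau\cdot V$ of Section~\ref{sec:simplification}); your justification via the continuity of the implicit-function-theorem solution map is a legitimate, non-circular alternative, and your sketched algebraic route is the one the paper intends.
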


\begin{remark}
The assumption that the matrix $\Lambda$ is diagonal can be eliminated.
Following the discussion in Section~\ref{sec:simplification}, it suffices that
$\Lambda$ is diagonalizable.
\end{remark}

\begin{remark}
As we will see in  Section~\ref{sec:numerics} the proof
in this section can be
turned into an efficient algorithm using the methods of
\emph{``automatic differentiation''} \cite{JorbaZ,
automatic1, automatic2} which allow a fast evaluation of the
coefficients $P_\alpha$, specially in the case that the manifolds are
$1$-dimensional. More details, including an implementation
in examples, are given int Section~\ref{sec:numerics}.
\end{remark} 

\begin{remark} 
Notice that the main theorem is proved by a contraction 
mapping theorem. The formal solutions are indeed an approximate 
solution. Indeed, in practical problems -- see Section~\ref{sec:numerics} --
it is possible to produce solutions that have an error 
comparable to round off.  These bounds can be proved  
rigorously using interval arithmetic. 

Given some bounds on the contraction properties of 
the  operator, one concludes bounds  on the distance 
between the approximate solution and the true solution.
Hence, the proof presented here gives a strategy to lead 
to computer assisted proofs.
\end{remark}

\section{Singular case and dependence on parameters}

\subsection{Singular case}
\label{sec:singular}

In this section, we show how the results can be extended to the case
in which $B_0$ is singular. The key will be to generalize Lemma \ref{lem:bonito}. This can
be done by estimating the singularity of the matrix
$T(\lambda)$ that was defined in \eqref{eq:funT}.

\begin{example}
Consider the Lagrangian function
$S:\mathbb{R}^2\times\mathbb{R}^2\to\mathbb{R}$ given by
\[
S(\theta_0,\theta_1)=-\theta_0^T\left(
 \begin{array}{cc}
 0 & 0 \\
 0 & 1 \\
 \end{array}
 \right)\theta_1
 +\frac12
\theta_0^T\left(
 \begin{array}{cc}
 1 & 1 \\
 1 & 6 \\
 \end{array}
 \right)\theta_0.
\]
Let $Z$ be the difference equation that arises from the Euler-Lagrange equation \eqref{eq:Euler-Lagrange}.
 The point $\theta^*=(0,0)$ gives a fixed point solution. As before, we define
 $T(\lambda)$ as in equation \eqref{eq:funT}.
 Using definition~\eqref{eq:characlagr} in remark \ref{rm:util},
 it is possible to verify that the characteristic polynomial is of the form
$\mathcal{F}(\lambda)=\lambda(-2\lambda+1)(\lambda-2)$.
 Therefore, $\theta^*$ is singular. We notice that the degree is strictly less than the maximum
 $Nd=4$, $\mathcal{F}(0)=0$ and $\lambda$ divides $\mathcal{F}(\lambda)$.
 \end{example}

In order to deal with the singular case, we have the following result, that
will lead to a generalization of Lemma \ref{lem:bonito}.
 \begin{lemm}\label{lem:unpo}
 Let $\theta^*$ be a singular fixed point solution with characteristic polynomial $\mathcal{F}$.
Let $e$ be the greatest integer $e\in\mathbb{Z}_+$ such that $\lambda^e$ divides $\mathcal{F}(\lambda)$, i.e., 
the polynomial $\mathcal{F}$ is of the form $\mathcal{F}(\lambda)=\lambda^eg(\lambda)$,
 where $g$ is a polynomial such that $g(0)\neq0$.
 Let $T$ be as in \eqref{eq:funT} and $\boldsymbol{\lambda}=(\lambda_1,\ldots,\lambda_m)$
 be a stable non-resonant vector of eigenvalues none of which is zero.
 Then there exists a constant $C>0$ such that
\begin{equation}\label{eq:estimate0}
\left\|T(\boldsymbol{\lambda}^\alpha)^{-1}\right\|\leq C\left(\boldsymbol{\lambda}^\alpha\right)^{-e},
\end{equation}
for all multi-indices $|\alpha|\geq 2$.
 \end{lemm}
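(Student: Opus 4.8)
The plan is to reduce everything to the cofactor formula for the inverse and the factorization of the determinant. By Definition~\ref{def:charac} and \eqref{eq:funT} we have $\det T(\lambda)=\mathcal{F}(\lambda)=\lambda^{e}g(\lambda)$, and $T(\lambda)^{-1}=\mathrm{adj}(T(\lambda))/\det T(\lambda)$ whenever $\det T(\lambda)\neq0$. The entries of $\mathrm{adj}(T(\lambda))$ are $(d-1)\times(d-1)$ minors of $T(\lambda)$, hence polynomials in $\lambda$ (with matrix coefficients built from the $B_i$), so they are bounded on any closed disk about $0$; say $\|\mathrm{adj}(T(\lambda))\|\le M$ for $\lvert\lambda\rvert\le\delta_0$. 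Since $g(0)\neq0$, continuity lets us shrink $\delta_0$ to some $\delta\in(0,1)$ with $\lvert g(\lambda)\rvert\ge c_0>0$ on $\lvert\lambda\rvert\le\delta$. Combining, for every $\lambda$ with $0<\lvert\lambda\rvert\le\delta$,
\[
\|T(\lambda)^{-1}\|=\frac{\|\mathrm{adj}(T(\lambda))\|}{\lvert\lambda\rvert^{e}\,\lvert g(\lambda)\rvert}\le\frac{M}{c_0}\,\lvert\lambda\rvert^{-e}=:C_0\,\lvert\lambda\rvert^{-e}.
\]

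Next I would split the multi-indices into two groups. Because $\boldsymbol{\lambda}$ is stable, $\lvert\boldsymbol{\lambda}^{\alpha}\rvert\le(\max_i\lvert\lambda_i\rvert)^{\lvert\alpha\rvert}\to0$, so the set $S=\{\alpha:\lvert\alpha\rvert\ge2,\ \lvert\boldsymbol{\lambda}^{\alpha}\rvert\ge\delta\}$ is finite. For $\lvert\alpha\rvert\ge2$ with $\alpha\notin S$ we have $0<\lvert\boldsymbol{\lambda}^{\alpha}\rvert<\delta$, the lower bound holding because no $\lambda_i$ vanishes, so the displayed estimate applies verbatim and gives \eqref{eq:estimate0} with constant $C_0$. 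For $\alpha\in S$, the non-resonance condition (Definition~\ref{non-resonant-set}) gives $\mathcal{F}(\boldsymbol{\lambda}^{\alpha})\neq0$, hence $T(\boldsymbol{\lambda}^{\alpha})$ is invertible; put $C_1=\max_{\alpha\in S}\|T(\boldsymbol{\lambda}^{\alpha})^{-1}\|$ (with $C_1=0$ if $S=\emptyset$). Since $\lvert\boldsymbol{\lambda}^{\alpha}\rvert<1$ forces $\lvert\boldsymbol{\lambda}^{\alpha}\rvert^{-e}\ge1$, we get $\|T(\boldsymbol{\lambda}^{\alpha})^{-1}\|\le C_1\le C_1\lvert\boldsymbol{\lambda}^{\alpha}\rvert^{-e}$. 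Taking $C=\max(C_0,C_1)$ yields the claim for all $\lvert\alpha\rvert\ge2$.

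I expect no serious obstacle: the argument is the same mechanism as Lemma~\ref{lem:bonito}, now with the polynomial factor $\lambda^{-e}$ tracked explicitly through the cofactor formula, and the case $e=0$ (i.e. $B_0$ non-singular) recovers that lemma. The only points requiring a line of care are (i) that $\mathrm{adj}(T(\lambda))$ is entrywise polynomial, hence uniformly bounded near $0$ — immediate from $T(\lambda)=\sum_{i=0}^{N}\lambda^{i}B_{i}$ — and (ii) the lower bound $\lvert g\rvert\ge c_0$ near $0$, which is continuity together with $g(0)\neq0$. The hypothesis that no $\lambda_i$ equals zero enters precisely to guarantee $\boldsymbol{\lambda}^{\alpha}\neq0$, so that both $\det T(\boldsymbol{\lambda}^{\alpha})$ and the power $(\boldsymbol{\lambda}^{\alpha})^{-e}$ make sense; this is where one must be mildly attentive.
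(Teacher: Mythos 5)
Your proof is correct and follows essentially the same route as the paper: the paper likewise writes $T(\lambda)^{-1}=Q(\lambda)/(\lambda^e g(\lambda))$ with $Q$ a polynomial matrix (the adjugate), observes that $\lambda^e T(\lambda)^{-1}$ stays bounded as $\lambda\to0$, and then handles the finitely many multi-indices with $|\boldsymbol{\lambda}^\alpha|\geq\delta$ exactly as in Lemma~\ref{lem:bonito}. You have merely made explicit the bounds on $\mathrm{adj}(T(\lambda))$ and $|g(\lambda)|$ that the paper leaves implicit.
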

 \proof
 The inverse $T(\lambda)^{-1}$ is a rational function of the form
 \[
 T(\lambda)^{-1}=\frac1{\lambda^eg(\lambda)}Q(\lambda),
 \]
 where $Q$ is a polynomial matrix and $g(0)\neq0$. Then $\lambda^eT(\lambda)^{-1}$ is also a rational function, but the limit
 $\lim_{\lambda\to0}\lambda^eT(\lambda)^{-1}$ exists.

 As in the proof of Lemma \ref{lem:bonito}, we can argue that
 since $\boldsymbol{\lambda}$ is non-resonant and stable, the matrices $T(\boldsymbol{\lambda}^\alpha)$
 are invertible and
 $\left(\boldsymbol{\lambda}^{\alpha}\right)^eT(\boldsymbol{\lambda}^\alpha)^{-1}$
 are uniformly bounded for all multi-indices $|\alpha|\geq 2$. Therefore,
 there exists a constant $C>0$ such that
$\left\|\left(\boldsymbol{\lambda}^{\alpha}\right)^eT(\boldsymbol{\lambda}^\alpha)^{-1}\right\|\leq C$.
 \qed

Lemma \ref{lem:unpo} tells us that the derivative
$D\Phi(0)^{-1}$ is an operator, but it might not be well
defined for all the elements of $H$. We will introduce a new Banach space.
For each $\mu>0$, let $D(\mu)=\mathbb{C}^m(\mu)=\{z\in\mathbb{C}^m:\|z\|_\infty\leq\mu\}$
be the complex disk around the origin of complex radius $\mu$. Using $D(\mu)$, we define
 \[
H(\mu):=\left\lbrace P^>:D(\mu)\to\mathbb{C}^d
\left| P^>(z)=\sum_{k=2}^\infty\sum_{|\alpha|=k} z^\alpha P_\alpha ;
\,\|P^>\|_\mu=\sum_{k=2}^\infty\sum_{|\alpha|=k} \mu^n\|P_\alpha \|_\infty<\infty \right.\right\rbrace .
\]

\begin{remark}
 Notice that, in the notation of the appendix,
 \[
 H(\mu)=\{P\in A_\mu(\mathbb{C}^m,\mathbb{C}^d):P(0)=0, P'(0)=0\}.
 \]
\end{remark}
If $\mu_1<\mu_2$ then
$D(\mu_1)\subset D(\mu_2)$ and $H(\mu_2)$ can be regarded as
a subspace of $H(\mu_1)$ through the standard inclusion
$H(\mu_2)\hookrightarrow H(\mu_1)$ given by
\[
f\mapsto \left.f\right|_{D(\mu_1)}
.\] In particular, we have that $H=H(1)$ and if
$0<\mu<1$ then we have the inclusion $H\hookrightarrow H(\mu)$.
In that case, we define
 an operator $\Delta_\mu:H(\mu)\to H$ by
 \[
 \Delta_\mu[\varphi](z)=\varphi(\mu z).
 \]
 Clearly, $\Delta_\mu$ is always bounded. Functions in $H(\mu)$ \emph{gain analyticity}
 through $\Delta_\mu$, and the inverse $\Delta_\mu^{-1}$ is also bounded but \emph{destroys analyticity}. Following the proof of
 Lemma \ref{lem:bonito}, as a corollary of Lemma \ref{lem:unpo} we have the following result.
 \begin{coro} Let $\theta^*$ be a fixed point solution and
 $\boldsymbol{\lambda}=(\lambda_1,\ldots,\lambda_m)$
 be a stable non-resonant vector of eigenvalues.
 Let $e$ be the greatest integer $e\in\mathbb{Z}_+$ such that $\lambda^e$ divides $\mathcal{F}(\lambda)$.

 Suppose that $0<\mu\leq
\min\{|\lambda_1|^e,\ldots,|\lambda_m|^e\}$. Then $D\Phi(0)^{-1}$ is a bounded linear operator $D\Phi(0)^{-1}:H\to H(\mu)$.
In addition,
 the composition $\Delta_\mu D\Phi(0)^{-1}:H\to H$ is bounded and
 \begin{equation}\label{eq:estimate1}
 \left\|\Delta_\mu D\Phi(0)^{-1}\right\|<C,
 \end{equation}
 where $C$ is any constant that satisfies
 \eqref{eq:estimate0} in Lemma \ref{lem:unpo}.
 \end{coro}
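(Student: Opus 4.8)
The plan is to imitate the proof of Lemma~\ref{lem:bonito}, replacing the uniform bound on $\|T(\boldsymbol{\lambda}^\alpha)^{-1}\|$ by the weighted bound \eqref{eq:estimate0} from Lemma~\ref{lem:unpo}. First I would observe that $D\Phi(0)$ acts diagonally on the monomial coefficients: by Lemma~\ref{lem:frechet}, if $\varphi\in H$ has coefficients $\varphi_\alpha$ then $[D\Phi(0)\varphi]_\alpha = T(\boldsymbol{\lambda}^\alpha)\varphi_\alpha$ for $|\alpha|\ge 2$. Since $\boldsymbol{\lambda}$ is non-resonant, each $T(\boldsymbol{\lambda}^\alpha)$ is invertible for $|\alpha|\ge 2$, so the only candidate for $D\Phi(0)^{-1}\varphi$ is the function $\eta$ with coefficients $\eta_\alpha = T(\boldsymbol{\lambda}^\alpha)^{-1}\varphi_\alpha$; injectivity of $D\Phi(0)$ on $H$ is immediate from the same formula. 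The real content is to show $\eta$ lands in $H(\mu)$ and that the map is bounded.

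Next I would carry out the norm estimate. For $\varphi\in H$ with $\|\varphi\|_1 = \sum_{|\alpha|\ge 2}\|\varphi_\alpha\|_\infty < \infty$, apply \eqref{eq:estimate0} to get $\|\eta_\alpha\|_\infty \le C\,|\boldsymbol{\lambda}^\alpha|^{-e}\,\|\varphi_\alpha\|_\infty$. Then
\begin{equation*}
\|\eta\|_\mu = \sum_{k=2}^\infty\sum_{|\alpha|=k}\mu^k\|\eta_\alpha\|_\infty
\le C\sum_{k=2}^\infty\sum_{|\alpha|=k}\left(\frac{\mu}{|\boldsymbol{\lambda}^\alpha|^{\,e/k}}\right)^k\|\varphi_\alpha\|_\infty .
\end{equation*}
The hypothesis $0<\mu\le \min\{|\lambda_1|^e,\ldots,|\lambda_m|^e\}$ is exactly what is needed to control the bracketed factor: for a multi-index $\alpha$ with $|\alpha|=k$ one has $|\boldsymbol{\lambda}^\alpha| = \prod_i |\lambda_i|^{\alpha_i} \ge \bigl(\min_i |\lambda_i|\bigr)^{k}$, hence $|\boldsymbol{\lambda}^\alpha|^{e} \ge \bigl(\min_i |\lambda_i|^{e}\bigr)^{k} \ge \mu^{k}$, so $\mu^{k}\,|\boldsymbol{\lambda}^\alpha|^{-e}\le 1$. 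Therefore $\|\eta\|_\mu \le C\sum_{|\alpha|\ge 2}\|\varphi_\alpha\|_\infty = C\|\varphi\|_1$, which shows $D\Phi(0)^{-1}:H\to H(\mu)$ is well defined and bounded by $C$.

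Finally I would deduce the statement about $\Delta_\mu D\Phi(0)^{-1}$. Since $\Delta_\mu:H(\mu)\to H$ is bounded with $\|\Delta_\mu\|\le 1$ (it replaces $\mu^k\|P_\alpha\|_\infty$ by $\mu^k\|P_\alpha\|_\infty$ in passing from the $H(\mu)$-norm to the $H$-norm of $\varphi(\mu\cdot)$, i.e. it is norm-nonincreasing by the very definition of the two norms), the composition $\Delta_\mu D\Phi(0)^{-1}:H\to H$ is bounded with $\|\Delta_\mu D\Phi(0)^{-1}\|\le \|\Delta_\mu\|\cdot\|D\Phi(0)^{-1}\|\le C$, and one may take for $C$ any constant satisfying \eqref{eq:estimate0}. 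I expect the main (though modest) obstacle to be the bookkeeping in the chain of inequalities for $\|\eta\|_\mu$ — in particular being careful that the bound $|\boldsymbol{\lambda}^\alpha|^{e}\ge\mu^{|\alpha|}$ holds uniformly over all $\alpha$ with $|\alpha|\ge 2$, which is where the precise choice of the threshold for $\mu$ enters; everything else is a direct transcription of the argument in Lemma~\ref{lem:bonito} with the weighted norm $\|\cdot\|_\mu$ in place of $\|\cdot\|_1$.
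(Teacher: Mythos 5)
Your proposal is correct and follows exactly the route the paper intends: the paper gives no separate proof of this corollary, stating only that it follows ``following the proof of Lemma~\ref{lem:bonito}, as a corollary of Lemma~\ref{lem:unpo},'' which is precisely the coefficientwise argument you carry out, with the key point being the inequality $|\boldsymbol{\lambda}^\alpha|^{e}\ge\bigl(\min_i|\lambda_i|^{e}\bigr)^{|\alpha|}\ge\mu^{|\alpha|}$ that makes the weighted norm absorb the singular factor. The only cosmetic discrepancy is that you obtain $\|\Delta_\mu D\Phi(0)^{-1}\|\le C$ rather than the strict inequality in the statement, which is immaterial.
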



\subsection{Dependence on parameters}
\label{sec:parameters}

Suppose that
the difference equation depends smoothly on $q$ parameters
that, for simplicity belong to an open set $\mathcal{E}\subset\mathbb{R}^q$ around the origin.
A special difficulty
arises when there is a parameter in which the equation
becomes singular. We would like to regularize the singular limit.

\begin{example}\label{singFK}
Consider the Lagrangian $S$ defined on $\mathbb{R}^3$ and given by
\[
 S(\theta_{0},\theta_{1},\theta_{2})=
\frac\varepsilon2\left(\theta_{2}-\theta_0\right)^2+\frac12\left(\theta_{1}-\theta_0\right)^2+ W(\theta_0),
\]
where $\varepsilon$ is a small parameter.
From this, we get the Euler-Lagrange equation expressed
as \eqref{eq:Euler-Lagrange}.
If $W'(0)=0$, then the point $\theta^*=(0,0)$ is a fixed point solution.
The characteristic polynomial for this point is
\[
\mathcal{F}(\lambda)=
\varepsilon\lambda^{4}+\lambda^{3}-\left(2\varepsilon+2+W''(0)\right)\lambda^{2}+ \lambda +\varepsilon.
\]
According to Definition~\ref{def:nsc}, the fixed point $\theta^*=(0,0)$ is non-singular if $\mathcal{F}(0)\neq0$ and
this happens if and only if $\varepsilon\neq0$.
\end{example}
\bigskip

As before, let $X=A_1(\mathbb{C}^{m},\mathbb{C}^{d})$.
As an assumption, suppose that there is an open set
$\mathcal{U}\subset X$
such that the equation $\Phi:\mathcal{E}\times \mathcal{U}\to\mathbb{C}^d$ is analytically
 defined. Suppose that $\theta^*=0\in\mathcal{U}$ is a fixed point solution
for all parameters.
On $\mathcal{E}\times \mathcal{U}$,
we define the \emph{nonlinearity} at $\theta^*$ as
\[
\mathcal{N}(\varepsilon,P)=\Phi(\varepsilon,P)-D_2\Phi(\varepsilon,\theta^*)P,
\]
for all $(\varepsilon,P)\in\mathcal{E}\times \mathcal{U}$.

\begin{remark}In general,
for each value of $\varepsilon\in\mathcal{E}$, the corresponding characteristic
polynomial $\mathcal{F}_{\varepsilon}(\lambda)$ is of the form
$\mathcal{F}_{\varepsilon}(\lambda)=\lambda^{e(\varepsilon)}g_{\varepsilon}(\lambda)$, where $e(\varepsilon)$
is an integer that depends on $\varepsilon$ and $g_{\varepsilon}(0)\neq0$. The function $e(\varepsilon)$
does not need
to be continuous and this constitutes a potential difficulty. However, the
theorem below only requires that one can find a constant $\mu$ for which
 the matrices $\mu^{|\alpha|}T(\boldsymbol{\lambda}(0)^\alpha)^{-1}$ are uniformly bounded and
 the nonlinearity $\mathcal{N}$ vanishes at higher order.
\end{remark}

The solution of the stable manifold problem is a local issue, so
we can consider $\mathcal{U}$ and $\mathcal{E}$ as
small neighborhoods that not necessarily cover the largest possible domain
for $\Phi$. Now we can prove a more general theorem that includes parameters.

\begin{theo}\label{thm:parameters}
Let $\Phi$, $\theta^*$, $\mathcal{N}$, $\mathcal{E}$ and $\mathcal{U}$
as above. Suppose that
there exists analytic functions $\lambda_i:\mathcal{E}\to\mathbb{C}$, $v_i:\mathcal{E}\to\mathbb{C}^d$, for $i=1,\ldots,m$ and
constants $C,\mu>0$ such that the following conditions are satisfied, for all $\varepsilon\in\mathcal{E}$.
\begin{enumerate}
\item Each $\lambda_i(\varepsilon)$ is a
non-resonant eigenvalue with eigenvector $v_i(\varepsilon)$.
\item $\boldsymbol{\lambda}(\varepsilon)=(\lambda_1(\varepsilon),\ldots,\lambda_m(\varepsilon))$ is a stable
non-resonant vector of eigenvalues.
\item $\left\|T(\boldsymbol{\lambda}(0)^\alpha)^{-1}\right\|<C\mu^{-|\alpha|}$, for all multi-indices such that $|\alpha|\geq2$.
\item There exists an open neighborhood of the origin $\mathcal{U}_0\subset \mathcal{U}$ such that
the operator $\mathcal{R}(\varepsilon,P)=\mathcal{N}\left({\varepsilon},\Delta_{\mu}^{-1}P\right)$
 can be defined as a function $\mathcal{R}:\mathcal{E}\times \mathcal{U}_0\to X$.
\end{enumerate}
For each $\varepsilon\in\mathcal{E}$, let $V(\varepsilon)=(v_1(\varepsilon)\cdots v_m(\varepsilon))$ and
$\Lambda(\varepsilon) = \diag(\lambda_1(\varepsilon), \cdots, \lambda_m(\varepsilon))$.
Then, there exist a function $P_{\varepsilon}(z)$, analytic in $z$ and $\varepsilon$ such that
$P_{\varepsilon}(0)=0$, its
derivative at $z= 0$ is $\partial_zP_{\varepsilon}'(0)=V(\varepsilon)$ and
\[
Z_{\varepsilon}\left( P_{\varepsilon}(z), P_{\varepsilon}(\Lambda(\varepsilon) z), 
\ldots, P_{\varepsilon}( \Lambda(\varepsilon)^N z ) \right) \equiv 0,
\]
for all $z$ in a neighborhood of the origin.
The solution is unique among the solutions of the equation.
\end{theo}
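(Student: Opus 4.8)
The plan is to reduce Theorem~\ref{thm:parameters} to a single application of the implicit function theorem in Banach spaces, exactly as in the proof of Theorem~\ref{thm:main}, but now carried out in the rescaled space $H$ where the operator $\Delta_\mu D\Phi(0)^{-1}$ is bounded (Corollary following Lemma~\ref{lem:unpo}), and with the parameter $\varepsilon$ carried along as an extra variable. First I would set up the unknowns: after the change of origin giving the normalizations \eqref{eq:normalization}, write the sought parameterization as $P_\varepsilon = V(\varepsilon)\cdot z + P^>$ where $P^>$ vanishes to second order, and then absorb the scaling through $\Delta_\mu$ so that we look for a function $Q = \Delta_\mu^{-1}P^>$ that lives in $H=H(1)$. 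Concretely I would define, in analogy with \eqref{eq:modified},
\[
\Psi(\varepsilon,\tau,Q) \;\equiv\; \Delta_\mu\,\bigl(D_2\Phi(0,\theta^*)\bigr)^{-1}\,\Phi\bigl(\varepsilon,\ \tau\cdot V(\varepsilon) + \Delta_\mu^{-1}Q\bigr),
\]
which, thanks to hypothesis (c) (the bound on $\|T(\boldsymbol\lambda(0)^\alpha)^{-1}\|$ giving boundedness of $\Delta_\mu D\Phi(0)^{-1}$ on $H$) and hypothesis (d) (the rescaled nonlinearity $\mathcal{R}$ maps into $X$), is a well-defined analytic operator $\Psi:\mathcal{E}\times B_\delta(\mathbb{R}^m)\times \mathcal{U}_0 \to H$ on a neighborhood of $(0,0,0)$, and satisfies $\Psi(0,0,0)=0$.

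Next I would compute the partial derivative $D_Q\Psi(0,0,0)$. Since $\Delta_\mu^{-1}Q$ vanishes to second order, the linear-in-$Q$ part of $\Phi$ is $D\Phi(0)\,\Delta_\mu^{-1}Q$; composing with $\Delta_\mu(D\Phi(0))^{-1}$ gives precisely the identity on $H$, using Lemma~\ref{lem:frechet} and the diagonal action $[D\Phi(0)\varphi]_\alpha = T(\boldsymbol\lambda^\alpha)\varphi_\alpha$ together with the definition of $\Delta_\mu$. (The only subtlety is that $V(\varepsilon)$ depends on $\varepsilon$, so one must check the nonlinearity $\mathcal{R}$, evaluated along $\tau\cdot V(\varepsilon)+\Delta_\mu^{-1}Q$, still vanishes to order $\ge 2$ in $z$ for every $\varepsilon$ — which is exactly what hypothesis (d) is designed to guarantee.) Hence $D_Q\Psi(0,0,0)=\mathrm{Id}_H$ is trivially invertible with bounded inverse. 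The implicit function theorem in Banach spaces then produces, for $\varepsilon$ near $0$ and $\tau$ near $0$, a unique small solution $Q=Q_{\varepsilon,\tau}\in H$ depending analytically on $(\varepsilon,\tau)$ with $\Psi(\varepsilon,\tau,Q_{\varepsilon,\tau})=0$; since $\Delta_\mu(D\Phi(0))^{-1}$ is injective this is equivalent to $\Phi(\varepsilon,\tau\cdot V(\varepsilon)+\Delta_\mu^{-1}Q_{\varepsilon,\tau})=0$.

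Finally I would unwind the changes of variables. Fixing any $\tau$ with positive entries (say $\tau$ small and fixed, or letting $\tau$ vary to obtain a whole family differing by the trivial rescalings of Section~\ref{sec:simplification}), set
\[
P_\varepsilon(z) \;=\; \bigl(\tau\cdot V(\varepsilon)\bigr)\,z \;+\; \bigl(\Delta_\mu^{-1}Q_{\varepsilon,\tau}\bigr)(z),
\]
and then post-compose with $\diag(\tau_1,\dots,\tau_m)^{-1}$ exactly as at the end of the proof of Theorem~\ref{thm:main} to restore $V(\varepsilon)$ as the linear part; the resulting $P_\varepsilon$ is analytic in $z$ on a disk of radius $r=\min_i\tau_i \cdot(\text{radius gained from }\Delta_\mu)>0$, analytic in $\varepsilon$ because the IFT gives joint analyticity, satisfies $P_\varepsilon(0)=0$, $\partial_z P_\varepsilon(0)=V(\varepsilon)$, and solves the invariance equation for $Z_\varepsilon$. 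Uniqueness among solutions of the equation follows from the uniqueness clause of the IFT combined with the fact that any genuine solution has its jet determined order-by-order by the recursion \eqref{eqn:nnn} (valid here with $T(\boldsymbol\lambda^\alpha)^{-1}$ controlled by hypothesis (c)), so two solutions with the same linear part agree as formal series and hence, being analytic, coincide.

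I expect the main obstacle to be bookkeeping rather than conceptual: verifying carefully that hypothesis (d) genuinely makes $\mathcal{R}(\varepsilon,P)=\mathcal{N}(\varepsilon,\Delta_\mu^{-1}P)$ a bounded analytic map $\mathcal{E}\times\mathcal{U}_0\to X$ — in particular that the loss of analyticity caused by $\Delta_\mu^{-1}$ is compensated by the higher-order vanishing of $\mathcal{N}$ in $z$ uniformly in $\varepsilon$, and that composing back with $\Delta_\mu$ lands us in $H$ with the norm bounds controlled by the single constant $\mu$ coming from the $\varepsilon=0$ polynomial (hypothesis (c)). The potential discontinuity of $e(\varepsilon)$ flagged in the remark before the theorem is precisely the point where one must rely on a uniform $\mu$ chosen from $\mathcal{F}_0$ rather than from each $\mathcal{F}_\varepsilon$; checking that this uniform choice still dominates $\|T(\boldsymbol\lambda(\varepsilon)^\alpha)^{-1}\|$ for small $\varepsilon$ (or arranging that only the $\varepsilon=0$ bound is actually needed in the contraction, as the hypotheses suggest) is the delicate step.
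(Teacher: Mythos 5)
Your proposal is correct and follows essentially the same route as the paper: the same decomposition $P=\tau\cdot V(\varepsilon)+P^>$ with the rescaling $\Delta_\mu$, the same reliance on the corollary to Lemma~\ref{lem:unpo} and on hypothesis (d) to control the linearization and the nonlinearity, the same implicit function theorem with $(\tau,\varepsilon)$ as parameters, and the same final change of scale. The only (cosmetic) difference is that you precondition $\Phi$ by $\Delta_\mu D\Phi(0)^{-1}$ so that $D_Q\Psi(0,0,0)=\mathrm{Id}_H$, whereas the paper writes $\Psi$ as $D\Phi_0(0)\Delta_\mu^{-1}Q^>+\mathcal{R}(\cdot)$ and invokes invertibility of that linear part directly; also note the small slip in your setup, where you announce $Q=\Delta_\mu^{-1}P^>$ but then use $P^>=\Delta_\mu^{-1}Q$ (the latter, matching the paper, is what your formulas require).
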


\proof For simplicity, let the fixed point solution be $\theta^*=0$.
We will use the notation $\Phi_{\varepsilon}=\Phi(\varepsilon,\cdot)$.

Let $H$ as in \eqref{eq:espacioH}.
The main step of the proof is to show that there exists $\delta>0$ and an
analytic function $(\tau,\varepsilon)\mapsto Q^>_{\tau,\varepsilon}$ defined for all 
$\|\tau\|_\infty+\|\varepsilon\|_\infty<\delta$ such that
\[
\Phi\left(\varepsilon,\tau\cdot V(\varepsilon)+Q^>_{\tau,\varepsilon}\right)\equiv0,
\] with
$Q^>_{\tau,\varepsilon}\in H$ and the solution is unique provided $\|Q^>_{\tau,\varepsilon}\|_1<\delta$.

Now, since $0\in\mathcal{U}_0$, there exists an open set $\mathcal{V}\subset \mathbb{R}^m\times\mathcal{E}\times H$ 
that contains $(0,0,0)$ such that
if $(\tau,\varepsilon,Q^>)\in\mathcal{V}$ then
\(
\tau\cdot(\mu V(\varepsilon))+Q^>\in\mathcal{U}_0.
\)
From equation \eqref{eq:estimate1}, we get that
$D\Phi(0)^{-1}$ is a bounded linear operator $D\Phi(0)^{-1}:H\to H(\mu)$ and
$\Delta_\mu D\Phi_0(0)^{-1}:H\to H$ is uniformly bounded.

Let $\mathcal{R}(\varepsilon,P)=\mathcal{N}\left({\varepsilon},\Delta_{\mu}^{-1}P\right)$.
Notice that, if $(\tau,\varepsilon,Q^>)\in\mathcal{V}$ then $\mathcal{R}({\varepsilon},\tau\cdot(\mu V(\varepsilon))+Q^>)\in H$. 
Let $\Psi:\mathcal{V}\to H$ be the operator defined by
\[
\Psi(\tau,\varepsilon,Q^>)= D\Phi_{0}(0)\Delta_\mu^{-1}Q^>+\mathcal{R}({\varepsilon},\tau\cdot(\mu V(\varepsilon))+Q^>).
\]
We notice that $\Psi(0,0,0)=0$ and $D_3\Psi(0,0,0)=D\Phi_{0}(0)\Delta_\mu^{-1}$, that by construction
is invertible with bounded inverse.
Using the implicit function theorem of Banach spaces,
 we can find $\delta>0$ and a function $(\tau,\varepsilon)\mapsto Q^>_{\tau,\varepsilon}\in H$ 
defined for $\|\tau\|_\infty+\|\varepsilon\|_\infty<\delta$ such that
$\Psi(\tau,\varepsilon,Q^>_{\tau,\varepsilon})=0$ and the solution is unique if $\|Q^>_{\tau,\varepsilon}\|<\delta$. 
For each such $(\tau,\varepsilon)$, define
$P^>_{\tau,\varepsilon}=\Delta_\mu^{-1}Q^>_{\tau,\varepsilon}$.
This implies that
\[
D\Phi_{\varepsilon}(0)P^>_{\tau,\varepsilon}+\mathcal{N}(\varepsilon,\tau\cdot V(\varepsilon) +P^>_{\tau,\varepsilon})\equiv 0.
\]
Since $D\Phi_{\varepsilon}(0)V(\varepsilon)\equiv 0$, we have that
$P_{\tau,\varepsilon}=\tau\cdot V(\varepsilon) +P^>_{\tau,\varepsilon}$ is a solution and satisfies
\[
\Phi_{\varepsilon}(P_{\tau,\varepsilon})=\Phi_{\varepsilon}(\tau\cdot V(\varepsilon) +P^>_{\tau,\varepsilon})\equiv 0.
\]
The proof is finished as in the proof of Theorem \ref{thm:main}, with
a change of variables.
Suppose that $P^>_{\tau,\varepsilon}$ is a solution such that the vector $\tau=(\tau_1,\ldots,\tau_m)$ satisfies
$\|\tau\|_\infty<r$ and its entries are positive. Let
\[
P_{\varepsilon}=V(\varepsilon)+P^>_{\tau,\varepsilon}\circ\diag(\tau_1,\ldots,\tau_m)^{-1}.
\]
Then $P_{\varepsilon}(0)=0$, $\partial_zP_{\varepsilon}'(0)=V(\varepsilon)$, and
has a radius of convergence $r$, where $r$ is given by
\[
r=\min\{\tau_1,\ldots,\tau_m\}.
\]
\qed

\section{Examples of numerical algorithms}
\label{sec:numerics}

In this section we describe efficient numerical methods to compute
the parameterizations $P$ described in the previous sections.
We will present algorithms for
\begin{enumerate}
\item[\textbf{A.}] Standard map model with several harmonics.
\item[\textbf{B.}] Frenkel-Kontorova models with long range interactions.\footnote{We will have the C code available in the web.}
\item[\textbf{C.}] Heisenberg $XY$ models.
\item[\textbf{D.}] Invariant manifolds in Froeschl\'{e} maps
\end{enumerate}

For simplicity, the invariant manifolds are $1-$dimensional.
System \textbf{A} is a twist map, \textbf{B} contains a singular limit,
\textbf{C} does not define a map. In \textbf{D} is a $4-$dimensional and
we study both strong stable and slow invariant manifolds.


\subsection{Standard map model with $K-$harmonics}
\label{Chirikov-k}

Let $C_1,\ldots,C_K$ be given numbers. Consider the Lagrangian
$S:\mathbb{R}^2\to\mathbb{R}$ given by
$S(\theta_0,\theta_1)=\frac12(\theta_0-\theta_1)^2+W(\theta_0)$, with
\[
W(\theta)=-\sum_{j=0}^K \frac{C_j}j \cos (j\theta).
\]
The the corresponding
Euler-Lagrange difference equation can be written as $Z(\theta_0,\theta_1,\theta_2)\equiv0$, where
$Z:\mathbb{R}^3\to\mathbb{R}$ is the function given by
\(
Z(\theta_0,\theta_1,\theta_2)=\theta_2-2\theta_1+\theta_0-W'(\theta_1).
\)

Many authors have treated the original ($K=1$) standard map, also known as Chirikov model.
The bi-harmonic model ($K=2$) was studied by \cite{baesens94,calleja06}.
The parameterization problem to solve is
\begin{equation}\label{FKmap}
P(\lambda^{2} z) - 2P(\lambda z)+P(z)
- \sum_{j=0}^K C_j \sin (jP(\lambda z)) =0,
\end{equation}
where $\lambda$ solves
\(
\mathcal{F}(\lambda)=\lambda^2 +\left(- 2 + \sum_{j=0}^K jC_j\right)\lambda +1=0,
\)
and $|\lambda|<1$.
Sometimes, it is more convenient to write
 \eqref{FKmap} as
\begin{equation}\label{FKmap2}
P(\lambda z) - 2P( z)+P(\lambda^{-1}z)
- \sum_{j=0}^K C_j \sin (jP(z)) =0.
\end{equation}
 We solve \eqref{FKmap2} by equating coefficients
of like terms.
The orders 0 and 1 are special.
Clearly, we can take $P_0=0$.
This corresponds to choosing the fixed point $\theta^*=0$.
Equating terms of first order in \eqref{FKmap2}, we obtain:
\begin{equation}\label{eq:firstorder}
\left( \lambda + \lambda^{-1} - 2 + \sum_{j=0}^K j\,C_j\right)P_1 =0.
\end{equation}
Since we choose $\lambda$ so that the term in parenthesis vanishes, we obtain
that $P_1$ is arbitrary. Once we have chosen $\lambda$ so that it solves the
quadratic equation, any $P_1$ will lead to a solution of \eqref{eq:firstorder}.

Any choice of $P_1$ is equivalent from the mathematical point of view
 as they correspond to the choice of scale of the parameterization.
 However, from the numerical point of view it is convenient to choose $P_1$ in
such a way that the subsequent coefficients have comparable sizes so that
the round of error is minimized.
In practice, to find a good choice of $P_1$ we perform a trial run of low
order which gives an idea of the exponential growth or (decay) of the
coefficients $P_n$ and then fix $P_1$ so that the coefficients
$P_n$ neither grow nor decay too much.

The equation for $\lambda$ is quadratic.
The product of its roots is 1 so, when
\[
\left|-2 +\sum_{j=1}^K j\,C_j\right|>2,
\]
we get two roots $\lambda_1$ and $\lambda_2$
of the characteristic polynomial
$\mathcal{F}$ such that $|\lambda_1| < 1$, $|\lambda_2| >1$. We choose
the stable eigenvalue $\lambda_1$.

Since $\lambda^n + \lambda^{-n} - 2 + \sum_{j=1}^K j\,C_j \ne0$
($\lambda^n$ is not a root of $\mathcal{F}$), we get
\begin{equation}\label{eq:solutionn}
P_n = \left( \lambda^n + \lambda^{-n} -2 + \sum_{j=1}^K j\,C_j \right)^{-1}
\left[ \sum_{j=1}^K C_j \sin \left(j P^{\leq\,(n-1)}\right)\right]_n.
\end{equation}
Note that the right hand side can be evaluated if we know $P^{\leq\,(n-1)}$ and hence
we can recursively compute $P_n$. In each step, the coefficients can be found using algorithms
explained below which will be also used in other sections.

\subsection{Efficient evaluation of trigonometric functions}
\label{sec:efficient}
Given a series, $P(z) = \sum_{n=0}^\infty P_n z^n$, we often want to compute
the power series expansions of $\sin (P(z))$ and $\cos (P(z))$.
The following algorithm is taken from \cite{Knuth97}.
Denote $S(z) = \sin (P(z))$ and $C(z) = \cos (P(z))$.
Then, we have
\begin{equation}\label{eq:knuthtrick}
S^\prime (z) = C(z) P^\prime (z),\qquad
C^\prime (z) = -S (z) P^\prime (z).
\end{equation}

Suppose that we can write these functions as
$S(z) = \sum_{n=0}^\infty S_n z^n$ and
$C(z) = \sum_{n=0}^\infty C_n z^n$.
For each $n\in\mathbb{N}$, we will denote
$S^{\leq\,n}(z) = \sum_{k=0}^n S_{k} z^k$,
$C^{\leq\,n}(z) = \sum_{k=0}^n C_{k} z^k$ and
$P^{\leq\,n}(z) = \sum_{k=0}^n P_{k} z^k$. Also,
$[\cdot]_n$ will represent the coefficient of order $n$ of
an analytic function.

Equating terms of order $n$ in \eqref{eq:knuthtrick}, we obtain
\begin{align}\label{recursion}
(n+1) S_{n+1} =& \quad\sum_{j=0}^n C_{n-j} (j+1) P_{j+1},\\
(n+1) C_{n+1} =& -\sum_{j=0}^n S_{n-j} (j+1) P_{j+1}.\nonumber
\end{align}
The recursion \eqref{recursion} allows to compute
the pair of coefficients $S_{n+1}$, $C_{n+1}$
provided that we know the coefficients $S_0,\ldots,S_n$ and $C_0,\ldots,C_n$.
We note that, obviously $S_0$, $C_0$ are straightforward to compute.
From this, we also make the obvious observation that
\begin{align}\label{observation}
S_{n+1} & = \quad C_0 P_{n+1} +\frac1{n+1} \left[\cos\left(P^{\leq\, n}\right)
Q^{\leq\,n}\right]_{n+1},\\
C_{n+1} & = -S_0 P_{n+1} - \frac1{n+1}\left[\sin\left(P^{\leq\, n}\right)
Q^{\leq\,n}\right]_{n+1},\nonumber
\end{align}
where $Q^{\leq\,n}(z)=\sum_{k=0}^n (k+1)P_{k+1} z^k$. In particular,
if $P_0=0$, then we conclude that
\begin{align*}
 \left[S\right]_{n+1}&=P_{n+1}+\left[\sin\left(P^{\leq\,n}\right)\right]_{n+1},\\
\left[C\right]_{n+1}&=\left[\cos\left(P^{\leq\,n}\right)\right]_{n+1}.
\end{align*}

This recursion allows us to get the expansion to order $n$ of $\sin (P(z))$ and $\cos (P(z))$,
given the expansion of $P$ to order $n$.
Furthermore, we observe that if we change $P_n$ --the
coefficient of order $n$ of $P$-- this only affects the coefficients of $\sin (P(z))$ and $\cos (P(z))$ of order $n$ or higher.

The practical arrangement of the calculation of the coefficients
in the standard map with $K$ harmonics is to keep different polynomials
$S_\ell^{\leq\,n}$ and $C_\ell^{\leq\,n}$ that correspond to the series expansions of $\sin(\ell P)$ and $\cos(\ell P)$ up to order $n$.
If the polynomial $P$ is computed to order $n-1$ and the $S_\ell^{\leq\,n}$ and $C_\ell^{\leq\,n}$
corresponding to $P^{\leq\,(n-1)}$ are computed up to order~$n$, we
can compute the coefficient $P_n$ using \eqref{eq:solutionn}.
Then, we can compute the corresponding $S_\ell^{\leq\,(n+1)}$ and $C_\ell^{\leq\,(n+1)}$ up to order $n+1$ using \eqref{recursion}.
We note that similar algorithms can be deduced for $e^{P(z)}$, $\log P(z)$, $P(z)^\gamma$ or
indeed the composition of $P$ with any function that solves a simple differential equation.

\subsection{Frenkel-Kontorova model with extended interactions}
\label{FKgeneral}

\subsubsection{Set up}
Consider the Frenkel-Kontorova model with long range interactions. A
particle interacts not only with its nearest neighbors, but with
other neighbors that are far away. Let $N\geq 2$. We consider
the Lagrangian function $S:\mathbb{R}^{N+1}\to\mathbb{R}$ given by
\[
 S(\theta_{0},\ldots,\theta_{N})=
\frac12\sum_{L=1}^N\gamma_{L}\left(\theta_{L}-\theta_0\right)^2+ W(\theta_0).
\]

The corresponding Euler-Lagrange equilibrium equations are expressions of
$2N+1$ variables, that in this case have the form:
 \[ \sum_{L=1}^N\gamma_{L}\left(\theta_{k+L}- 2\,\theta_{k}+ \theta_{k-L}\right) - W'(\theta_{k})=0.
\]
These equations represent a difference equation of order $2N$.

Suppose that $W'(0)=0$. Then the system has a fixed point solution at $0$ and
the corresponding characteristic polynomial is of the form $\mathcal{F}(\lambda)=\lambda^N\mathcal{L}(\lambda)$,
 where
\[
\mathcal{L}(\lambda)=
\sum_{L=1}^N\gamma_{L}\left(\lambda^{L}-2+ \lambda^{-L} \right)- W''(0).
\]
In addition, the one-dimensional parameterization equations of the point can be written as
\[
\sum_{L=1}^N\gamma_{L}\left(P(\lambda^{N+L}z)- 2\,P(\lambda^{N}z)+ P(\lambda^{N-L}z)\right) - W'(P(\lambda^{N}z))=0,
\]
where $\lambda$ is a non-resonant stable root of the characteristic function $\mathcal{L}$.

\begin{remark}
We can simplify the characteristic polynomial above.
Notice that, if we let
$ \omega=(\lambda+\lambda^{-1})/2$,
then
\[
 \frac{\lambda^{L}+\lambda^{-L}}2=\mathcal{T}_{L}(\omega),
\]
where $\mathcal{T}_{L}$ is the $L-$th Tchebychev polynomial.
Let
 $r(\omega)$ be the polynomial of degree $N$ given by
\begin{equation}\label{eq:first}
 r(\omega)=\sum_{L=1}^{N}\gamma_L(\mathcal{T}_{L}(\omega)-1)-\frac12W''(0).
\end{equation}
Then, characteristic polynomial $\mathcal{F}(\lambda)$, can be written as
\(
 \mathcal{F}(\lambda)=2\lambda^N\,r\left( (\lambda+\lambda^{-1})/2\right).
\)
In addition, $\mathcal{F}(\lambda)$ has no zeroes on the unit circle if and only if
$r(\omega)$ has no roots on the segment $[-1,1]\subset \mathbb{C}$. For each root
$\omega$ of $r$, we get a pair of eigenvalues. If $\omega$ is real and $|\omega|>1$, then these
eigenvalues are a pair of real numbers 
\[
\lambda^{s,u}=\omega\pm\sqrt{\omega^2-1}
\]
that satisfy
$0<|\lambda^s|<1<|\lambda^u|$.
\end{remark}

\subsubsection{Singular limit and slow manifolds}

In many situations the long-range
interactions of the particles in the model are small. We could ask the question of what happens in
the limit. It turns out that the system becomes singular and the usual dynamical systems approach fails
to be useful. However, certain stable manifolds persist, as in Theorem~\ref{thm:parameters}.
We illustrate this difficulty with an example.

\begin{example}
Consider a Frenkel-Kontorova equation with $\gamma_1=1$ and $\gamma_2=\varepsilon$.
In this case, the auxiliary polynomial in \eqref{eq:first} is
\[
r(\omega)=\varepsilon(2\omega^2-2)+\omega-\beta,
\]
where $\beta=1+\frac12W''(0)$.

Solving for $\omega(\varepsilon) $ , we get that \[
\omega^\pm(\varepsilon)=\frac{2(\beta+2\varepsilon)}{1\pm\sqrt{1+8\varepsilon(\beta+2\varepsilon)}}.
\]
If $\varepsilon\to 0$ then we have a singular limit. We notice that, as $\varepsilon\to 0$,
the two roots of the polynomial have two different limits $\omega^+(\varepsilon)\to\beta$ and
$\omega^-(\varepsilon)\to\infty$. In terms of the stability of the fixed point, the limit
$\omega^-(\varepsilon)\to\infty$ corresponds to a pair of eigenvalues $\lambda^{s},\lambda^{u}$ that
are very hyperbolic in the sense that $\lambda^{s}\lambda^{u}=1$ and
$\lambda^{s}\to0$ and $\lambda^{u}\to\infty$ as $\varepsilon\to0$.

Fortunately, the other pair of eigenvalues can be continued through the
singularity $\varepsilon=0$. This family is smooth and will be denoted by
$\lambda^s(\varepsilon)$, $\lambda^u(\varepsilon)$. They satisfy
$0<\lambda^s(\varepsilon)<1<\lambda^u(\varepsilon)$, $\lambda^s(\varepsilon)\lambda^u(\varepsilon)=1$ and
$\lambda^s(0)+\lambda^u(0)=2\beta$.
\end{example}

\begin{remark}
In general, if we let $\beta=1+W''(0)/(2\gamma_1)$, then
 there is a family of roots $\omega$ of $r(\omega)$ such that $\omega\to\beta$ as
 $(\gamma_2,\ldots,\gamma_N)\to 0$. It follows that, if $|\beta|>1$ and the coefficients
 $\gamma_2,\ldots,\gamma_N$ are small enough, then the fixed point $\theta^*=0$ is hyperbolic.
 This occurs, for instance, when $0$ is a minimum of the potential $W$, $\gamma_1>0$, and the
 long range interactions are weak.
\end{remark}

We are interested in the persistence of slow manifolds in the Frenkel-Kontorova model with long-range
interactions. We can consider that the interactions are small. Suppose that we have
$N$ long-range interactions represented by small coefficients
$\gamma_2(\varepsilon),\ldots,\gamma_N(\varepsilon)$ that depend
analytically on the parameter $\varepsilon$.
Assume that $\gamma_2(0)=\cdots=\gamma_N(0)=0$,
$\gamma_N^\prime(0)\neq 0$ and, without loss of generality,
that $\gamma_1(\varepsilon)\equiv 1$.

For each $\varepsilon$, the characteristic polynomial $\mathcal{F}_\varepsilon$ of
the fixed point $\theta^*=0$ is of degree at most $Nd$. From the implicit
function theorem, we can argue that there exists a number $\varepsilon_0$>0 and a
smooth function
$\omega(\varepsilon)$ such that, if $|\varepsilon| \leq\varepsilon_0$ then
$r_\varepsilon(\omega(\varepsilon))\equiv0$, where $\omega(0)=\beta$ and
\[
r_\varepsilon(\omega)=\sum_{L=1}^{N}\gamma_L(\varepsilon)(\mathcal{T}_{L}(\omega)-1)-\frac12W''(0).
\]
All the other roots of $r_\varepsilon(\omega)$ diverge as $\varepsilon\to\infty$.
For each non-singular root $\omega(\varepsilon)$ of $r_\varepsilon$, we get the following stable
eigenvalue
\[
\lambda^{s}(\varepsilon)=\omega(\varepsilon)-\sqrt{\omega(\varepsilon)^2-1}.
\]
The family $\lambda^{s}(\varepsilon)$ can be continued through the singularity
$\varepsilon=0$ and, for all $|\varepsilon| \leq\varepsilon_0$, it corresponds to
a slow manifold, i.e. the invariant manifold with the largest stable eigenvalue.
In fact, $\lambda^{s}(\varepsilon)$ is analytic near $\varepsilon=0$.

For each $|\varepsilon| \leq\varepsilon_0$, there exists a non-negative
integer $e(\varepsilon)$ such that
$\mathcal{F}_\varepsilon(\lambda)=\lambda^{e(\varepsilon)}g_\varepsilon(\lambda)$. By construction,
$e(\varepsilon)=0$ if $\varepsilon\neq0$ and $e(\varepsilon)=N-2$ if $\varepsilon=0$.
Using the notation of Theorem \ref{thm:parameters}, we have that the nonlinearity
of the parameterization operator at $\theta^*$ is
precisely
\[
[\mathcal{N}(\varepsilon,P)](z)=W''(0)P\left(\lambda^{s}(\varepsilon)^Nz\right)-W'\left(P(\lambda^{s}(\varepsilon)^Nz)\right).
\]

Let $\mu$ be a number such that
\[
\lambda^{s}(0)^N<\mu<\lambda^{s}(0)^{N-2}.
\]
Restricting $\varepsilon_0$ further, it is possible to assume that $\mu$ also
satisfies $\lambda^{s}(\varepsilon)^N<\mu<\lambda^{s}(\varepsilon)^{N-2}$,
for all $|\varepsilon| \leq\varepsilon_0$.
Using Lemma~\ref{lem:unpo}, we conclude that there exists a constant $C$ such that
the matrix norm satisfies
\[
\|\lambda^{s}(0)^{(N-2) k}T(\lambda^{s}(0)^k)^{-1}\|\leq C,
\]
 for all $k\geq2$. Furthermore, we have that
\[
\|\mu^{k}T(\lambda^{s}(0))^{-1}\|
\leq C\mu^{k}\lambda^{s}(0)^{-(N-2) k}<C,
\]
for all $k\geq 2$.
Now, the condition on the nonlinearity is that
$\mathcal{R}\left(\varepsilon,P\right)=\mathcal{N}\left(\varepsilon,\Delta_{\mu}^{-1}P\right)\in X$, for all $P\in X$ 
in a neighborhood of the
origin. However, the nonlinearity satisfies
\[
\left[\mathcal{R}\left(\varepsilon,P\right)\right](z)=
\left[\mathcal{N}\left(\varepsilon,\Delta_{\mu}^{-1}P\right)\right](z)=
W''(0)P\left(\mu ^{-1}\lambda^{s}(\varepsilon)^Nz\right)-W'\left(P(\mu ^{-1}\lambda^{s}(\varepsilon)^Nz)\right).
\]
Since $|\mu ^{-1}{\lambda^{s}(\varepsilon)^N}|<1$, we conclude that there exists an open neighborhood $\mathcal{U}_0$
of $0$ in which the nonlinearity can be defined as an operator.
From these considerations we conclude that the Theorem \ref{thm:parameters} applies.
Therefore, there exists a family of analytic solutions $P_\varepsilon$ of the parameterization problem.
This is illustrated in the numerical example that follows.

\subsubsection{Some numerics}
If we consider the $K-$harmonic potential
\(
W(\theta) = -\delta\,\sum_{j=1}^K \frac{C_j}{j}\ \cos (j\theta)
\), then the equilibrium equations are
\begin{equation}\label{eq:invarianceextended}
\sum_{L=1}^N \gamma_L (\theta_{k+L} - 2\theta_{k}+ \theta_{k-L} )
+ \delta\,\sum_{j=1}^K C_j \sin (j\theta_{k}) = 0.
\end{equation}
The parameterization equations of a $1-$dimensional stable manifold
can be written as
\begin{equation}\label{eq:paramFK}
\left[\Phi(P)\right](z)=\sum_{L=1}^N \gamma_L (P( \lambda^L z) -2P(z)+ P(\lambda^{-L} z) )
+\delta\, \sum_{j=1}^K C_j \sin (jP(z)) =0,
\end{equation}
where $\lambda$ is a non-resonant stable eigenvalue. By symmetry, if $P$
parameterizes a stable manifold it also parameterizes an unstable one.
The characteristic polynomial is $\mathcal{F}(\lambda)=\lambda^N\mathcal{L}(\lambda)$,
 where
\[
\mathcal{L}(\lambda ) = \sum_{L=1}^N \gamma_L (\lambda^L + \lambda^{-L} -2)
+ \delta\sum_{j=1}^K j\,C_j.
\]
We will find a stable parameterization $P$  corresponding to the fixed point solution $0$.  Suppose that $P$ is of the form
$P(z)=\sum_{k=0}^\infty z^kP_k$.  We set, therefore,
$P_0=0$.
Also, we chose a solution $\lambda$ of $\mathcal{L}(\lambda) =0$, which
amounts to choosing the stable manifold we want to study, and set $P_1$ so that the numerical error
is minimized.
When $n\ge2$, matching coefficients of $z^n$ in \eqref{eq:paramFK},
we obtain
\begin{equation}\label{eq:extendedn}
\mathcal{L}(\lambda^{n+1}) P_{n+1} +\delta\, \left[\sum_{j=1}^K C_j \sin\left( j P^{\leq\,n}\right)\right]_{n+1} =0.
\end{equation}
For a generic set of values of $\gamma_1,\ldots,\gamma_N$ and
$C_1,\ldots,C_K$, we have that $\mathcal{L}(\lambda^{n+1}) \neq 0$, for all
$n\in\mathbb{N}$. Therefore, we can solve the equation \eqref{eq:extendedn} and
get a non-resonant eigenvalue.

We keep the polynomials $P$, $S^j$, $C^j$ as in Section \ref{sec:efficient} and assume that we know
$P^{\leq\,(n-1)}$ and the $S^j,C^j$ corresponding to $P^{\leq\,(n-1)}$ up
to order~$n$. We use \eqref{eq:extendedn} to compute $P_n$ and then
\eqref{recursion} to compute the $S^j,C^j$ corresponding to $n$ up to
order~$n-1$.
The only difference with the short range case is that, when solving the
recursion, we need to divide by a slightly different
factor.

\begin{figure}[ht]
\begin{center}
\includegraphics[height=3.4in]{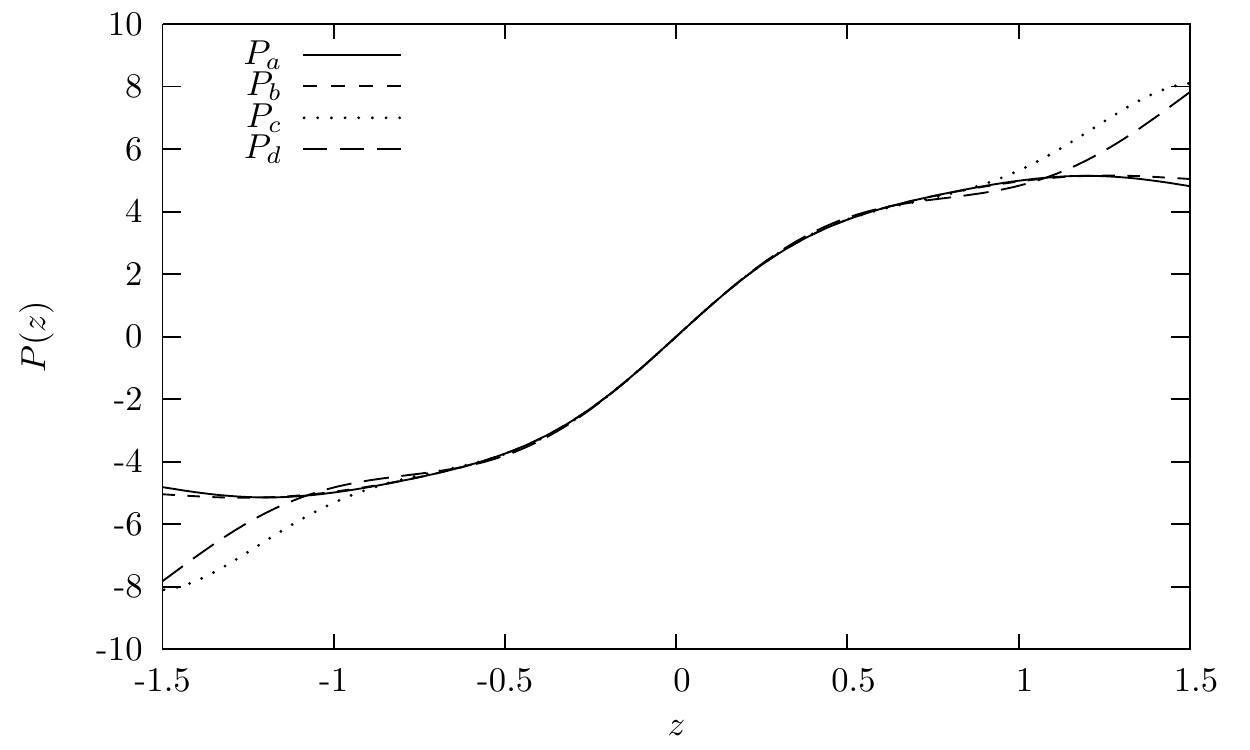}
\caption{Four parameterizations for the Frenkel-Kontorova model
of example \ref{exa:xxx} with parameters given in table \ref{table:FK}.}\label{fig:FK1}
\end{center}
\end{figure}

\begin{figure}[ht!]
\begin{center}
\includegraphics[height=3.4in]{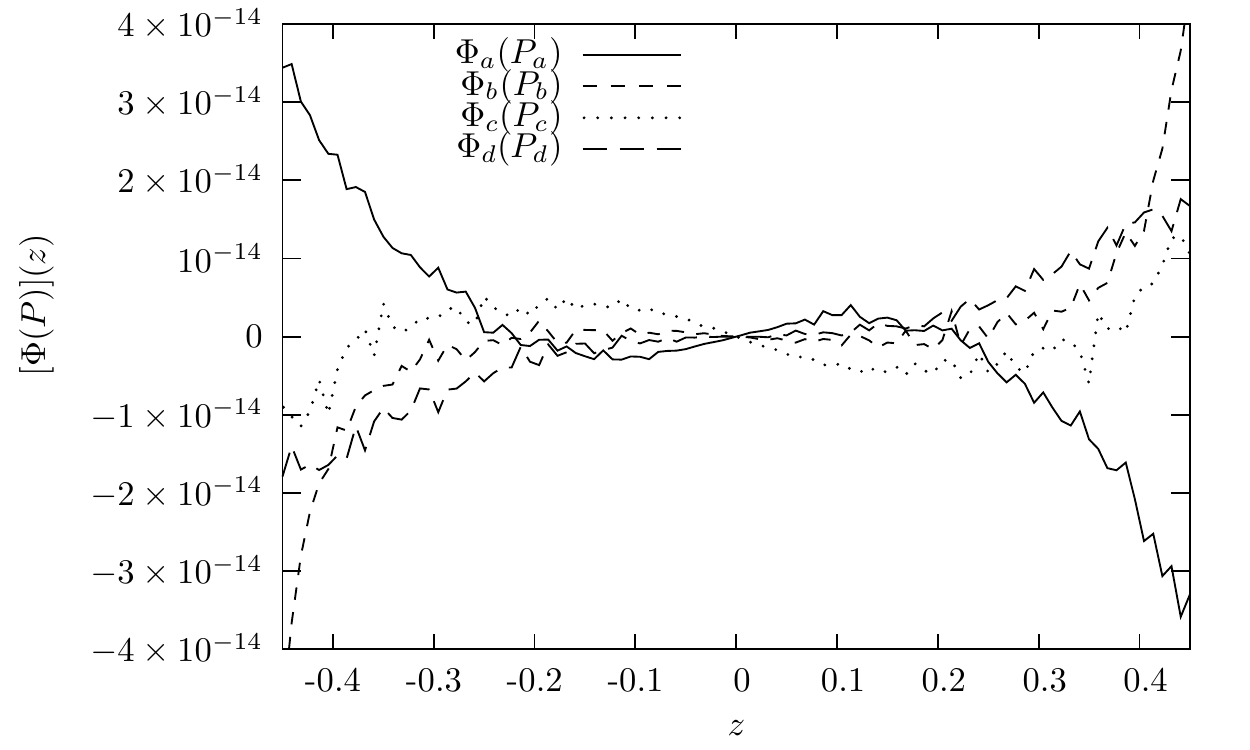}
\caption{Error function $\Phi(P)$ for the approximation to the parameterization solution of
the Frenkel-Kontorova model
of example \ref{exa:xxx} with the parameters given in table \ref{table:FK}.}\label{fig:FK2}
\end{center}
\end{figure}

\begin{figure}[hb!]
\begin{center}
\includegraphics[height=3.4in]{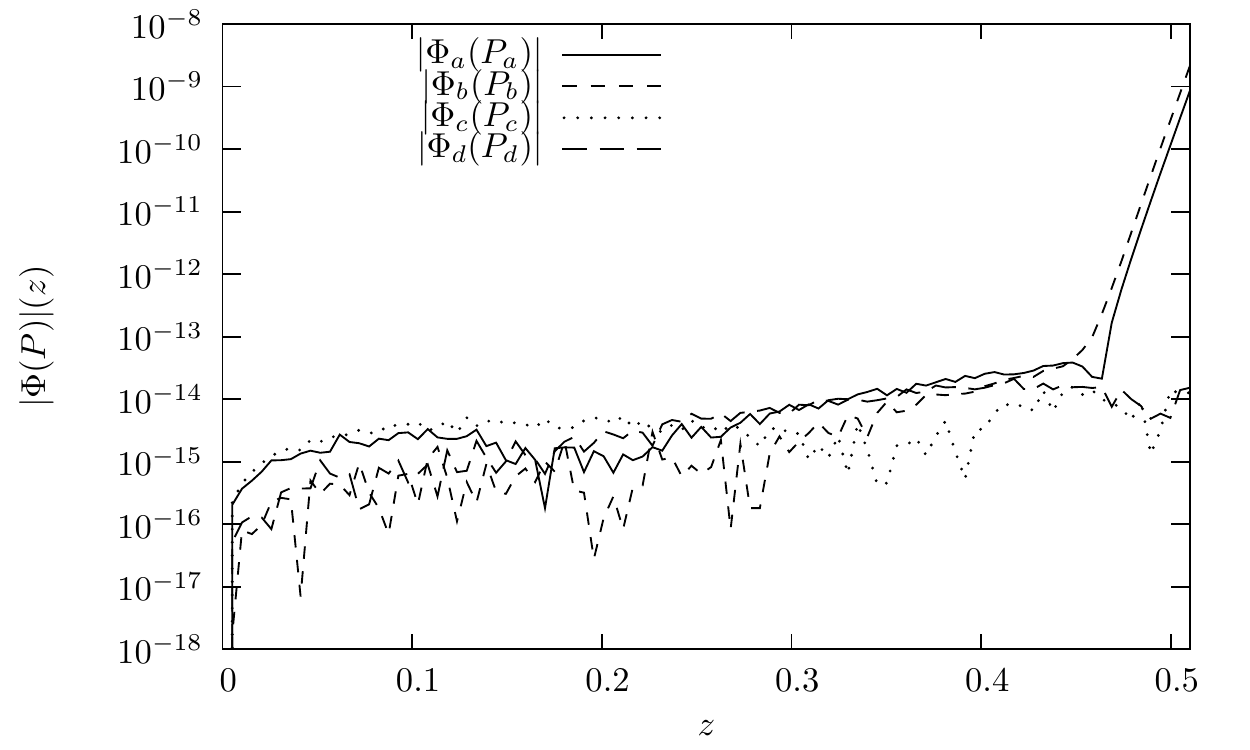}
\caption{These graphs illustrate the absolute value $|\Phi(P)|$ of the error function for the Frenkel-Kontorova model
of example \ref{exa:xxx} with parameters given in table \ref{table:FK}. Logarithmic scale is used in the vertical axis.}\label{fig:FK3}
\end{center}
\end{figure}

\begin{example}\label{exa:xxx}
Consider an specific singular limit.
Let the Frenkel-Kontorova model with $N=3$, $K=1$, $\delta=0.4$, and
$C_1=1$. We  fix the values of $\gamma_1,\gamma_2,
\gamma_3$ in four examples.

Using the algorithm proposed of this section,
we find the first $100$ coefficients of the Taylor series expansion
of the parameterizations corresponding
to the following values. In each case, the computed
eigenvalue $\lambda$ corresponds to the slow manifold.
\begin{table}[ht]
\centering
\begin{tabular}{|c|c|c|c|c|c|c|c|}
 \hline
 Parameterization & $\gamma_1$ & $\gamma_2$ & $\gamma_3$ &$\delta$ & $P'(0)$ & $\lambda$\\
 \hline
 $P_a$ & 1 & 0.1 & 0.00 & 0.4 & 10.0 &  0.592583231399561\\
 $P_b$ & 1 & 0.14 & 0.00  & 0.4 & 10.0 &  0.609158827181520\\
 $P_c$ & 1 & 0.1 & 0.01  &  0.4 & 10.0 & 0.603202338024902\\
  $P_d$ & 1 & 0.1 & 0.03  & 0.4 & 10.0 & 0.621569001269222 \\
 \hline

\end{tabular}
\caption{Parameters for the Frenkel-Kontorova examples.}
\label{table:FK}
\end{table}

The solution of the parameterization problem \eqref{eq:paramFK} is, in fact, a family of
functions that depend on the size of the derivative.  We find uniqueness only when
the derivative $P'(0)$ is fixed. This value is determined so that the coefficients
are of order 1. As input we use the parameters given in table \ref{table:FK} and,
as output, we get the approximation $P^{\leq 100}$.

In the example, parameterizations of the slow manifold are computed.
The dimension of the  problem changes but the method allows the continuation of the solution
through the singularity. In other words, if we regard the difference equation as a dynamical system,
then cases $a$ and $b$ would be maps in $\mathbb{R}^4$ and   cases $c$ and $d$ would be maps
in $\mathbb{R}^6$. This collapse in the dimension is problematic if one uses a dynamical
system point of view, but is manageable when the Lagrangian point of view is used.

We notice that the four parameterization functions
$P_a$, $P_b$, $P_c$, and $P_d$ are similar for small
values of $z$. However, the difference equation
is singular for the parameters of the first  and second examples. The numerical results are
illustrated in Figure~\ref{fig:FK1}. In Figures~\ref{fig:FK2} and \ref{fig:FK3}, we can
see an approximation to the value of $\Phi(P)$ near $z=0$. In each case, we provide
a graph of $\Phi(P^{\leq 100})$. These graphs  quantify the error in the approximation.

\end{example}

\subsection{The Heisenberg $XY$ model}\label{XYmodel}

Consider the difference equation mentioned in the introduction and
given by \eqref{eq:xy}. The characteristic polynomial
of the fixed point $\theta^*=0$ is
\(
\mathcal{F}(\lambda) = \lambda^2 -(2 +\varepsilon)\lambda+1.
\)
The corresponding parameterization equations can be written as
\begin{equation}\label{Heisenberg}
\sin (P(\lambda z) - P(z)) + \sin (P(\lambda^{-1} z) - P(z))
- \varepsilon\sin P(z) =0,
\end{equation}
where $\lambda$ is a stable root of $\mathcal{F}$.

Equating terms of order $n$ in \eqref{Heisenberg} we obtain that,
for $n=0$, the choice of $P_0=0$ corresponds to choosing the fixed point solution
we are studying. The
term of order $n=1$ amounts to choosing the manifold and setting the numerical scale
at which we are working.

The equations obtained matching order $n\ge2$ are
\begin{align}\label{XYorder n}
 \mathcal{F}(\lambda^{n+1}) P_{n+1} &=\left[ \sin \left(P^{\leq\,{n}} ( z) - P^{\leq\,{n}} (\lambda z)\right)\right]_{n+1}\\
 \nonumber
&+\left[ \sin \left(P^{\leq\,{n}} (z) - P^{\leq\,{n}} (\lambda^{-1} z)\right)\right]_{n+1}+
\varepsilon\left[ \sin \left(P^{\leq\,{n}} (z)\right)\right]_{n+1}.
\end{align}
The only difference with the Chirikov model or standard map is that we also have
to compute
$\bar S(z) = \sin (P(z) - P(\lambda z))$ and $\bar C(z) = \cos (P(z) - P(\lambda z))$.
Of course,
$\bar S(\lambda^{-1} z) = \sin (P(\lambda^{-1} z) - P(z))$ and $\bar C(\lambda^{-1} z) = \cos (P(\lambda^{-1} z) - P(z))$.
In the inductive step we assume that we know $P^{\leq\,(n-1)}$ and the
$S,C,\bar S,\bar C$ corresponding to
$P^{\leq\,(n-1)}$ up to order~$n$.
Using \eqref{XYorder n}, we can compute $P_n$ and then use \eqref{recursion} to compute $S,C,\bar S,\bar C$
corresponding to $P^{\leq\,n}$ up to order $n+1$ and the induction
can continue.

\subsection{Non resonant invariant manifolds in Froeschl\'{e} maps}
\label{sec:froeschle}

The Froeschl\'{e} map is a popular model
\cite{Froeschle72,Olvera94} of higher dimensional twist maps.
It is designed to be a model of the behavior of a double resonance.
 In the Lagrangian formulation, the Lagrangian of the model is a function
 $S:\mathbb{R}^2\times\mathbb{R}^2\to \mathbb{R}$ that is given by
 \[ S(\theta_0,\theta_1)=\frac12(\theta_1-\theta_1)^2+W(\theta_0), \]
 where $W:\mathbb{R}^2\to \mathbb{R}$
 is a potential functions such that $W(\theta+k)=W(\theta)$, for all
 $k\in\mathbb{Z}^2$.
 The resulting Euler-Lagrange
 equations are given by
 \begin{equation}\label{eq:froeschle}
 \theta_{k+1}-2\theta_{k}+\theta_{k-1}+\nabla W(\theta_{k})=0.
 \end{equation}
 If $\nabla W(0)=0$, then $\theta^*=0$ is a fixed point solution
 and the characteristic function is
 \[ \mathcal{L}(\lambda)=\det\left((\lambda+\lambda^{-1}-2)I+D^2W(0)\right). \]
 The particular example used by Froeschl\'{e} is
 \[ W(x_1,x_2)=a\cos(2\pi x_1)+b \cos(2\pi x_2)+c \cos(2\pi(x_1-x_2)). \]
 The matrix $I-\frac12D^2W(0)$ is a $2\times2$ symmetric matrix.
 Typically, it has two real eigenvalues $\omega_1$ and $\omega_2$.
 It turns out that there are four roots of $\mathcal{L}(\lambda)$,
 that constitute the spectrum of the fixed point. They are given by the
 solutions of \[ \lambda+\lambda^{-1}-2\omega_i=0. \] It is easy to see
 that these four solutions are given by $\omega_i\pm\sqrt{\omega_i^2-1}$,
 for $i=1,2$. From this, we conclude that the solutions are of the form $\lambda_1,\lambda_2,\lambda_1^{-1},\lambda_2^{-1}$, where
 $|\lambda_1|\leq|\lambda_2|\leq1$. We have to consider three possibilities:
 \begin{multicols}{3}
    \begin{enumerate}
 \item \label{casea}
 $0<\lambda_1\leq \lambda_2<1$.
 \item \label{caseb} $0<\lambda_1<1$, $|\lambda_2|=1$.
 \item \label{casec} $|\lambda_1|=|\lambda_2|=1$.
 \end{enumerate}
 \end{multicols}

The classical theory of invariant manifolds allows to associate an invariant
one dimensional manifold with $\lambda_1$ in cases
\ref{casea}) and \ref{caseb}), and a two dimensional invariant manifold
in case \ref{casea}). See also \cite{Llave97} for the case $0<\lambda_1=\lambda_2<1$.

The parameterization method allows also to make sense of each manifold
tangent to the space corresponding to $\lambda_2$ provided $\lambda_1^k\neq \lambda_2$ and
$\lambda_2^k\neq \lambda_1$, for $k\geq2, k\in\mathbb{N}$. The calculations
are remarkably similar to those of the stable manifolds for
the Chirikov  map studied before.

We now indicate the algorithm. Since we are considering the fixed point
$\theta^*=0$, we make $P_0=0$. As before, the
first coefficient of the parameterization is an eigenvector
associated with the eigenvalue $\lambda_2$. Again, we note that
the size of $P_1$ corresponds to different scales of the parameterization,
and does not affect the mathematical considerations. On the other hand,
choosing an appropriate scale is crucial in order to minimize round-off errors.

When $W$ is a trigonometric polynomial, as in the original
Froeschl\'{e} model, we can compute the components of
\(
\left[\nabla W\left(P^{\leq n-1}\right)\right]_n.
\)
We conclude that $n-$th coefficient of the parameterization satisfies
\begin{equation}\label{eq:recursion}
\left(\left(\lambda^n+\lambda^{-n}-2\right)I+D^2W(0)\right)P_n=
-\left[\nabla W\left(P^{\leq n-1}\right)\right]_n.
\end{equation}


\section{The continuously differentiable case}

The approach to invariant manifolds of this paper, also
applies when the map $Z$ is finite differentiable.
Many of the techniques presented above can be adapted 
to the finite differentiable case. Here, we will just present 
an analogue of Theorem~\ref{thm:main}, but we leave to the reader
other issues such as singular limits or dependence on parameters. 

\begin{theo}\label{finite-diff} 
Assume that $Z$ is $C^{r+1}$. 
Let 
 $\Lambda=\diag(\lambda_1,\ldots,\lambda_m)$,
where $\boldsymbol{\lambda}=(\lambda_1,\ldots,\lambda_m)$ is a stable
non-resonant vector. 
Assume 
that $B_0$ is invertible and 
that 
$r$ is sufficiently large, depending only on $\Lambda$. 
Then, we can find a $C^r$  $P$ solving. 
\eqref{eq:invariancelinear}, $P(0) = 0$  and such that the range of 
$P_1$ is the space corresponding to the eigenvalues in $\Lambda$. 
\end{theo}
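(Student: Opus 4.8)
The plan is to follow the architecture of the proof of Theorem~\ref{thm:main}: first produce a polynomial approximation of high enough order, then correct it to an exact solution by a contraction argument — but now working in a Banach space of $C^r$ maps on a small ball rather than in a space of analytic functions. The hypothesis $\det B_0\neq 0$ is used twice. First, by the classical (finite–dimensional) implicit function theorem, the relation $Z(y_0,y_1,\dots,y_N)=0$ is, near the origin, equivalent to $y_0=G(y_1,\dots,y_N)$ for a unique $G\in C^{r+1}$ with $G(0)=0$ and $\partial_jG(0)=-B_0^{-1}B_j$; hence \eqref{eq:invariancelinear} is equivalent to the fixed–point equation $P=\mathcal{T}(P)$, where $\mathcal{T}(P)(z):=G\bigl(P(\Lambda z),\dots,P(\Lambda^N z)\bigr)$. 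Second, $\det B_0\neq 0$ together with non-resonance shows, exactly as in Lemma~\ref{lem:bonito} — using $T(\boldsymbol{\lambda}^\alpha)\to B_0$ as $|\alpha|\to\infty$ and $\det T(\boldsymbol{\lambda}^\alpha)=\mathcal{F}(\boldsymbol{\lambda}^\alpha)\neq 0$ for $|\alpha|\ge 2$ — that the linearization $D\Phi(0)$, equivalently $I-D\mathcal{T}(0)$, is boundedly invertible on the space of jets vanishing to order $2$.

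Next I would construct the polynomial preconditioner. Setting $P_1=V$ — this is the only place the tangent normalization enters, and it forces the range of $P'(0)$ to be the eigenspace attached to $\Lambda$, since $(V,\Lambda)$ solves \eqref{eq:linearrelation} — the recursion \eqref{eqn:nnn} of Section~\ref{higherorder} produces, purely algebraically, unique coefficients $P_\alpha=T(\boldsymbol{\lambda}^\alpha)^{-1}[\Phi(P^{\le n})]_\alpha$ for $2\le|\alpha|\le L$, where $2\le L\le r$ is to be fixed; this step uses only $Z\in C^{L}$ and the non-resonance of $\boldsymbol{\lambda}$. By Taylor's theorem applied to $\Phi(P^{\le L})=Z(P^{\le L},\dots)\in C^{r+1}$ — this is where the extra derivative, $Z\in C^{r+1}$ rather than $C^r$, is convenient — the resulting polynomial satisfies $|\Phi(P^{\le L})(z)|\le C|z|^{L+1}$ on a small ball, with analogous quantitative control on its lower-order derivatives.

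Now I would seek the solution as $P=P^{\le L}+R$ with $R$ in the Banach space $\mathcal{H}_\delta$ of $C^r$ maps on the closed ball $\bar B_\delta\subset\mathbb{R}^m$ vanishing to order $L+1$ at the origin, with the $C^r$ norm. Because $R\in\mathcal{H}_\delta$ and $P^{\le L}$ kills the jet to order $L$, the map $\mathcal{S}(R):=\mathcal{T}(P^{\le L}+R)-P^{\le L}$ sends $\mathcal{H}_\delta$ into itself (for $\delta$ small), and $R=\mathcal{S}(R)$ is equivalent to $P=\mathcal{T}(P)$. There is no loss of derivatives — a $C^{r+1}$ function composed with a $C^r$ one is $C^r$ — so an ordinary Banach contraction suffices, no Nash–Moser. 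The contraction estimate rests on two structural facts: (i) $\Lambda$ is a contraction, $\|\Lambda\|=\max_i|\lambda_i|<1$, so composition with each $\Lambda^j$ never increases $C^r$ norms and in fact multiplies the $s$-th derivative by $O(\|\Lambda\|^{js})$; and (ii) every element of $\mathcal{H}_\delta$, and every relevant increment, vanishes to order $L+1$, contributing extra smallness $O(\delta^{L+1-s})$ in the low-order derivatives. Balancing these, the Lipschitz constant of $\mathcal{S}$ on a small ball of $\mathcal{H}_\delta$ is bounded by something of order $C(\|\Lambda\|^{L+1}+\delta)$; taking $r$, and with it $L$, large enough to control the $\|\Lambda\|^{L+1}$ term, and then $\delta$ small, makes $\mathcal{S}$ a contraction. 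Banach's fixed point theorem gives $R$, hence $P=P^{\le L}+R\in C^r$, and uniqueness follows as in Theorem~\ref{thm:main} from the uniqueness clause of the fixed point. The main obstacle — and the reason the hypothesis that $r$ be large (depending on $\Lambda$) is needed — is precisely this last $C^r$ estimate: unlike the analytic category, where $D\Phi(0)^{-1}$ acts by the trivial rule $\varphi_\alpha\mapsto T(\boldsymbol{\lambda}^\alpha)^{-1}\varphi_\alpha$ and is automatically bounded on a Banach algebra of power series, here one must show by hand that the correction operator contracts in the $C^r$ topology; the finitely many resonant directions $2\le|\alpha|\le L$ are disposed of once and for all by the polynomial preconditioner via non-resonance, while the infinitely many remaining directions are controlled only because iterated contraction by $\Lambda$ dominates the finitely many derivatives being tracked, which fails if $r$ is too small relative to $\|\Lambda\|$. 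Everything else — the finite-dimensional implicit function theorem, the Taylor estimates, and the Faà di Bruno bookkeeping for the $C^r$ norm of compositions — is routine.
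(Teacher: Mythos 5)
Your proposal is correct and is essentially the paper's proof: a polynomial jet computed by the recursion of Section~\ref{higherorder} (non-resonance disposing of the finitely many low orders), followed by a contraction in a Banach space of $C^r$ maps vanishing to high order at the origin, where the contraction constant is beaten down by powers of $\|\Lambda\|<1$ acting on the tracked derivatives --- exactly the content of Lemma~\ref{lem:smooth} and the source of the requirement that $r$ be large depending on $\Lambda$. The only differences are cosmetic: you pre-solve $Z=0$ for $y_0$ by the finite-dimensional implicit function theorem to obtain an explicit fixed-point map $\mathcal{T}$, whereas the paper keeps $\Phi$ implicit and factors $D\Phi(0)=B_0(\Id+L)$ (note $D\mathcal{T}(0)=-L$, so the two linearizations coincide); and the paper takes the vanishing order equal to $r$ and uses the homogeneous norm $\sup\|D^rP\|$, which makes the contraction estimate cleaner than your mixed choice of vanishing order $L+1\le r$ with the full $C^r$ norm (there the derivatives of order strictly between $L+1$ and $r$ get neither the $\delta^{L+1-s}$ gain nor vanishing at $0$, and must be controlled by the factors $\|\Lambda\|^{js}$ alone, which is where your requirement that $L$, hence $r$, be large actually bites).
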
 

The conditions on  $r$ assumed in Theorem~\ref{finite-diff} 
will be made explicit in Lemma~\ref{lem:smooth}. 
It will be clear from the proof of 
Theorem~\ref{finite-diff}  that, if we assume more regularity
in $Z$, we can obtain differentiability with respect to parameters
(keeping $\Lambda$ fixed). 

We still study the equation $\Phi(P) = 0$
by implicit function methods but now $P$ ranges over a space of
finite differentiable functions. To apply the implicit function theorem,
we just need  the differentiability of the functional $\Phi$ and to
 show that $D\Phi(0)$ is invertible
with bounded inverse. One 
complication is that we cannot show 
 that $D\Phi(0)$ is boundedly invertible 
by matching powers. Nevertheless, we will show 
that $D\Phi(0)$ is invertible for 
for functions that vanish at 
high order. Hence, we will use that we 
can find power series solutions so that we can write
$P = P^< + P^\ge$ where $P^<$, is a polynomial we will assume known
and $P^\ge$ are functions vanishing to high order.

The functional analysis properties of $\Phi$ 
will be taken care of in the following lemmas. 

\begin{lemm}\label{Phidifferentiable} 
If $Z \in C^{r + \ell}$ $r, \ell \in \mathbb{N}$, then there
exists an open subset $\mathcal{U}\subset C^r(D,\mathbb{R}^d)$
such that
the operator $\Phi:\mathcal{U}\to C^r(D,\mathbb{R}^d)$  is $C^\ell$. 
Furthermore, if $\ell \ge 1$, we have 
\begin{equation} 
D\Phi(P) \varphi   = 
\sum_{j= 0}^N 
\partial_j Z\left( P, P\circ \Lambda, \ldots, P\circ  \Lambda^N \right) 
\varphi \circ \Lambda^j .
\end{equation} 
In particular, 
\begin{equation} 
\label{derivativeform} 
D\Phi(0) \varphi  = 
\sum_{j = 0}^n  B_j \left(\varphi\circ \Lambda^j \right)
\end{equation} 
where, as before,  $B_{i}=\partial_iZ(0,\ldots,0)$.
\end{lemm}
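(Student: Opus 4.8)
The plan is to reuse the factorization from the analytic case, writing $\Phi = \mathcal{C}_Z\circ\mathcal{G}$, where $\mathcal{G}(P)=(P,P\circ\Lambda,\ldots,P\circ\Lambda^N)$ and $\mathcal{C}_Z(g)=Z\circ g$ is the composition (Nemytskii) operator. Since $\boldsymbol{\lambda}$ is stable, after shrinking the disk $D$ we may assume $\Lambda^j(\overline{D})\subset D$ for all $0\le j\le N$; because each $\Lambda^j$ is linear, $P\mapsto P\circ\Lambda^j$ is then a bounded operator on $C^r(D,\mathbb{R}^d)$, so $\mathcal{G}$ is a bounded \emph{linear} map $C^r(D,\mathbb{R}^d)\to C^r\bigl(D,(\mathbb{R}^d)^{N+1}\bigr)$, hence $C^\infty$ with $D\mathcal{G}(P)\varphi=\mathcal{G}(\varphi)$. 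Fix $\rho>0$ so that $Z$ is $C^{r+\ell}$ on the sup-norm ball $B_\rho\subset(\mathbb{R}^d)^{N+1}$, and set $\mathcal{U}=\{P\in C^r(D,\mathbb{R}^d):\|P\|_{C^0}<\rho\}$; this is open, contains the normalization $P\equiv 0$, and for $P\in\mathcal{U}$ the map $g=\mathcal{G}(P)$ satisfies $g(D)\subset B_\rho$.

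The core step is a $C^r$ analogue of Lemma~\ref{lemabonito}: if $Z\in C^{r+\ell}(B_\rho)$, then $\mathcal{C}_Z$ maps $\{g\in C^r(D):g(D)\subset B_\rho\}$ into $C^r(D)$, is $C^\ell$ there, and $D\mathcal{C}_Z(g)\psi=(DZ\circ g)\,\psi$ (evaluated pointwise). I would prove this by induction on $\ell$. The base case $\ell=0$ amounts to continuity of $g\mapsto Z\circ g$ in $C^r$: expanding $\partial^\beta(Z\circ g)$, $|\beta|\le r$, by the higher-order chain rule writes it as a finite sum of products of $\partial^\gamma Z\circ g$ ($|\gamma|\le r$) with derivatives of $g$, and one concludes using uniform continuity of the $\partial^\gamma Z$ on compact subsets of $B_\rho$ together with boundedness of pointwise multiplication $C^0\times C^0\to C^0$. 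For the inductive step one writes
\[
\mathcal{C}_Z(g+\psi)-\mathcal{C}_Z(g)-(DZ\circ g)\,\psi=\int_0^1\bigl[(DZ\circ(g+t\psi))-(DZ\circ g)\bigr]\,\psi\,dt,
\]
notes that $DZ\in C^{r+(\ell-1)}$, so by the inductive hypothesis $g\mapsto DZ\circ g$ is continuous (indeed $C^{\ell-1}$) into $C^r$, and deduces that the right-hand side is $o(\|\psi\|_{C^r})$; hence $\mathcal{C}_Z$ is Fr\'echet differentiable with the stated derivative, and since that derivative is the composite of $g\mapsto DZ\circ g$ (which is $C^{\ell-1}$ by induction) with the bounded bilinear evaluation pairing, $\mathcal{C}_Z$ is $C^\ell$. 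This loss of one order of regularity per order of differentiability is exactly what forces $r$ to be taken large in Theorem~\ref{finite-diff}; carrying it out carefully is a standard $\omega$-lemma / superposition-operator argument (cf.\ the references in the appendix) and is the main technical point.

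Granting this, the chain rule applied to $\Phi=\mathcal{C}_Z\circ\mathcal{G}$ shows $\Phi:\mathcal{U}\to C^r(D,\mathbb{R}^d)$ is $C^\ell$, and for $\ell\ge 1$,
\[
D\Phi(P)\varphi=D\mathcal{C}_Z(\mathcal{G}(P))\,D\mathcal{G}(P)\varphi=\bigl(DZ\circ\mathcal{G}(P)\bigr)\mathcal{G}(\varphi)=\sum_{j=0}^N\partial_j Z\bigl(P,P\circ\Lambda,\ldots,P\circ\Lambda^N\bigr)\,(\varphi\circ\Lambda^j),
\]
which is the asserted formula for $D\Phi(P)$. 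Evaluating at $P\equiv 0$ and using $\partial_jZ(0,\ldots,0)=B_j$ gives $D\Phi(0)\varphi=\sum_{j=0}^N B_j(\varphi\circ\Lambda^j)$, i.e.\ \eqref{derivativeform}. A minor point to verify is that all the compositions appearing here indeed stay inside $D$ and $B_\rho$, which holds by the definition of $\mathcal{U}$ and the property $\Lambda^j(\overline{D})\subset D$.
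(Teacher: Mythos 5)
Your proposal is correct and follows essentially the same route the paper takes: view $\Phi=\mathcal{C}_Z\circ\mathcal{G}$ with $\mathcal{G}$ bounded linear, and reduce the whole lemma to the $C^\ell$ regularity of the composition operator $g\mapsto Z\circ g$ on $C^r(D,\mathbb{R}^d)$ plus the chain rule. The only difference is that the paper disposes of that regularity by citing \cite{obaya99}, whereas you supply the standard $\omega$-lemma induction on $\ell$ yourself; your argument is sound.
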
 

The Lemma~\ref{Phidifferentiable} is a 
direct consequence of  the results of \cite{obaya99} on the composition
 operator. Of course, the formula \eqref{derivativeform} is easy to guess 
heuristically by noticing that it is the leading term in changes in $P$. 
We emphasize that we are considering $\Lambda$ fixed. The differentiability 
properties of the operator with respect to $\Lambda$ are much more subtle.

Let $D=\mathbb{R}^m(\rho)$, be
the closed disk, in $\mathbb{R}^m$, around the origin of radius $\rho$. As before, $\|\cdot\|_\infty$
is the uniform norm  defined in \eqref{eq:uniformnorm}.
To study $C^r$ spaces, it is convenient to recall that higher-order derivatives are 
multilinear maps. Given a $C^r$ function $g:\mathbb{R}^m\to\mathbb{R}^d$ and $k\leq r$,
then the $k-$th
derivative $D^kg(a)$ is a  $k-$linear map
$D^kg(a):\mathbb{R}^m\times\cdots\times\mathbb{R}^m\to\mathbb{R}^d$, symmetric under permutation of 
the order of the arguments. Recall also  that there is a natural norm
on the vector space of $k-$linear maps $B$ given by
\[
\|B\|=\sup\{\|B(\xi_1,\ldots,\xi_{k})\|_\infty:\|\xi_1\|_\infty=\cdots=\|\xi_{k}\|_\infty=1\}.
\]
Once we fix a system of coordinates, $D^kg$ can be expressed in terms of 
the partial derivatives
\[
 \frac{1}{k!}D^kg(a)(h,\ldots,h)=\sum_{|\alpha|=k}\frac{1}{\alpha!}\partial^\alpha g(a)\,h^\alpha.
\]
The usual norm in $C^r(D,\mathbb{R}^d)$ spaces is $\|g\|_{C^r} = \max_{i \le r} \sup_{x\in D} \| D^i g(x)\|$.

To study our problem, 
we define the closed subspace of $C^r(D,\mathbb{R}^d)$. 
\[
   H_r=\left\{P \in C^r(D,\mathbb{R}^d):  D^i P(0)=0 \quad 0 \le i \le r-1\right\}.
\]
Clearly $P\in H_r$ if and only if $\partial^\alpha P(0)=0$, for all $ |\alpha|\leq r-1 $.
The space $H_r$ is a Banach space if endowed  with the norm 
$\|P\|'_r = \sup\left\{ \| D^r P(x)\|:x\in D\right\}$.
Because $P$ vanishes with its derivatives at $0$ and we are considering a bounded 
domain, this norm is equivalent to 
the standard $C^r$ norm.

Assume that $B_0$ is non-singular. 
Then, we can rewrite \eqref{derivativeform} as:
\begin{equation}\label{derivativegrouped} 
D\Phi(0)\varphi  = 
B_0\left[ \varphi + \sum_{j=1}^N B_0^{-1} B_j \left(\varphi \circ \Lambda^j\right)\right] 
=  B_0\left( \Id +  L\right)\varphi,
\end{equation} 
where $L$ is the operator defined by:
\[
L(\varphi)=\sum_{i=1}^NB_0^{-1}B_{j} \,\left(\varphi\circ  \Lambda^{j}\right).
\]

\begin{lemm}\label{lem:smooth}
    Let $\mu$ be a real number such that:
    \begin{enumerate}
     \item $\|\Lambda\|^r\leq \mu<1$,
      \item $\displaystyle \sum_{i=1}^N\mu^i\left\|B_0^{-1}B_{i}\right\|<1$.
    \end{enumerate}
    Then the linear operator $D\Phi(0):H_r\to H_r$ is invertible with bounded inverse.
\end{lemm}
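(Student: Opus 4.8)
The plan is to show that $L$ is a contraction on $H_r$, so that $\Id + L$ is invertible by the Neumann series, and then conclude that $D\Phi(0) = B_0(\Id + L)$ is invertible with bounded inverse because $B_0$ is a fixed invertible matrix. The work is therefore to estimate $\|L\|$ as an operator on $(H_r, \|\cdot\|'_r)$.

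First I would check that $L$ maps $H_r$ into itself. If $\varphi \in H_r$, then $\varphi \circ \Lambda^j$ vanishes together with its derivatives up to order $r-1$ at $0$ (the chain rule shows each derivative of $\varphi\circ\Lambda^j$ at $0$ is a combination of derivatives of $\varphi$ at $0$, all of which vanish), so each $B_0^{-1}B_j(\varphi\circ\Lambda^j)$ lies in $H_r$, and hence so does the finite sum. Moreover $\Lambda$ is a contraction (since $\boldsymbol\lambda$ is stable, $\|\Lambda\| = \max_i|\lambda_i| < 1$), so $\Lambda^j$ maps $D = \mathbb{R}^m(\rho)$ into itself and the compositions are well defined on $D$.

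Next comes the key estimate. For $\varphi \in H_r$ we compute $D^r(\varphi \circ \Lambda^j)(x)$; since $\Lambda^j$ is linear, the chain rule gives
\[
D^r(\varphi\circ\Lambda^j)(x)(\xi_1,\ldots,\xi_r) = D^r\varphi(\Lambda^j x)(\Lambda^j\xi_1,\ldots,\Lambda^j\xi_r),
\]
so that $\|D^r(\varphi\circ\Lambda^j)(x)\| \le \|\Lambda^j\|^r\,\|D^r\varphi(\Lambda^j x)\| \le \|\Lambda\|^{jr}\,\|\varphi\|'_r \le \mu^j\,\|\varphi\|'_r$, using hypothesis (a) that $\|\Lambda\|^r \le \mu$ and $j\ge 1$. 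Summing,
\[
\|L\varphi\|'_r \le \sum_{j=1}^N \|B_0^{-1}B_j\|\,\mu^j\,\|\varphi\|'_r,
\]
and hypothesis (b) says exactly that the coefficient on the right is strictly less than $1$. Hence $\|L\| < 1$ on $H_r$.

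Finally, $\Id + L$ is invertible on $H_r$ with $\|(\Id+L)^{-1}\| \le (1-\|L\|)^{-1}$ via the Neumann series $\sum_{k\ge 0}(-L)^k$, which converges in operator norm. Since $B_0$ is an invertible $d\times d$ matrix, multiplication by $B_0$ is a bounded invertible operator on $H_r$ (it acts pointwise and commutes with differentiation), so $D\Phi(0) = B_0(\Id+L)$ is invertible on $H_r$ with bounded inverse $(\Id+L)^{-1}B_0^{-1}$.

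The main obstacle is the derivative bound for the composition $\varphi\circ\Lambda^j$: one must be careful that the norm $\|\cdot\|'_r$ on $H_r$ (using only the top-order derivative) is the right object, and that the linearity of $\Lambda^j$ makes the chain rule produce only the single term $D^r\varphi(\Lambda^j x)(\Lambda^j\cdot,\ldots,\Lambda^j\cdot)$ with no lower-order clutter — this is what yields the clean factor $\|\Lambda\|^{jr}$ and is precisely why the hypotheses are phrased in terms of $\|\Lambda\|^r$. One should also remark why $\|\cdot\|'_r$ is equivalent to the full $C^r$ norm on $H_r$ (Taylor's theorem with integral remainder, using that the domain $D$ is bounded and all lower derivatives vanish at $0$), so that "bounded inverse" in the $\|\cdot\|'_r$ norm is the same statement in the standard $C^r$ norm.
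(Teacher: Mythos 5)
Your proposal is correct and follows essentially the same route as the paper: decompose $D\Phi(0)=B_0(\Id+L)$, use the chain rule for the linear map $\Lambda^j$ to get $\|\varphi\circ\Lambda^j\|'_r\leq\|\Lambda\|^{jr}\|\varphi\|'_r\leq\mu^j\|\varphi\|'_r$, and conclude that $L$ is a contraction so that $\Id+L$ is invertible by the Neumann series. The extra remarks you include (closedness of $H_r$ under $L$, invertibility of multiplication by $B_0$, and equivalence of $\|\cdot\|'_r$ with the full $C^r$ norm) are consistent with what the paper states around the lemma.
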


We emphasize that if $\boldsymbol{\lambda}$ is a stable non-resonant vector of eigenvalues, 
the conditions of Lemma~\ref{lem:smooth} are satisfied for sufficiently large $r$. 
This is the condition alluded to in Theorem~\ref{finite-diff}.

\begin{proof}
Since $D\Phi(0) = B_0^{-1}(\Id +L) $ 
it will suffice to prove that $L$ is a contraction
in the $\| \cdot\|'_r $ norm. 
We note  that if $\varphi \in H_r$, then 
$\varphi\circ \Lambda^j\in H_r$ and
 $L \varphi \in H_r$. 
In addition,
\[
\left[D^r\left(\varphi\circ \Lambda^j\right)(x)\right](\xi_1,\ldots,\xi_r)=
\left[\left(D^r\varphi\right)(\Lambda^jx)\right](\Lambda^j\xi_1,\ldots,\Lambda^j\xi_r).
\]
Hence, if $x\in D$ then
\[
 \left\|D^r\left(\varphi\circ \Lambda^j\right)(x) \right\|\leq 
 \left\| \left(D^r\varphi\right)(\Lambda^jx)\right\|\left\|\Lambda^j\right\|^r\leq
\left\| \left(D^r\varphi\right)(\Lambda^jx)\right\|\mu^j,
\]
and, since $\Lambda D\subset D$, we get
that $\|\varphi\circ\Lambda^j\|'_r\leq \mu^j\|\varphi\|'_r$.
From this we conclude 
\[ 
\| L (\varphi) \|'_r   \leq  \sum_{j=1}^N \left\|B_0^{-1}B_{i}\right\|\|\varphi \circ \Lambda^j\|'_r 
       \leq \left(\sum_{i=1}^N\mu^i\left\|B_0^{-1}B_{i}\right\|\right) \|\varphi\|'_r.
\]
This shows that $L$ is a contraction.
\end{proof}

To complete the proof of 
Theorem ~\ref{finite-diff}, 
we argue in a similar manner as 
in the applications of the implicit function theorem before.
Following the method in Section~\ref{higherorder}, and fixing
 $P_1$ to be an embedding on 
the space, we can find a unique polynomial $P^<$ 
of degree ${r-1}$ in such a way that $P^<(0) = 0$, $(P^<)'(0) = P_1$
and 
\(
D^j \Phi(P^<)(0) = 0,
\)
for all $0 \le j \le r-1$.
We recall that the coefficients of $P^<$ are obtained by matching coefficients.
In addition,
if we choose $P_1$ small, they will also be small. 

Furthermore, we also note that if $P^\ge \in H_r$, we have
that 
\[
D^j \Phi\left(P^< + P^\ge \right)(0) = 0; \quad 0 \le j \le r-1.
\]
The above remarks can be formulated in terms of functional analysis as saying 
that the operator 
$\tilde \Phi( P^\ge) = \Phi( P^< + P^\ge) $ maps 
an open subset of $H_r$ into $H_r$. 

If we take $P_1$ small without changing its range (this is the same as 
the change of scale that we considered before), we have that 
$\tilde \Phi(0)$ will be as small as desired, $D\tilde \Phi(0) = D\Phi(P^<)$ will 
approach $D\Phi(0)$ and, in particular will  be invertible. The differentiability properties
of $\tilde \Phi$ will remain uniformly differentiable. Hence, we can 
deduce Theorem~\ref{finite-diff} from the implicit function theorem in Banach spaces.

More explicitly, consider the fixed point 
problem for $P^\ge$ given by: 
\[
-D\Phi(0)^{-1} \Phi( P^< +  P^\ge) + P^\ge = P^\ge,
\]
as acting on $H_r$. When $P^<$ is chosen corresponding to a specific small $P_1$,
the left hand side will be a contraction and map a ball in $H_r$ near the origin into itself.


\section{Final comments}

We have studied invariant objects in the context of difference equations.
The goal was to find the analog of stable and unstable manifolds that
are associated to fixed point solutions of hyperbolic type. We generalized the
method proposed in \cite{Cabre2003i, Cabre2003ii, Cabre2005iii}
and used the implicit function theorem in Banach spaces to
perform this task.

The method is not only theoretical but can be implemented numerically
as it was shown in
example \ref{exa:xxx}. The method is robust in the sense that it can
deal with certain singular limits. In particular, slow manifolds
can be detected and approximated even in the presence of a singularity.

For future work we plan to consider the finite differentiable case.
Also we can use the tools to consider the case of conformally symplectic maps
and applications to economics. In addition, we showed that the
variational theory has a smooth dependence on parameters and is robust. 
We plan to explore variational formulations of Melnikov's theory.

\appendix
\bigskip

\section{Banach spaces of analytic functions}\label{sec:banach}

In this appendix we study spaces of analytic functions
(taking values and having range in Banach spaces) and,
in particular, the composition operator
$\mathcal{C}_f(g)= f \circ g $ between analytic
functions. We show that the composition operator is itself
analytic when defined in spaces of analytic functions.

We call attention to the paper \cite{Meyer75} which carried out
a similar study and showed that the operator $\Gamma(f, g) = f \circ g $ was $C^\infty$.
The paper \cite{Meyer75} showed also that many problems in
the theory of dynamical systems --invariant manifolds,
limit cycles, conjugacies-- could be reduced to problems involving the
composition operator. Using the result of analyticity presented here, most of the
regularity results in \cite{Meyer75} can be improved from $C^\infty$ to
analytic.

There are of course, specialized books which contain much more
material than we need. For example \cite{Hoffman,Nachbin}.

\subsection{Analytic functions in general Banach spaces}
Let $E$ and $F$ be Banach spaces.
We define $\mathcal{S}_{k}(E,F)$ as the linear space of bounded symmetric $k-$linear
functions from $E$ to $F$. For each $a_{k}\in \mathcal{S}_{k}(E,F)$,
the notation $a_{k}\left( x^{\otimes k}\right)$ denotes the
$k-$homogeneous function $E\to F$ given by
\[
a_{k}\left( x^{\otimes k}\right)=a_{k}\left( x,\ldots,x\right).
\]
On $\mathcal{S}_{k}(E,F)$, we require to have a norm $\|\cdot\|_{\mathcal{S}_{k}(E,F)}$ such that
\begin{equation}\label{def:des}
\|a_{k}(x_1,\ldots,x_{k})\|_F\leq\|a_{k}\|_{\mathcal{S}_{k}(E,F)}\|x_1\|_E\cdots\|x_{k}\|_E,
\end{equation}
for all $a_{k}\in\mathcal{S}_{k}(E,F)$.
In particular, this implies that
\[
\left\|a_{k}\left( x^{\otimes k}\right)\right\|_F\leq \|a_{k}\|_{\mathcal{S}_{k}(E,F)}\|x\|_E^k.
\]

\begin{remark}
 Of course, a natural choice of norms is
\[
\|a_{k}\|=\sup\{\|a_{k}(x_1,\ldots,x_{k})\|_\delta:\|x_1\|_E=\cdots=\|x_{k}\|_E=1\},
\]
but there are others. In the specific case $E=\mathbb{C}^\ell$ and $F=\mathbb{C}^d$,
we have a found that the norm defined in
\eqref{def:normsym} is useful and well adapted to analytic functions of complex variables.
\end{remark}

Once the norms on the spaces $\mathcal{S}_{k}(E,F)$ are fixed, we can define analytic functions
from $E$ to $F$. Throughout the rest of the appendix,
$E(\delta)=\{\|x\|_E\leq\delta\}$ will denote the closed ball in $E$ centered at the origin with radius $\delta$.
\begin{defi}\label{def:analytic}
Let $\delta>0$.
The space of analytic functions from $E$ to $F$ with radius of convergence $\delta$ is
the set of functions $A_\delta(E,F)$ given by
 \[
A_\delta(E,F)=\left\{f:E(\delta)\to F\left|f(x)=\sum_{k=0}^\infty a_{k}\left( x^{\otimes k}\right),
a_{k}\in \mathcal{S}_{k}(E,F), \|f\|_\delta<\infty\right.\right\},
\]
where $\|f\|_\delta:=\sum_{k=0}^\infty\|a_{k}\|_{\mathcal{S}_{k}(E,F)}\,\delta^k$. The a norm $\|\cdot\|_\delta$ makes
$A_\delta(E,F)$ into a Banach space. Given $\rho>0$, we will
denote $A_\delta^\rho(E,F)$ the closed ball centered at the origin with radius $\rho$.
\end{defi}

It is possible to check that if $g\in A_\delta^\rho(E,F)$ and
$f\in A_\rho(F,G)$, then $f\circ g\in A_\delta(E,G)$. In fact, it is
proved in \cite{Meyer75} that the function
$\Gamma:A_\delta^\rho(E,F)\times A_\rho(F,G)\to A_\delta(E,G)$ given by
$\Gamma(g,f)=f\circ g$ is $C^\infty$. In particular, this is true if we consider the Banach spaces of analytic functions
that are used in the text. Further details are given below for some specific situations that are relevant to our
problem. In subsection \ref{sec:comp},
we will show that an operator similar to $\Gamma$ is analytic.

\subsection{Analytic functions of complex variables}

We are interested in some concrete Banach spaces of analytic functions.
We will define specific norms for the following spaces of
$k-$linear functions:
\begin{itemize}
 \item $\mathcal{S}_{k}(\mathbb{C}^\ell,\mathbb{C}^d)$,
 \item $\mathcal{S}_{k}(E,F)$ where $E=A_\delta(\mathbb{C}^m,\mathbb{C}^\ell)$ and $F=A_\delta(\mathbb{C}^m,\mathbb{C}^d)$.
\end{itemize}
First, consider the Euclidean spaces $\mathbb{C}^r$ with uniform norm
\begin{equation}\label{eq:uniformnorm}
 \|\eta\|_\infty=\|(\eta_1,\ldots,\eta_r)\|_\infty=\max\{|\eta_1|,\ldots,|\eta_r|\},
\end{equation}
for each $\eta=(\eta_1,\ldots,\eta_r)\in\mathbb{C}^r$.
We notice that this norm satisfies
\begin{equation}\label{eq:desmult}
 |z^\alpha|\leq\left(\|z\|_\infty\right)^{|\alpha|},
\end{equation}
for all multi-indices $\alpha \in\mathbb{Z}^r_+$, and $z\in\mathbb{C}^r$.

 Let $\mathcal{S}_{k}(\mathbb{C}^\ell,\mathbb{C}^d)$ be the space
 of bounded symmetric $k-$linear functions from $\mathbb{C}^\ell$ to $\mathbb{C}^d$.
 Clearly, the spaces $\mathcal{S}_{k}(\mathbb{C}^\ell,\mathbb{C}^d)$ consist of
 polynomial functions in $\ell$ complex variables and $d$ coordinates.
 It is well known that $\mathcal{S}_{k}(\mathbb{C}^\ell,\mathbb{C}^d)$
 and the space $\mathcal{H}_{k}(\mathbb{C}^\ell,\mathbb{C}^d)$
 of homogeneous polynomials of degree $k$ are linearly isomorphic.

 In particular, given $q\in \mathcal{S}_{k}(\mathbb{C}^\ell,\mathbb{C}^d)$,
 the polynomial $ {q}\left( z^{\otimes k}\right)$ is an element of
 $\mathcal{H}_{k}(\mathbb{C}^\ell,\mathbb{C}^d)$.
 Conversely, for each homogeneous polynomial
 $g\in\mathcal{H}_{k}(\mathbb{C}^\ell,\mathbb{C}^d)$, there exists an
 unique $q\in \mathcal{S}_{k}(\mathbb{C}^\ell,\mathbb{C}^d)$ such
 that $ {q}\left( z^{\otimes k}\right)=g(z)$. Below, in the proof of
 Lemma \ref{lem:hhh}, we will describe how such $q$ can be found.

We need to provide each $\mathcal{S}_{k}(\mathbb{C}^\ell,\mathbb{C}^d)$
with a norm that satisfies condition \eqref{def:des}.
Consider ${q}\in \mathcal{S}_{k}(\mathbb{C}^\ell,\mathbb{C}^d)$. Clearly, the
homogeneous function $ {q}\left( z^{\otimes k}\right)$ is of the form
\begin{equation}\label{eq:homo}
 {q}\left( z^{\otimes k}\right)=\sum_{|\alpha|=k}z^\alpha\eta_\alpha,
\end{equation}
where $\eta_\alpha$ are constant vectors of coefficients in $\mathbb{C}^d$.
Using these coefficients, we define the norm of as
\begin{equation}\label{def:normsym}
\|{q}\|_{\mathcal{S}_{k}(\mathbb{C}^\ell,\mathbb{C}^d)}:=\sum_{|\alpha|=k}\|\eta_\alpha\|_\infty.
\end{equation}
If it is clear from the context, we will write the previous norm as
simply $\|{q}\|$.

\begin{lemm}\label{lem:hhh}
The norm on $\mathcal{S}_{k}(\mathbb{C}^\ell,\mathbb{C}^d)$ defined
in \eqref{def:normsym} satisfies
condition \eqref{def:des}.
\end{lemm}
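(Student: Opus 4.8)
The plan is to establish \eqref{def:des} by reconstructing the symmetric $k$-linear form $q$ explicitly from its diagonal restriction and then estimating it monomial by monomial, the point being that the norm \eqref{def:normsym} was chosen precisely so that the comparison constant is $1$. First I would record the polarization correspondence at the level of individual monomials. Given a multi-index $\alpha$ with $|\alpha|=k$, fix any tuple $(i_1,\dots,i_k)\in\{1,\dots,\ell\}^k$ in which the index $r$ occurs exactly $\alpha_r$ times, and define the scalar $k$-linear function
\[
m_\alpha(x_1,\dots,x_k)=\frac1{k!}\sum_{\sigma\in S_k}\prod_{j=1}^k (x_{\sigma(j)})_{i_j},
\]
where $S_k$ is the symmetric group on $k$ letters and $(x_m)_i$ denotes the $i$-th coordinate of $x_m\in\mathbb{C}^\ell$. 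A short combinatorial check --- counting how many of the orderings of the fixed multiset of indices yield a given monomial --- shows that $m_\alpha$ does not depend on the chosen realizing tuple, that it is symmetric, and that $m_\alpha(z^{\otimes k})=z^\alpha$. Hence, writing the given polynomial as $q(z^{\otimes k})=\sum_{|\alpha|=k}z^\alpha\eta_\alpha$ with $\eta_\alpha\in\mathbb{C}^d$, the (unique) element of $\mathcal{S}_k(\mathbb{C}^\ell,\mathbb{C}^d)$ with that diagonal restriction is $q(x_1,\dots,x_k)=\sum_{|\alpha|=k}m_\alpha(x_1,\dots,x_k)\,\eta_\alpha$; this also supplies the construction of $q$ from $g$ promised in the paragraph preceding the statement.

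The key inequality is the uniform bound $|m_\alpha(x_1,\dots,x_k)|\le\|x_1\|_\infty\cdots\|x_k\|_\infty$, valid for every $\alpha$ with $|\alpha|=k$. This is immediate from \eqref{eq:uniformnorm}: each summand satisfies $\bigl|\prod_{j}(x_{\sigma(j)})_{i_j}\bigr|\le\prod_j\|x_{\sigma(j)}\|_\infty=\prod_j\|x_j\|_\infty$, since $|(x_m)_i|\le\|x_m\|_\infty$ for every coordinate $i$ and $\sigma$ only permutes the $k$ factors; averaging over the $k!$ permutations preserves the bound.

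Assembling these two facts with the definition \eqref{def:normsym} of $\|q\|$, and using that $\|\cdot\|_\infty$ on $\mathbb{C}^d$ is a norm and each $m_\alpha(x_1,\dots,x_k)$ is a scalar, I get
\begin{align*}
\|q(x_1,\dots,x_k)\|_\infty
&\le\sum_{|\alpha|=k}|m_\alpha(x_1,\dots,x_k)|\,\|\eta_\alpha\|_\infty\\
&\le\Bigl(\sum_{|\alpha|=k}\|\eta_\alpha\|_\infty\Bigr)\|x_1\|_\infty\cdots\|x_k\|_\infty
=\|q\|\,\|x_1\|_\infty\cdots\|x_k\|_\infty,
\end{align*}
which is exactly \eqref{def:des}. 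I do not anticipate any real difficulty: the only step requiring a little care is the bookkeeping that $m_\alpha$ is well-defined and diagonalizes to $z^\alpha$, and even that is a one-line symmetry argument. It is worth noting in passing why one cannot shortcut this --- the naive comparison of the supremum norm of $q$ with that of $q(z^{\otimes k})$ carries the usual polarization constants, which grow with $k$, whereas the coefficient-sum norm \eqref{def:normsym} is engineered to make \eqref{def:des} hold with constant $1$, so the monomial-by-monomial route is the natural one.
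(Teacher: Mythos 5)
Your proof is correct and follows essentially the same route as the paper: the paper's $\kappa_\alpha$, defined by averaging $z_{\pi(1)}^{\beta_1}\cdots z_{\pi(k)}^{\beta_k}$ over permutations with unit multi-indices $\beta_1+\cdots+\beta_k=\alpha$, is exactly your $m_\alpha$ written with coordinate tuples, and the subsequent termwise bound and summation over $\alpha$ are identical.
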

\proof
Fixing $k>0$, we consider a multi-index $\alpha \in\mathbb{Z}^\ell_+$ with $|\alpha|=k$.
We will give an useful description of the unique multi-linear symmetric polynomial function
$\kappa_\alpha\in \mathcal{S}_{k}(\mathbb{C}^\ell,\mathbb{C})$ that satisfies $\kappa_\alpha(z^{\otimes k})=z^\alpha$.

Let $\Pi_{k}$ be the permutation group of $k$ elements.
Let $\beta_1,\ldots,\beta_{k}\in\mathbb{Z}^\ell_+$ be multi-indices such that $\beta_1+\cdots+\beta_{k}=\alpha$ and $|\beta_i|=1$.
Using these vectors, we define the function $\kappa_\alpha\in \mathcal{S}_{k}(\mathbb{C}^\ell,\mathbb{C})$ as
\begin{equation}\label{def:kappa}
\kappa_\alpha(z_1,\ldots,z_{k}):=\frac1{k!}\sum_{\pi\in\Pi_{k}}
z_{\pi(1)}^{\beta_1}z_{\pi(2)}^{\beta_2}\cdots z_{\pi(k)}^{\beta_{k}},
\end{equation}
where $z_1,\ldots,z_{k}\in\mathbb{C}^\ell$.
It is clear that $\kappa_\alpha$ is a well-defined multi-linear function and 
$\kappa_\alpha\left( z^{\otimes k}\right)=\kappa_\alpha(z,z,\ldots,z)=z^\alpha$.
In addition, using \eqref{eq:desmult}, we get that $\kappa_\alpha$ satisfies
\begin{align*}
 |\kappa_\alpha(z_1,\ldots,z_{k})|&\leq
 \frac1{k!}\sum_{\pi\in\Pi_{k}}|z_{\pi(1)}^{\beta_1}||z_{\pi(2)}^{\beta_2}|\cdots |z_{\pi(k)}^{\beta_{k}}|\\
 & \leq
 \frac1{k!}\sum_{\pi\in\Pi_{k}}\|z_{\pi(1)}\|_\infty\|z_{\pi(2)}\|_\infty\cdots \|z_{\pi(k)}\|_\infty =
 \|z_1\|_\infty\cdots \|z_{k}\|_\infty.
\end{align*}
From this, we conclude that if $q\in\mathcal{S}_{k}(\mathbb{C}^\ell,\mathbb{C}^d)$ and
$q$ satisfies \eqref{eq:homo} then
\[
{q}\left(z_1,\ldots,z_{k} \right)=\sum_{|\alpha|=k}\kappa_\alpha(z_1,\ldots,z_{k})\eta_\alpha,
\]
and therefore
\[
\|{q}\left(z_1,\ldots,z_{k}\right)\|_\infty\leq
\sum_{|\alpha|=k}|\kappa_\alpha(z_1,\ldots,z_{k})|\|\eta_\alpha\|_\infty
\leq \|{q}\|_{\mathcal{S}_{k}(\mathbb{C}^\ell,\mathbb{C}^d)}\, \|z_1\|_\infty\cdots \|z_{k}\|_\infty.
\]
The last inequality implies that the norm defined in
\eqref{def:normsym} satisfies \eqref{def:des}.
\qed

From the discussion above, we also conclude that if
$g\in A_\rho(\mathbb{C}^{\ell},\mathbb{C}^{d})$ then $g$ is of the
form
\begin{equation}\label{eqn:hash2}
g(z)=\sum_{k=0}^\infty
\sum_{|\alpha|=k}z^\alpha\eta_\alpha.
\end{equation}
Therefore, with the norms defined in \eqref{def:normsym} on the space of
$k-$symmetric functions $\mathcal{S}_{k}(\mathbb{C}^{\ell},\mathbb{C}^{d})$, we
get the following norm, for any $g\in A_\rho(\mathbb{C}^{\ell},\mathbb{C}^{d})$:
\begin{equation}\label{eqn:hashash}
 \|g\|_{\rho}=\sum_{k=0}^\infty
\left(\sum_{|\alpha|=k}\|\eta_\alpha\|_\infty\right)\rho^k.
\end{equation}
This norm has the following property. If $g\in A_{\rho}(\mathbb{C}^{\ell},\mathbb{C})$
and $\eta\in \mathbb{C}^{d}$ then $g\cdot \eta\in A_{\rho}(\mathbb{C}^{\ell},\mathbb{C}^d)$ and
\begin{equation}\label{eqn:prop}
 \|g\cdot \eta\|_{\rho}=\|g\|_{\rho}\|\eta\|_{\infty}.
\end{equation}
We also have the following.

\begin{lemm}
 If $g_1,g_2\in A_{\rho}(\mathbb{C}^{r},\mathbb{C})$, then
$g_1\,g_2\in A_{\rho}(\mathbb{C}^{r},\mathbb{C})$ and
\[\|g_1g_2\|_{\rho}\leq\|g_1\|_{\rho}\|g_2\|_{\rho}.\]
\end{lemm}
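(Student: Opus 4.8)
The plan is to write each factor as its power series, form the Cauchy product, and control everything with the rearrangement theorem for nonnegative‑term double series. First I would write $g_1(z)=\sum_{\alpha}a_\alpha z^\alpha$ and $g_2(z)=\sum_{\beta}b_\beta z^\beta$, where $\alpha,\beta$ range over all multi‑indices in $\mathbb{Z}_+^r$ and $a_\alpha,b_\beta\in\mathbb{C}$, so that by definition $\|g_1\|_\rho=\sum_\alpha|a_\alpha|\rho^{|\alpha|}$ and $\|g_2\|_\rho=\sum_\beta|b_\beta|\rho^{|\beta|}$ are both finite. The formal product is $\sum_\gamma c_\gamma z^\gamma$ with $c_\gamma=\sum_{\alpha+\beta=\gamma}a_\alpha b_\beta$, which is a finite sum for each $\gamma$, so each $c_\gamma$ is well defined.

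Next I would estimate $|c_\gamma|\le\sum_{\alpha+\beta=\gamma}|a_\alpha|\,|b_\beta|$ and compute, using that $|\gamma|=|\alpha|+|\beta|$ whenever $\alpha+\beta=\gamma$,
\[
\sum_\gamma |c_\gamma|\,\rho^{|\gamma|}
\le \sum_\gamma\sum_{\alpha+\beta=\gamma}|a_\alpha|\,|b_\beta|\,\rho^{|\alpha|+|\beta|}
= \Bigl(\sum_\alpha |a_\alpha|\,\rho^{|\alpha|}\Bigr)\Bigl(\sum_\beta |b_\beta|\,\rho^{|\beta|}\Bigr)
= \|g_1\|_\rho\,\|g_2\|_\rho ,
\]
where the middle equality is the legitimate rearrangement of a doubly‑indexed series with nonnegative terms. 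This already shows $\|g_1g_2\|_\rho<\infty$, hence $g_1g_2\in A_\rho(\mathbb{C}^r,\mathbb{C})$, and it gives the claimed inequality.

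The only point needing a word of care — routine rather than a genuine obstacle — is checking that the function $z\mapsto g_1(z)g_2(z)$ on $\mathbb{C}^r(\rho)$ really equals $\sum_\gamma c_\gamma z^\gamma$. This follows because the double series $\sum_{\alpha,\beta}a_\alpha b_\beta z^{\alpha+\beta}$ converges absolutely for $\|z\|_\infty\le\rho$, by the bound just established together with $|z^\alpha|\le\|z\|_\infty^{|\alpha|}$ from \eqref{eq:desmult}; hence it may be summed in any order, in particular first as the product of the two absolutely convergent single series (yielding $g_1(z)g_2(z)$) and then by collecting the terms with $\alpha+\beta=\gamma$ (yielding $\sum_\gamma c_\gamma z^\gamma$). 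One could alternatively phrase the scalar bookkeeping via \eqref{eqn:prop} and the triangle inequality, but the direct Cauchy‑product computation above is the cleanest route.
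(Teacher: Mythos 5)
Your proof is correct and follows essentially the same route as the paper's: expand both factors, form the Cauchy product, and bound the $\rho$-weighted sum of coefficients by the product of the two norms using nonnegativity to justify the rearrangement. The only difference is that you explicitly verify that the rearranged series equals the pointwise product on $\mathbb{C}^r(\rho)$, a step the paper performs implicitly.
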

\begin{proof}
 Let $g_1,g_2\in A_{\rho}(\mathbb{C}^{r},\mathbb{C})$. Then, they are of
 the form
 \[
 g_i(z)=\sum_{k=0}^\infty
\sum_{|\alpha|=k}z^\alpha\eta_\alpha^i,
 \]
 for $i=1,2$. This implies that
 \begin{align*}
 g_1(z)g_2(z)&=\sum_{\ell_1,\ell_2=0}^\infty
\sum_{|\alpha|=\ell_1}
\sum_{|\beta|=\ell_2}z^{\alpha+\beta}\eta_\alpha^1\eta_\beta^2\\
&=
\sum_{\ell=0}^\infty
\sum_{|\alpha|+|\beta|=\ell}z^{\alpha+\beta}\eta_\alpha^1\eta_\beta^2
=\sum_{\ell=0}^\infty\sum_{|\gamma|=\ell} z^{\gamma}
\left(
\sum_{\alpha+\beta=\gamma}\eta_\alpha^1\eta_\beta^2
\right),
 \end{align*}
 for all
 $z\in\mathbb{C}^{r}(\rho)$.
 Notice that the coefficients $\eta_\alpha^i$ are complex numbers
 and therefore they satisfy $\left|\eta_\alpha^1\,\eta_\beta^2
\right|= \left|\eta_\alpha^1\right|\left|\eta_\beta^2
\right|$.
 Using \eqref{eqn:hashash}, we get that
 \begin{align*}
 \|g_1\,g_2\|_\rho&=
 \sum_{\ell=0}^\infty\sum_{|\gamma|=\ell}
\left|
\sum_{\alpha+\beta=\gamma}\eta_\alpha^1\eta_\beta^2
\right| \rho^{\ell}
\leq \sum_{\ell=0}^\infty\sum_{|\gamma|=\ell}
\sum_{\alpha+\beta=\gamma}\left|\eta_\alpha^1\right|\left|\eta_\beta^2
\right| \rho^{\ell}\\ &=
\left(\sum_{\ell_1=0}^\infty
\sum_{|\alpha|=\ell_1}\left|\eta_\alpha^1\right|\rho^{\ell_1}\right)
\left(\sum_{\ell_2=0}^\infty
\sum_{|\beta|=\ell_1}\left|\eta_\beta^2\right|\rho^{\ell_2}\right)=
\|g_1\|_{\rho}\|g_2\|_{\rho}.
 \end{align*}
 This also shows that the series involved in $ \|g_1\,g_2\|_\rho$ converges.

\end{proof}

The previous result also shows that $A_{\rho}(\mathbb{C}^{r},\mathbb{C})$ is a Banach algebra.
Furthermore, if $g\in A_{\rho}(\mathbb{C}^{r},\mathbb{C}^{s})$ and
$\alpha \in\mathbb{Z}^s_+$ is
a multi-index, then $g^\alpha\in A_{\rho}(\mathbb{C}^{r},\mathbb{C})$ and
\begin{equation}\label{desaux}
\|g^\alpha\|_{\rho}\leq\|g\|_{\rho}^{|\alpha|}.
\end{equation}

\subsection{The composition operator}\label{sec:comp}

Let $E=A_\delta(\mathbb{C}^m,\mathbb{C}^\ell)$ and $F=A_\delta(\mathbb{C}^m,\mathbb{C}^d)$ be two Banach spaces of analytic
functions.
We will define norms on the corresponding spaces of $k-$symmetric linear and bounded functions.
For each $b_{k}\in\mathcal{S}_{k}\left(E, F\right)$
we define the norm
\begin{equation}\label{eq:more}
 \|b_{k}\|_{\mathcal{S}_{k}\left(E, F\right)}:=\sup\{\|b_{k}(g_1,\ldots,g_{k})\|_\delta:\|g_1\|_\delta=\cdots=\|g_{k}\|_\delta=1\}.
\end{equation}
Notice that this norm satisfies condition \eqref{def:des}.
With these norms, we can show that each analytic function in $A_{\rho}(\mathbb{C}^\ell,\mathbb{C}^d)$
can be extended to an analytic operator in $A_{\rho}(E, F)$, acting on spaces of analytic functions.

\begin{lemm}\label{lemabonito}
Let $f\in A_{\rho}(\mathbb{C}^\ell,\mathbb{C}^d)$ and $\delta>0$.
Let $E=A_\delta(\mathbb{C}^m,\mathbb{C}^\ell)$ and $F=A_\delta(\mathbb{C}^m,\mathbb{C}^d)$ be two Banach spaces of analytic
functions.
If $\mathcal{C}_f:E(\rho)\to F$ is the operator given by
$\mathcal{C}_f(g)=f\circ g$, then $\mathcal{C}_f$ is well defined and analytic.\footnote{This means that
$\mathcal{C}_f\in A_{\rho}(E, F)$.}
In addition, with the norms on the spaces
$\mathcal{S}_{k}(E,F)$ defined in \eqref{eq:more}, we have that
$\|\mathcal{C}_f\|_{A_{\rho}(E, F)}\leq\|f\|_\rho$.
\end{lemm}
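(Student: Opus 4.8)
The plan is to read off the power-series expansion of $\mathcal{C}_f$ directly from the homogeneous decomposition of $f$ and then estimate it termwise using the multiplicativity of the norms set up earlier in this appendix. Write $f(w)=\sum_{k=0}^\infty\sum_{|\alpha|=k}w^\alpha\eta_\alpha$ with $\eta_\alpha\in\mathbb{C}^d$, so that $\|f\|_\rho=\sum_k\bigl(\sum_{|\alpha|=k}\|\eta_\alpha\|_\infty\bigr)\rho^k<\infty$. First I would check well-definedness: if $g\in E(\rho)$, i.e. $\|g\|_\delta\le\rho$, then $\|g(z)\|_\infty\le\|g\|_\delta\le\rho$ for every $z\in\mathbb{C}^m(\delta)$, so $g(z)$ stays in the domain of $f$ and $f\circ g$ makes sense pointwise.

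Next I would construct the candidate symmetric multilinear maps. By Lemma~\ref{lem:hhh} and the discussion preceding it, the $k$-homogeneous part of $f$ is $q_k(w^{\otimes k})$ for a unique $q_k\in\mathcal{S}_k(\mathbb{C}^\ell,\mathbb{C}^d)$, and $q_k=\sum_{|\alpha|=k}\kappa_\alpha\,\eta_\alpha$ with the symmetrized monomials $\kappa_\alpha$ of \eqref{def:kappa}. Define $b_k\in\mathcal{S}_k(E,F)$ by $b_k(g_1,\dots,g_k)(z)=q_k(g_1(z),\dots,g_k(z))$; equivalently $b_k(g_1,\dots,g_k)=\sum_{|\alpha|=k}\kappa_\alpha(g_1,\dots,g_k)\,\eta_\alpha$, where $\kappa_\alpha(g_1,\dots,g_k)$ is now the corresponding symmetrized product of scalar components of the $g_i$, an element of $A_\delta(\mathbb{C}^m,\mathbb{C})$. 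This is visibly $k$-linear, symmetric and $F$-valued; the one thing to pin down is its norm.

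For the norm bound I would use that each scalar component of $g_i$ has $\|\cdot\|_\delta\le\|g_i\|_\delta$ (the $|\alpha|=1$ case of \eqref{desaux}) and that $A_\delta(\mathbb{C}^m,\mathbb{C})$ is a Banach algebra, so each permutation summand in $\kappa_\alpha(g_1,\dots,g_k)$ has $\|\cdot\|_\delta\le\prod_{i=1}^k\|g_i\|_\delta$; averaging over $\Pi_k$ gives $\|\kappa_\alpha(g_1,\dots,g_k)\|_\delta\le\prod_{i=1}^k\|g_i\|_\delta$. Combining this with \eqref{eqn:prop} and the triangle inequality yields
\[
\|b_k(g_1,\dots,g_k)\|_\delta\le\Bigl(\sum_{|\alpha|=k}\|\eta_\alpha\|_\infty\Bigr)\prod_{i=1}^k\|g_i\|_\delta,
\]
hence $\|b_k\|_{\mathcal{S}_k(E,F)}\le\sum_{|\alpha|=k}\|\eta_\alpha\|_\infty$ for the norm \eqref{eq:more}. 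Therefore $\sum_k\|b_k\|_{\mathcal{S}_k(E,F)}\rho^k\le\|f\|_\rho<\infty$, so by Definition~\ref{def:analytic} the series $g\mapsto\sum_k b_k(g^{\otimes k})$ converges in $F$ for $\|g\|_\delta\le\rho$ and defines an element of $A_\rho(E,F)$ of norm at most $\|f\|_\rho$.

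Finally I would identify this series with $\mathcal{C}_f$: for fixed $z\in\mathbb{C}^m(\delta)$, $b_k(g^{\otimes k})(z)=q_k(g(z)^{\otimes k})=\sum_{|\alpha|=k}g(z)^\alpha\eta_\alpha$, and summing over $k$ (legitimate since $\|g(z)\|_\infty\le\rho$) gives $f(g(z))$; thus $\sum_k b_k(g^{\otimes k})=\mathcal{C}_f(g)$ and $\|\mathcal{C}_f\|_{A_\rho(E,F)}\le\|f\|_\rho$. The only real obstacle is the bookkeeping in the third step — verifying that $b_k$ genuinely lies in $\mathcal{S}_k(E,F)$ with the asserted norm, which relies on the Banach-algebra structure of $A_\delta(\mathbb{C}^m,\mathbb{C})$ and the explicit symmetrizers $\kappa_\alpha$ from Lemma~\ref{lem:hhh}; once that estimate is secured, the remaining steps are routine assembly.
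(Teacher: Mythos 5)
Your proposal is correct and follows essentially the same route as the paper's proof: decompose $f$ into its homogeneous parts via the symmetrizers $\kappa_\alpha$ of Lemma~\ref{lem:hhh}, extend each symmetric multilinear map to $\mathcal{S}_k(E,F)$ by pointwise evaluation, and bound its norm by $\sum_{|\alpha|=k}\|\eta_\alpha\|_\infty$ using \eqref{desaux}, the Banach-algebra property of $A_\delta(\mathbb{C}^m,\mathbb{C})$, and \eqref{eqn:prop}. The only additions are your explicit checks of well-definedness and of the pointwise identification of the series with $f\circ g$, which the paper leaves implicit; these are harmless and correct.
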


\proof

We know that $f$ is of the form
\[
f(z)=\sum_{k=0}^\infty a_{k}\left( z^{\otimes k}\right),
\]
with \(
 a_{k}\in \mathcal{S}_{k}(\mathbb{C}^\ell,\mathbb{C}^d),
 \)
and
\[
\|f\|_{\rho}=\sum_{k=0}^\infty\|a_{k}\|_{\mathcal{S}_{k}(\mathbb{C}^\ell,\mathbb{C}^d)}\,{\rho}^k<\infty.
\]
Each multi-linear function $a_{k}\in \mathcal{S}_{k}(\mathbb{C}^\ell,\mathbb{C}^d)$
can be extended to a function $\tilde a_{k}\in \mathcal{S}_{k}\left(E, F\right)$ by
\[
 \left[\tilde a_{k}(g_1,\ldots,g_{k})\right](z):= a_{k}(g_1(z),\ldots,g_{k}(z)),
\]
for each $z\in\mathbb{C}^m$ such that $\|z\|_\infty\leq\delta$, and all
$g_1,\ldots,g_{k}\in E$.

As in the proof of Lemma~\ref{lem:hhh}, we can assume that each $a_k$ is of the form
\[
{a_k}\left(z_1,\ldots,z_{k} \right)=\sum_{|\alpha|=k}\kappa_\alpha(z_1,\ldots,z_{k})\eta_\alpha.
\]
In particular, each polynomial $\kappa_\alpha$ can be extended to a function
 $\tilde \kappa_\alpha\in \mathcal{S}_{k}\left(E, F\right)$. This implies that
 \[
\tilde a_{k}\left(g_1,\ldots,g_{k} \right)=\sum_{|\alpha|=k}\tilde \kappa_\alpha(g_1,\ldots,g_{k})\eta_\alpha.
\]
In addition, using \eqref{def:kappa} and
inequality \eqref{desaux}, we get that
\[
\|\tilde \kappa_\alpha(g_1,\ldots,g_{k})\|_\delta\leq \|g_1\|_\delta\cdots \|g_{k}\|_\delta.
\]
Using \eqref{eqn:prop},
\[
\|\tilde a_{k}\left(g_1,\ldots,g_{k} \right)\|_\delta\leq
\left(\sum_{|\alpha|=k}\|\eta_\alpha\|_\infty\right)\|g_1\|_\delta\cdots \|g_{k}\|_\delta.
\]
Therefore, for all $k\geq0$, the norm \eqref{eq:more} on $\mathcal{S}_{k}(E,F)$
is such that
\[
\|\tilde a_{k}\|_{\mathcal{S}_{k}\left(E, F\right)}\leq \|a_{k}\|_{\mathcal{S}_{k}(\mathbb{C}^\ell,\mathbb{C}^d)}.
\]
Finally, we notice that the operator $\mathcal{C}_f$ can be written as
\[
\mathcal{C}_f(g)=\sum_{k=0}^\infty\sum_{|\alpha|=k}\tilde a_{k}\left(g^{\otimes k}\right),
\]
and conclude that
\[\|\mathcal{C}_f\|_{A_{\rho}(E, F)}=
\sum_{k=0}^\infty
\|\tilde a_{k}\|_{\mathcal{S}_{k}\left(E, F\right)}\,\rho^k
\leq \sum_{k=0}^\infty
\|a_{k}\|_{\mathcal{S}_{k}(\mathbb{C}^\ell,\mathbb{C}^d)}\,{\rho}^k=\|f\|_\rho<\infty.
\]
\qed

\bibliographystyle{alpha}
\bibliography{tev0}
\clearpage

\end{document}